\documentclass[a4paper,12pt]{amsart}

\usepackage[top=3cm,left=2.5cm,right=2.5cm,bottom=3cm]{geometry}
\usepackage{relsize}
\usepackage{lineno}
\usepackage{hyperref}
\usepackage{amsmath,amsthm,pb-diagram,amssymb,comment}
\usepackage{amsfonts,graphicx,color}
\usepackage{enumitem, fancyhdr, dsfont}
\usepackage{thmtools,cleveref}
\usepackage[normalem]{ulem}
\usepackage{stmaryrd}
\usepackage{textcomp}
\usepackage{esvect}
\usepackage{contour}
\usepackage{mathbbol}
\usepackage{fontawesome5}
\usepackage{tikz,tikz-cd,float}
\usepackage{chemformula}
\usepackage{amsthm,thmtools,xcolor} %put color in the theorems
\newlist{steps}{enumerate}{1}
\setlist[steps, 1]{label = Step \arabic*:}
\usepackage{multicol}
\hypersetup{
    colorlinks=true,
    linkcolor=dodger,
    filecolor=dodger,
    urlcolor=dodger,
    citecolor=dodger,
}

\setlist{topsep=0ex,itemsep=1ex}

\newcommand{\ran}{\mathrm{ran}}

\newcommand{\thzfc}{\mathsf{ZFC}}

\newcommand{\Mbf}{\mathbf{M}}
\newcommand{\Lbf}{\mathbf{L}}

\newcommand{\Pwf}{\mathcal{P}}

\newcommand{\bfrak}{\mathfrak{b}}

\newcommand{\dfrak}{\mathfrak{d}}

\newcommand{\Ior}{\mathds{I}}

\newcommand{\la}{\langle}
\newcommand{\ra}{\rangle}

\newcommand{\Dbf}{\mathbf{D}}

\newcommand{\Rbf}{\mathbf{R}}

\newcommand{\col}{\mathsf{col}}
\newcommand{\id}{\mathrm{id}}

\newcommand{\leqT}{\preceq_{\mathrm{T}}}
\newcommand{\eqT}{\cong_{\mathrm{T}}}

\renewcommand{\colon}{\nobreak\mskip2mu\mathpunct{}\nonscript
  \mkern-\thinmuskip{:}\allowbreak\mskip6muplus1mu\relax}%added \allowbreak
\newcommand{\set}[2]{\{#1\colon\,#2\}}

\newcommand{\seq}[2]{\la #1\colon\,#2\ra}

\contourlength{0.8pt}
\newcommand{\bsp}{\allowbreak\ }

\contourlength{0.8pt}

\newcount\skewfactor
\def\mathunderaccent#1#2 {\let\theaccent#1\skewfactor#2
\mathpalette\putaccentunder}
\def\putaccentunder#1#2{\oalign{$#1#2$\crcr\hidewidth
\vbox to.2ex{\hbox{$#1\skew\skewfactor\theaccent{}$}\vss}\hidewidth}}

\definecolor{ao(english)}{rgb}{0.0, 0.5, 0.0}
	\definecolor{ultramarineblue}{rgb}{0.25, 0.4, 0.96}
\definecolor{cornellred}{rgb}{0.7, 0.11, 0.11}
\definecolor{cobalt}{rgb}{0.0, 0.28, 0.67}
\definecolor{bleudefrance}{rgb}{0.19, 0.55, 0.91}
\definecolor{darkblue}{rgb}{0.0, 0.0, 0.55}
\definecolor{ferrarired}{rgb}{1.0, 0.11, 0.0}
\definecolor{brandeisblue}{rgb}{0.0, 0.44, 1.0}
\definecolor{azure(colorwheel)}{rgb}{0.0, 0.5, 1.0}
\definecolor{aqua}{rgb}{0.0, 1.0, 1.0}
\definecolor{aguamarina}{cmyk}{0.85,0,0.33,0}
\definecolor{cafe}{cmyk}{0,0.81,1,0.60}
\definecolor{canela}{cmyk}{0.14,0.42,0.56,0}
\definecolor{darkgray}{cmyk}{0,0,0,0.50}
\definecolor{emerald}{cmyk}{0.91,0,0.88,0.12}
\definecolor{fresa}{cmyk}{0,1,0.50,0}
\definecolor{gold}{cmyk}{0,0.10,0.84,0}
\definecolor{lightgray}{cmyk}{0,0,0,0.30}
\definecolor{marron}{cmyk}{0,0.72,1,0.45}
\definecolor{melon}{cmyk}{0,0.29,0.84,0}
\definecolor{ladri}{cmyk}{0,0.77,0.87,0}
\definecolor{olive}{cmyk}{0.64,0,0.95,0.40}
\definecolor{orange}{cmyk}{0,0.42,1,0}
\definecolor{peach}{cmyk}{0,0.46,0.50,0}
\definecolor{pink}{cmyk}{0,0.10,0.10,0}
\definecolor{orange}{cmyk}{0,0.42,1,0}
\definecolor{pine}{cmyk}{0.92,0,0.59,0.25}
\definecolor{purple}{cmyk}{0.45,0.86,0,0}
\definecolor{violet}{cmyk}{0.07,0.90,0,0.34}
\definecolor{craneorange}{RGB}{252,187,6}
\definecolor{red(ncs)}{rgb}{0.77, 0.01, 0.2}
\definecolor{americanrose}{rgb}{1.0, 0.01, 0.24}
\definecolor{hanpurple}{rgb}{0.32, 0.09, 0.98}

\definecolor{aguamarina}{cmyk}{0.85,0,0.33,0}
\definecolor{cafe}{cmyk}{0,0.81,1,0.60}
\definecolor{canela}{cmyk}{0.14,0.42,0.56,0}
\definecolor{darkgray}{cmyk}{0,0,0,0.50}
\definecolor{emerald}{cmyk}{0.91,0,0.88,0.12}
\definecolor{fresa}{cmyk}{0,1,0.50,0}
\definecolor{gold}{cmyk}{0,0.10,0.84,0}
\definecolor{lightgray}{cmyk}{0,0,0,0.30}
\definecolor{marron}{cmyk}{0,0.72,1,0.45}
\definecolor{melon}{cmyk}{0,0.29,0.84,0}
\definecolor{ladri}{cmyk}{0,0.77,0.87,0}
\definecolor{olive}{cmyk}{0.64,0,0.95,0.40}
\definecolor{orange}{cmyk}{0,0.42,1,0}
\definecolor{peach}{cmyk}{0,0.46,0.50,0}
\definecolor{pink}{cmyk}{0,0.10,0.10,0}
\definecolor{orange}{cmyk}{0,0.42,1,0}
\definecolor{pine}{cmyk}{0.92,0,0.59,0.25}
\definecolor{purple}{cmyk}{0.45,0.86,0,0}
\definecolor{violet}{cmyk}{0.07,0.90,0,0.34}

\DeclareSymbolFont{extraup}{U}{zavm}{m}{n}
\DeclareMathSymbol{\varheart}{\mathalpha}{extraup}{86}
\DeclareMathSymbol{\vardiamond}{\mathalpha}{extraup}{87}

\definecolor{dodger}{rgb}{0.0,0.5,1.0}

\definecolor{amber}{rgb}{1.0,0.49,0.0}

\definecolor{ogreen}{RGB}{107,142,35}

\title[Revisiting $\bfrak$ and $\dfrak$ through Interval Structures]
{ Revisiting $\bfrak$ and $\dfrak$ through Interval Structures}

\author{Miguel A. Cardona}
\address{Faculty of Engineering, Instituci\'on Universitaria Pascual Bravo. Calle 73 No.~73A -- 226, Medell\'in, Colombia;
and\newline 
Einstein Institute of Mathematics, 
Edmond J. Safra Campus, Givat Ram\\
The Hebrew University of Jerusalem\\
Jerusalem, 91904, Israel}
\email{\href{mailto:miguel.cardona@pascualbravo.edu.co}{miguel.cardona@pascualbravo.edu.co}}
\urladdr{\url{https://sites.google.com/view/miacardonamo}}

\author{Adam Marton}
 \address{Department of Applied Mathematics and Business Informatics, Faculty of Economics, Technical University of Košice, B. Němcovej 32, 040 01 Košice, Slovakia}
\email{\href{adam.marton@tuke.sk}{adam.marton@tuke.sk}}

\thanks{The first would like to thank the grant No.~PCT160, Direcci\'on de Tecnolog\'ia e Investigaci\'on and Oficina de Internacionalizaci\'on, Instituci\'on Universitaria Pascual Bravo}

\subjclass[2020]{03E05, 03E15, 03E17, 03E35, 03E40}

\keywords{bounding number, dominating number, interval partition,  infinite partition}

\definecolor{sub0}{RGB}{29,32,137}
\definecolor{sub1}{RGB}{1,71,157}
\definecolor{sub2}{RGB}{1,104,183}
\definecolor{sub3}{RGB}{0,160,234}
\definecolor{sug}{RGB}{0,154,68}
\definecolor{suy}{RGB}{208,219,1}

\DeclareUnicodeCharacter{0301}{\'{e}}

\begin{document}

\makeatletter
\def\@roman#1{\romannumeral #1}
\makeatother

\newcounter{enuAlph}
\renewcommand{\theenuAlph}{\Alph{enuAlph}}

\numberwithin{equation}{section}
\renewcommand{\theequation}{\thesection.\arabic{equation}}

% \numberwithin{figure}{section}
% \renewcommand{\thefigure}{\thesection.\arabic{figure}}
% \numberwithin{table}{section}
% \theoremstyle{plain}
%   \newtheorem{theorem}[equation]{Theorem}
%   \newtheorem{corollary}[equation]{Corollary}
%   \newtheorem{lemma}[equation]{Lemma}
%   \newtheorem{mainlemma}[equation]{Main Lemma}
%   \newtheorem{fact}[equation]{Fact}
%   \newtheorem{prop}[equation]{lemma}
%   \newtheorem{claim}[equation]{Claim}
%   \newtheorem{question}[equation]{QUESTION}
%   \newtheorem{problem}[equation]{Problem}
%   \newtheorem{conjecture}[equation]{Conjecture}
%   \newtheorem*{theorem*}{Theorem}
%   \newtheorem*{mainthm*}{Main Theorem}
%   \newtheorem{teorema}[enuAlph]{Theorem}
%   \newtheorem*{corollary*}{Corollary}
% %\theoremstyle{colored}{definition}
%   \newtheorem{definition}[equation]{Definition}
%   \newtheorem{example}[equation]{Example}
%   \newtheorem{remark}[equation]{Remark}
%   \newtheorem{notation}[equation]{Notation}
%   \newtheorem{context}[equation]{Context}
%   \newtheorem{observation}[equation]{Observation}
%   \newtheorem{assumption}[equation]{Assumption}
%   \newtheorem*{definition*}{Definition}
%   \newtheorem*{acknowledgements*}{Acknowledgements}

\theoremstyle{plain}
  \newtheorem{theorem}[equation]{Theorem}%[section]
  \newtheorem{corollary}[equation]{Corollary}
  \newtheorem{lemma}[equation]{Lemma}
  \newtheorem{mainlemma}[equation]{Main Lemma}
  \newtheorem*{mainthm}{Main Theorem}
  \newtheorem{prop}[equation]{lemma}
  \newtheorem{clm}[equation]{Claim}
  \newtheorem{subclm}[equation]{Subclaim}
  \newtheorem{fact}[equation]{Fact}
  \newtheorem{exer}[equation]{Exercise}
  \newtheorem{question}[equation]{Question}
  \newtheorem{problem}[equation]{Problem}
  \newtheorem{conjecture}[equation]{Conjecture}
  \newtheorem{assumption}[equation]{Assumption}
    \newtheorem{hopethm}[equation]{Hopeful Theorem}
    \newtheorem{hopelem}[equation]{Hopeful Lemma}
    \newtheorem{challenging}[enuAlph]{Main challenging}
    \newtheorem{hopele}[equation]{Hopeful Lemma}
    \newtheorem{discussion}[equation]{Discussion}
  \newtheorem*{thm}{Theorem}
  \newtheorem{teorema}[enuAlph]{Theorem}
  \newtheorem*{corolario}{Corollary}
\theoremstyle{definition}
  \newtheorem{definition}[equation]{Definition}
  \newtheorem{example}[equation]{Example}
  \newtheorem{remark}[equation]{Remark}
  \newtheorem{notation}[equation]{Notation}
  \newtheorem{context}[equation]{Context}

  \newtheorem*{defi}{Definition}
  \newtheorem*{acknowledgements}{Acknowledgements}

\def\sectionautorefname{Section}
\def\subsectionautorefname{Subsection}

\begin{abstract}
We investigate a family of relational systems arising from interval partitions of\/ $\omega$, inspired by Vojtáš's characterization of the bounding and dominating numbers. By varying the underlying asymptotic quantifiers and interval constraints, we obtain several natural interval-type generalizations.

We show that the universal variants are remarkably robust: in all the discrete, colored, restricted, bounded, and measure-theoretic settings considered here, the associated bounding and dominating numbers coincide with the classical invariants $\bfrak$ and $\dfrak$. In contrast, the existential variants systematically reverse these invariants, yielding bounding number $\dfrak$ and dominating number $\bfrak$.
\end{abstract}

\maketitle

%\tableofcontents

\section{Introduction and preliminaries}\label{s0}

Cardinal characteristics of the continuum measure the complexity of combinatorial and structural properties of the real line and related spaces. Classical examples include the bounding and dominating numbers, $\bfrak$ and $\dfrak$, which play a central role in Cichoń’s diagram and in the interaction between set theory, topology, and measure and category.

In his seminal work \cite{vojtasTM}, Vojtáš provided a precise characterization of the classical bounding and dominating numbers in terms of partitions of $\omega$ into finite intervals induced by strictly increasing functions. More concretely, if $\omega^{\uparrow\omega}$ denotes the set of strictly increasing functions from $\omega$ to $\omega$, then the bounding and dominating numbers admit the following equivalent formulations:
% \adam{I deleted the ``:'' from the definitions so as to keep the notation consistent with the rest of the paper. Feel free to change it back, if you think it is better for readability.}
\begin{align*}
\bfrak&= \min \bigl\{ |\mathcal{F}| : \mathcal{F} \subseteq \omega^{\uparrow\omega},\; \forall X \in [\omega]^\omega \; \exists f \in \mathcal{F} \ \neg(\forall^\infty n\;\bigl|\,[\,f(n),f(n+1))\cap X\bigr|\le 1) \bigr\}, \\
\dfrak&= \min \bigl\{ |\mathcal{S}| : \mathcal{S} \subseteq [\omega]^\omega,\; \forall f \in \omega^{\uparrow\omega} \; \exists X \in \mathcal{S} \ \forall^\infty n\;\bigl|\,[\,f(n),f(n+1))\cap X\bigr|\le 1 \bigr\}.
\end{align*}
This viewpoint not only clarifies the combinatorial nature of these invariants but also opens the door to systematic generalizations by modifying the underlying relations. 

The framework of relational systems has since proven to be a powerful and flexible tool for studying cardinal characteristics (see~e.g.,\cite{Vojtas,vojtasamoeba,blass}. A \textit{relational system}  $\Rbf=\la X, Y, \sqsubset\ra$  consists of two non-empty sets $X$ and $Y$, together with a relation $\sqsubset$.
\begin{enumerate}[label=(\arabic*)]
    \item A set $F\subseteq X$ is \emph{$\Rbf$-bounded} if $\exists\, y\in Y\ \forall\, x\in F\ x \sqsubset y$. 
    \item A set $E\subseteq Y$ is \emph{$\Rbf$-dominating} if $\forall\, x\in X\ \exists\, y\in E\ x \sqsubset y$. 
\end{enumerate}
We associate two cardinal characteristics with the relational system $\Rbf$:
\begin{itemize}
    \item[{}] $\bfrak(\Rbf):=\min\{|F|\colon  F\subseteq X  \text{ is }\Rbf\text{-unbounded}\}$ the \emph{bounding number of $\Rbf$}, and
    
    \item[{}] $\dfrak(\Rbf):=\min\{|D|\colon  D\subseteq Y \text{ is } \Rbf\text{-dominating}\}$ the \emph{dominating number of $\Rbf$}.
\end{itemize}

The interval characterization introduced by Vojt\'a\v{s} fits naturally into the general framework of relational systems described above. From this perspective, the bounding and dominating numbers arise from a simple asymptotic relation measuring how infinite subsets of $\omega$ distribute inside the interval partitions generated by strictly increasing functions. More importantly, this viewpoint suggests that the specific relation considered by Vojt\'a\v{s} is not isolated, but rather part of a much broader interval paradigm. 

The main objective of this paper is to show that this phenomenon is remarkably robust: after varying the underlying asymptotic condition in several natural ways, combinatorial, geometric, colored, bounded, and measure-theoretic, the resulting relational systems continue to recover the classical invariants $\bfrak$ and $\dfrak$, often exhibiting unexpected duality phenomena between their universal and existential forms.

We introduce a hierarchy of interval relations parameterized by an integer $k\ge 1$. For $\iota\in\{\forall,\exists\}$, let $f\sqsubset_\iota^k X$ express that the intervals $[f(n),f(n+1))$ contain at most $k$ points of $X$ either eventually ($\iota=\forall$) or infinitely often ($\iota=\exists$). This yields the relational systems $\Rbf_\iota^k
=
\langle
\omega^{\uparrow\omega},
[\omega]^\omega,
\sqsubset_\iota^k
\rangle.$ Our first result reveals a sharp dichotomy between the universal and existential regimes.

\begin{teorema}[\autoref{lem:forall}, \autoref{a32}--\autoref{a61}]\label{mthmone}
For every $k\ge1$, $\bfrak(\Rbf_\forall^k)=\bfrak$, $\dfrak(\Rbf_\forall^k)=\dfrak$, whereas $\bfrak(\Rbf_\exists^k)=\dfrak$, $\dfrak(\Rbf_\exists^k)=\bfrak.$
\end{teorema}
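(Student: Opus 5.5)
The plan is to prove the four equalities through two translations. The first turns an infinite set $X\in[\omega]^\omega$ into a fast-growing function. The second turns a strictly increasing $f$ into an infinite set. Throughout I use the monotonicity ${\sqsubset_\iota^1}\subseteq{\sqsubset_\iota^k}$, and I use that the existential relation is the negation of a universal one: $f\not\sqsubset_\exists^k X$ holds exactly when $\forall^\infty n\ |[f(n),f(n+1))\cap X|\ge k+1$. I also use the standard duality $\bfrak(\Rbf)=\dfrak(\Rbf^\perp)$ and $\dfrak(\Rbf)=\bfrak(\Rbf^\perp)$, where $\Rbf^\perp=\la Y,X,\not\sqsupset\ra$, together with Blass's interval-partition characterizations of $\bfrak$ and $\dfrak$.

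\emph{From sets to functions.} For $X=\{x_0<x_1<\cdots\}$, let $\varphi_X(m)$ be the $(k+1)$-st element of $X$ above $m$, and put $\psi_X(n)=\varphi_X^{\,n}(n)$. Suppose $f\sqsubset_\forall^k X$, say from stage $N$ on. Then each interval $[f(n),f(n+1))$ with $n\ge N$ contains at most $k$ points of $X$, so $f(n+1)\le\varphi_X(f(n))$. Iterating gives $f(n)\le\varphi_X^{\,n-N}(f(N))\le\psi_X(n)$ once $n\ge f(N)$, hence $f\le^*\psi_X$. Two consequences follow. A $\le^*$-unbounded family of size $\bfrak$ is $\Rbf_\forall^k$-unbounded, so $\bfrak(\Rbf_\forall^k)\le\bfrak$. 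If $\mathcal S$ is $\Rbf_\forall^k$-dominating, then $\{\psi_X: X\in\mathcal S\}$ is $\le^*$-dominating, so $\dfrak\le\dfrak(\Rbf_\forall^k)$.

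\emph{From functions to sets.} For $f\in\omega^{\uparrow\omega}$, let $h_f(m)=\min\{f(n): f(n)>m\}$, the right endpoint of the $f$-interval containing $m$. Given $g$, let $X_g=\{x_0<x_1<\cdots\}$ with $x_{i+1}=g(x_i)+1$. If $h_f\le^* g$, then eventually consecutive points of $X_g$ lie in different $f$-intervals, so $f\sqsubset_\forall^1 X_g$. Hence any $F$ with $|F|<\bfrak$ has a single $X_g$ above it, which gives $\bfrak(\Rbf_\forall^k)\ge\bfrak(\Rbf_\forall^1)\ge\bfrak$. Likewise $\{X_g: g\in D\}$ with $D$ dominating is $\Rbf_\forall^k$-dominating, so $\dfrak(\Rbf_\forall^k)\le\dfrak$. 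This settles the universal case.

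\emph{Existential case, the easy inequalities.} Given $g$, define $f_g(0)=0$ and $f_g(n+1)=\max\{g(f_g(n)),f_g(n)\}+1$. If $\varphi_X\le^* g$, then eventually every $f_g$-interval contains at least $k+1$ points of $X$, i.e.\ $f_g\not\sqsubset_\exists^k X$. So for a dominating family $D$, the family $\{f_g: g\in D\}$ is $\Rbf_\exists^k$-unbounded, giving $\bfrak(\Rbf_\exists^k)\le\dfrak$. Similarly, let $\mathcal S$ be $\Rbf_\exists^k$-dominating. For each $g$ there is $X\in\mathcal S$ with $f_g\sqsubset_\exists^k X$, and this forces $\varphi_X\not\le^* g$. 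So $\{\varphi_X: X\in\mathcal S\}$ is $\le^*$-unbounded and $\bfrak\le\dfrak(\Rbf_\exists^k)$.

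\emph{Existential case, the main obstacle.} The reverse inequalities need Blass's characterizations:
\begin{itemize}
\item $\dfrak$ is the least size of a family of interval partitions dominating every partition, where $I$ dominates $J$ if almost every $I$-interval contains a $J$-interval;
\item $\bfrak$ is the least size of a family not dominated by a single partition.
\end{itemize}
First, let $|F|<\dfrak$. Then there is a partition $J=\seq{J_m}{m<\omega}$ not dominated by any $f$-partition. Thus for each $f\in F$, infinitely many $J_m$ contain some whole $f$-interval. Let $X$ be the set of left endpoints of the $J_m$. An $f$-interval contained in some $J_m$ meets $X$ in at most one point, so $f\sqsubset_\exists^1 X$. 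This gives $\bfrak(\Rbf_\exists^k)\ge\dfrak$. Second, take a family $\{J^\alpha:\alpha<\bfrak\}$ not dominated by any single partition, and let $X_\alpha$ be the set of left endpoints of $J^\alpha$. Any $f$ fails to dominate some $J^\alpha$, so infinitely many $f$-intervals lie inside $J^\alpha$-intervals, i.e.\ $f\sqsubset_\exists^1 X_\alpha$. This gives $\dfrak(\Rbf_\exists^k)\le\bfrak$.

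The delicate point is aligning the direction of ``domination'' in Blass's characterizations with the quantifier ``infinitely many $f$-intervals inside a $J$-interval''. The non-domination statement must be chosen with care, and at worst a harmless shift of the partitions is needed.
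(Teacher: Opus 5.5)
\textbf{The gap.} Your argument is sound except at the step you yourself flag, and there the inference as written is false. With your definition ($I$ dominates $J$ iff almost every $I$-interval contains a $J$-interval), ``$f$ does not dominate $J$'' says only that infinitely many $f$-intervals contain no whole $J$-interval. It does not give that infinitely many $f$-intervals lie inside $J$-intervals, nor that infinitely many $J_m$ contain an $f$-interval. For example, take $J_m=[2m,2m+2)$ and $f(n)=2n+1$. Then $f$ does not dominate $J$, yet no interval of either partition lies inside an interval of the other. So the sentences ``Thus for each $f\in F$, infinitely many $J_m$ contain some whole $f$-interval'' and ``so infinitely many $f$-intervals lie inside $J^\alpha$-intervals'' are unjustified.

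The repair needs no shift. Suppose an interval $I$ contains two left endpoints $j_m<j_{m'}$ of $J$. Then $J_m=[j_m,j_{m+1})\subseteq[j_m,j_{m'})\subseteq I$. Hence every $f$-interval containing no whole $J$-interval meets $X$ in at most one point, and non-domination gives $f\sqsubset_\exists^1 X$ directly. In the example, each $[2n+1,2n+3)$ meets the even numbers exactly once. If you want your stronger intermediate claim, the right ``shift'' is a coarsening. Apply non-domination to $n\mapsto f(2n)$; then one half of each of infinitely many merged intervals misses $X$, so it sits inside a single $J$-interval. With either repair, $\bfrak(\Rbf_\exists^k)\ge\dfrak$ and $\dfrak(\Rbf_\exists^k)\le\bfrak$ go through.

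\textbf{Comparison with the paper.} The rest of your proposal is correct, provided you replace the merely nondecreasing $h_f$ and $\varphi_X$ by strictly increasing majorants. It differs from the paper mainly in where the black boxes sit.
\begin{itemize}
\item Universal case: the paper cites Vojt\'a\v{s} for $k=1$ and proves only $\Dbf\leqT\Rbf_\forall^k$, using the sharper bound $X\mapsto(n\mapsto x_{(k+1)n})$ instead of your iterate $\psi_X$. Your $X_g$ construction reproves Vojt\'a\v{s}'s half.
\item Existential, easy direction: your direct connection $\Dbf^\perp\leqT\Rbf_\exists^k$ (via $g\mapsto f_g$, $X\mapsto\varphi_X$) replaces the paper's $\Ior^\perp\leqT\Rbf_\exists^k$, which relies on Blass.
\item Existential, hard direction: here the paper avoids Blass via $\Rbf_\exists^1\leqT\Dbf^\perp$ with $b\mapsto b[\omega]$. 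If eventually every $f$-interval contained two points of $b[\omega]$, the index of the first $b$-value in the $n$-th interval would grow by at least $2$ per step, forcing $b\le^* f$.
\end{itemize}
Using that last argument in place of your Blass step would give a proof of the whole theorem with no black boxes at all.
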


Thus, merely replacing ``eventually'' by ``infinitely often'' completely reverses the associated cardinal invariants. In particular, the interval paradigm naturally encodes both sides of the classical $(\bfrak,\dfrak)$-duality.

We show that this phenomenon persists under several substantial modifications of the underlying combinatorics. First, instead of counting points of a set inside an interval, we count the number of blocks of an infinite partition intersected by the interval. This leads to the colored systems $\Rbf_\iota^{\col,k}=
\langle
\omega^{\uparrow\omega},
\text{Part}(\omega),
\sqsubset_\iota^{\col,k}
\rangle$ for $\iota\in\{\forall,\exists\}$
where $\text{Part}(\omega)$ denotes the family of infinite partitions of $\omega$ into finite sets.

\begin{teorema}[\autoref{a65}--\autoref{a66}]\label{mthmthird}
For every $k\ge2$, $\bfrak(\Rbf_\forall^{\col,k})=\bfrak$, $\dfrak(\Rbf_\forall^{\col,k})=\dfrak$, and $\bfrak(\Rbf_\exists^{\col,k})=\dfrak$, $\dfrak(\Rbf_\exists^{\col,k})=\bfrak$.
\end{teorema}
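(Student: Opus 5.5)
The plan is to squeeze the colored systems between the point-counting systems of Theorem~\ref{mthmone}, using two Tukey connections. I read $f\sqsubset_\iota^{\col,k}P$ as saying that the intervals $[f(n),f(n+1))$ meet at most $k$ blocks of $P$, eventually if $\iota=\forall$ and infinitely often if $\iota=\exists$. Both translations below act on a single interval at a time, so each preserves the quantifier $\forall^\infty n$ as well as $\exists^\infty n$. Hence the universal and existential cases can be treated simultaneously.

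\emph{From partitions to sets.} For $P\in\text{Part}(\omega)$, let $X_P$ be a selector choosing exactly one point from each block of $P$. Since $P$ has infinitely many blocks, $X_P\in[\omega]^\omega$. If an interval meets at most $k$ blocks of $P$, it contains at most $k$ points of $X_P$. Therefore $f\sqsubset_\iota^{\col,k}P$ implies $f\sqsubset_\iota^{k}X_P$. Thus $(\id,\,P\mapsto X_P)$ is a Tukey connection $\Rbf_\iota^{\col,k}\leqT\Rbf_\iota^{k}$. This gives $\bfrak(\Rbf_\iota^{\col,k})\le\bfrak(\Rbf_\iota^{k})$ and $\dfrak(\Rbf_\iota^{k})\le\dfrak(\Rbf_\iota^{\col,k})$.

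\emph{From sets to partitions.} For $X=\{x_0<x_1<\cdots\}\in[\omega]^\omega$, let $P_X$ be the interval partition with blocks $[0,x_0)$ (when nonempty) and $[x_i,x_{i+1})$ for $i\in\omega$. Every block is finite and there are infinitely many blocks. A new block of $P_X$ begins exactly at each point of $X$. Hence an interval $I$ meets at most $|I\cap X|+1$ blocks of $P_X$. So if $|I\cap X|\le k-1$, then $I$ meets at most $k$ blocks. Consequently $f\sqsubset_\iota^{k-1}X$ implies $f\sqsubset_\iota^{\col,k}P_X$, and $(\id,\,X\mapsto P_X)$ is a Tukey connection $\Rbf_\iota^{k-1}\leqT\Rbf_\iota^{\col,k}$. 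This is exactly where the hypothesis $k\ge2$ enters: the index $k-1$ must be at least $1$ for Theorem~\ref{mthmone} to apply. We obtain $\bfrak(\Rbf_\iota^{k-1})\le\bfrak(\Rbf_\iota^{\col,k})$ and $\dfrak(\Rbf_\iota^{\col,k})\le\dfrak(\Rbf_\iota^{k-1})$.

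\emph{Conclusion.} By Theorem~\ref{mthmone}, both $\Rbf_\iota^{k-1}$ and $\Rbf_\iota^{k}$ have the same invariants: $(\bfrak,\dfrak)$ when $\iota=\forall$ and $(\dfrak,\bfrak)$ when $\iota=\exists$. The two sandwiches therefore collapse to the claimed values. I do not expect a serious obstacle. The only delicate point is to fix the exact meaning of ``meets at most $k$ blocks'' and to verify the $+1$ count in the second translation, namely that an interval starting strictly inside a block gains one extra block. A less careful choice of $P_X$, for instance blocks ending rather than starting at points of $X$, should be checked to give the same off-by-one bound.
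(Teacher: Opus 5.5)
Your proof is correct. Half of it coincides with the paper: your selector map $P\mapsto X_P$ is the paper's Lemma~\ref{lem2:partbd}, which uses $\min P_n$ as the selector. The other half takes a different route.

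\textbf{The paper's route.} For the remaining bounds the paper does not return to the point-counting systems. It proves $\Rbf_\forall^{\col,2}\leqT\Ior$ and $\Rbf_\exists^{\col,2}\leqT\Ior^\perp$ by partitioning $\omega$ into the intervals of $g$. The key step (using Fact~\ref{Nintervals} in the existential case) is that an $f$-interval meeting three or more $g$-blocks must contain a whole $g$-block. That is incompatible with the relevant interval-inclusion hypothesis. The paper then invokes Blass's theorem $\bfrak(\Ior)=\bfrak$, $\dfrak(\Ior)=\dfrak$, and passes from $k=2$ to all $k\ge2$ by monotonicity in $k$.

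\textbf{Your route.} You observe that an interval $I$ meets at most $|I\cap X|+1$ blocks of $P_X$. This one-line count replaces all of the above. It treats $\forall$ and $\exists$ uniformly and yields the explicit sandwich $\Rbf_\iota^{k}\leqT\Rbf_\iota^{\col,k}\leqT\Rbf_\iota^{k-1}$.

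\textbf{Trade-off.} You rely on Theorem~\ref{mthmone} at two indices, and its existential half is itself proved in the paper through $\Ior^\perp$. The paper's lemmas instead give a direct link between the colored systems and Blass's interval relation. Your route does give a small bonus. In the existential case, Corollary~\ref{a61} covers index $0$, so your argument also yields $\bfrak(\Rbf_\exists^{\col,1})=\dfrak$ and $\dfrak(\Rbf_\exists^{\col,1})=\bfrak$. The restriction $k\ge2$ is really needed only for $\forall$.

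\textbf{A notational slip.} Under the paper's convention, $\Psi_-\colon X\to X'$ and $\Psi_+\colon Y'\to Y$ witness $\Rbf\leqT\Rbf'$. With that convention:
\begin{itemize}
\item your first pair witnesses $\Rbf_\iota^{k}\leqT\Rbf_\iota^{\col,k}$, not $\Rbf_\iota^{\col,k}\leqT\Rbf_\iota^{k}$;
\item your second pair witnesses $\Rbf_\iota^{\col,k}\leqT\Rbf_\iota^{k-1}$, not the reverse.
\end{itemize}
The cardinal inequalities you actually draw from each connection are the correct ones, so the conclusion is unaffected.
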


Next, we impose geometric restrictions on the interval decomposition itself. Let
\[
\Dbf_k
=
\{
f\in\omega^{\uparrow\omega}
:
|[f(n),f(n+1))|>k
\text{ for all }n
\},
\]
and consider
\[
\Lbf_\iota^k
=
\langle
\Dbf_k,
[\omega]^*,
\sqsubset_\iota^k
\rangle,
\]
where $[\omega]^*$ is the family of infinite co-infinite subsets of $\omega$.

\begin{teorema}[\autoref{a80}--\autoref{a81}]\label{mthmfive}
For every $k>0$, $\bfrak(\Lbf_\forall^k)=\bfrak$, $\dfrak(\Lbf_\forall^k)=\dfrak$ while $\bfrak(\Lbf_\exists^k)=\dfrak$, $\dfrak(\Lbf_\exists^k)=\bfrak$ for any $k<\omega$. 
\end{teorema}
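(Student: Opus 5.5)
Throughout I take Theorem~\ref{mthmone} as given, i.e.\ the four equalities for $\Rbf_\iota^k$, and compare each $\Lbf_\iota^k$ with $\Rbf_\iota^k$. Two differences have to be handled. First, the first coordinate ranges only over $\Dbf_k$, the partitions whose intervals have length $>k$. Second, the second coordinate ranges only over $[\omega]^*$, the infinite co-infinite sets.

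Both restrictions are harmless, by two translation maps.

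\textbf{Translating functions.} For $f\in\omega^{\uparrow\omega}$ let $\hat f(n)=f((k+1)n)$ (or $f(n)+(k+1)n$ after re-indexing). Then $\hat f\in\Dbf_k$, since each of its intervals is the union of $k+1$ consecutive $f$-intervals and so has length $\ge k+1$.

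\textbf{Translating sets.} For $X\in[\omega]^\omega$, let $X'$ be obtained by deleting every other element of $X$. Then $X'$ is infinite and co-infinite, and $X'\subseteq X$. Conversely, each $X\in[\omega]^*$ is already in $[\omega]^\omega$.

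\textbf{Monotonicity facts.} Shrinking $X$ preserves $f\sqsubset_\iota^k X$, since the interval counts only decrease. Coarsening $f$ to $\hat f$ does not preserve $\sqsubset_\iota^k$ directly. To fix this, I will use a thinning of $X$ adapted to the relation rather than a fixed amalgamation. Alternatively, the cleanest route is to redo the proofs of Theorem~\ref{mthmone} and check that the witnesses they produce can be taken in $\Dbf_k$ and in $[\omega]^*$.

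\textbf{Universal case.} For $\bfrak(\Lbf_\forall^k)\ge\bfrak$: given $F\subseteq\Dbf_k$ with $|F|<\bfrak$, the set $F$ is $\Rbf_\forall^k$-bounded by some $X$, and $X'$ bounds it in $\Lbf_\forall^k$. Hence $\bfrak(\Lbf_\forall^k)\ge\bfrak(\Rbf_\forall^k)=\bfrak$.

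For the upper bound, take an unbounded family $\{g_\alpha\}\subseteq\omega^{\uparrow\omega}$ of size $\bfrak$ and let $f_\alpha$ be interval partitions in $\Dbf_k$ built so that each $f_\alpha$-interval $[f_\alpha(n),f_\alpha(n+1))$ reaches beyond $g_\alpha(f_\alpha(n))$ (always possible while keeping the length $>k$). Suppose a single $X$ had at most $k$ points in almost every interval of every $f_\alpha$. Define $h(m)$ to be the $(k+1)$-st element of $X$ above $m$, iterated twice. Then $h$ would dominate every $g_\alpha$, a contradiction. This is the standard Vojtáš argument, and it never used that intervals are short.

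Dually, $\dfrak(\Lbf_\forall^k)\le\dfrak$: for each $g$ in a dominating family, take $X_g$ sparse with respect to $g$, i.e.\ $x_{i+1}>g(g(x_i))$. Such an $X_g$ is automatically co-infinite, and every $f$ with $f\le^* g$ (in the appropriate sense) has at most $1\le k$ point of $X_g$ per interval eventually. The lower bound $\dfrak(\Lbf_\forall^k)\ge\dfrak$ comes from extracting, from a dominating $\mathcal S\subseteq[\omega]^*$, the enumeration functions of its sets as a dominating family. Since $\Dbf_k$ is cofinal under the translation $f\mapsto\hat f$, restricting to $\Dbf_k$ does not weaken this.

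\textbf{Existential case.} For $\dfrak(\Lbf_\exists^k)\le\bfrak$: take an unbounded family $\{g_\alpha\}$ and sparse sets $X_\alpha$ as above. For $f\in\Dbf_k$, choose $\alpha$ with $g_\alpha\not\le^*$ the function $m\mapsto f(\text{next break after }m)$. Then infinitely often an $f$-interval contains at most one point of $X_\alpha$. For $\bfrak(\Lbf_\exists^k)\ge\dfrak$: given fewer than $\dfrak$ partitions in $\Dbf_k$, some $h$ is not dominated by the functions they induce. Build a co-infinite $X$ with at most $k$ points in infinitely many intervals of each partition by placing points sparsely according to $h$.

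The remaining two bounds, $\bfrak(\Lbf_\exists^k)\le\dfrak$ and $\dfrak(\Lbf_\exists^k)\ge\bfrak$, come from a dominating (respectively unbounded) family translated through $f\mapsto\hat f$ and $X\mapsto X'$.

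\textbf{Main obstacle.} I expect the main difficulty in the existential bounds that require every $\Lbf_\exists^k$-relation to fail, namely $\bfrak(\Lbf_\exists^k)\le\dfrak$. Given a dominating family, I need $\dfrak$ partitions in $\Dbf_k$ such that every co-infinite $X$ has more than $k$ points in almost all intervals of one of them. Here the length constraint $|[f(n),f(n+1))|>k$ is essential, since it makes room for $k+1$ points. I would let $f_g$ have intervals long enough to contain $k+1$ consecutive elements of any set whose enumeration is dominated by $g$. Concretely, $f_g(n+1)=g^{(k+1)}(f_g(n))+k+1$, where $g^{(k+1)}$ is the $(k+1)$-fold iterate of $g$, and I would choose $g$ to dominate the gap function $m\mapsto\min(X\setminus(m+1))$ of $X$.
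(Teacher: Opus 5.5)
Your proof is correct: all eight inequalities are covered, and the direct constructions go through. Two readings are needed: take $f(\text{next break after } m)$ to be a function that is $\ge f(n+2)$ on $[f(n),f(n+1))$, and reindex the enumeration functions as $n\mapsto x_{(k+1)n}$, as in \autoref{lem:forall}.

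It is, however, much more roundabout than necessary, because the obstacle you flag is not one. For $\Rbf^k_\iota\leqT\Lbf^k_\iota$ you never need $f\sqsubset^k_\iota X\Rightarrow\hat f\sqsubset^k_\iota X$. You only need the converse $\hat f\sqsubset^k_\iota X\Rightarrow f\sqsubset^k_\iota X$, and that holds for both quantifiers: every $f$-interval lies inside some $\hat f$-interval, and distinct $\hat f$-intervals contain distinct $f$-intervals. So $(f\mapsto\hat f,\ \text{inclusion})$ witnesses $\Rbf^k_\iota\leqT\Lbf^k_\iota$, and your $(\text{inclusion},\ X\mapsto X')$ witnesses $\Lbf^k_\iota\leqT\Rbf^k_\iota$. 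Hence $\Lbf^k_\iota\eqT\Rbf^k_\iota$, and the theorem follows at once from \autoref{mthmone}.

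This also handles $k=0$ in the existential clause, where $\Dbf_0=\omega^{\uparrow\omega}$ and \autoref{a61} applies; your ``at most one point'' arguments do not cover that case as written. Likewise, the length restriction is not ``essential'' in your last step. It only limits which partitions you may use, and your $f_g$ satisfy it automatically.

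The paper divides the work differently. For the universal case it gives explicit connections $\Lbf^k_\forall\leqT\Dbf$ and $\Dbf\leqT\Lbf^k_\forall$ (\autoref{lem:forleq}, \autoref{lem:forgeq}). These are essentially your sparse-set and iterated-$(k+1)$-st-point constructions, and they yield $\Lbf^k_\forall\eqT\Dbf$ without invoking Vojt\'a\v{s}'s theorem. For the existential case it uses the chain $\Lbf^k_\exists\leqT\Lbf^0_\exists\leqT\Lbf^1_\exists\eqT\Rbf^1_\exists$, together with $\Rbf^k_\iota\leqT\Lbf^k_\iota$ from \autoref{fac:basL}, whose natural proof is exactly the coarsening $f\mapsto\hat f$.

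Your sparse co-infinite sets instead give a direct $\Lbf^k_\exists\leqT\Dbf^\perp$. This is a variant of \autoref{a31} in which $b[\omega]$, which may be cofinite, is replaced by a sparse set, and it is a valid substitute for the paper's chain. In short, the paper's route is self-contained relative to $\Dbf$, while the two translations you already wrote down give the shortest proof once \autoref{mthmone} is available.
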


The interval method also admits a measure-theoretic counterpart. Replacing finite counting by Lebesgue measure, we define relational systems $\Mbf_\iota^\varepsilon$ for $\varepsilon>0$
by requiring the intersections
$
[f(n),f(n+1))\cap Y
$
to have measure at most $\varepsilon$ eventually or infinitely often. Despite their analytic appearance, these systems are Tukey equivalent to the original combinatorial relations.

\begin{teorema}[\autoref{a83}--\autoref{a116}]\label{mthmsix}
For every $\varepsilon>0$ and $k>0$, $\Mbf_\iota^\varepsilon\eqT\Rbf_\iota^k$ for $\iota\in\{\forall,\exists\}$. Consequently, $\bfrak(\Mbf_\forall^\varepsilon)=\bfrak$, $\dfrak(\Mbf_\forall^\varepsilon)=\dfrak$ and $\bfrak(\Mbf_\exists^\varepsilon)=\dfrak$, $\dfrak(\Mbf_\exists^\varepsilon)=\bfrak$.
\end{teorema}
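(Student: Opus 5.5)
The plan is to prove the Tukey equivalence by sandwiching $\Mbf_\iota^\varepsilon$ between $\Rbf_\iota^1$ and $\Rbf_\iota^k$:
\[
\Rbf_\iota^1\leqT\Mbf_\iota^\varepsilon\leqT\Rbf_\iota^k .
\]
I use the convention that $\Rbf\leqT\Rbf'$ means there are maps $\varphi\colon X\to X'$ and $\psi\colon Y'\to Y$ such that $\varphi(x)\sqsubset' y$ implies $x\sqsubset\psi(y)$. In both reductions $\varphi$ will be the identity on $\omega^{\uparrow\omega}$, so all the work is in choosing $\psi$. I read $\Mbf_\iota^\varepsilon$ as having the real intervals $[f(n),f(n+1))$ on the left and, on the right, measurable $Y\subseteq[0,\infty)$ of infinite Lebesgue measure; I will adjust the argument if the paper's precise definition differs. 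The chain is then closed using $\Rbf_\iota^1\eqT\Rbf_\iota^k$, which I take from the proofs of \autoref{lem:forall} and \autoref{a32}--\autoref{a61}. The identity already gives one direction, since $\sqsubset_\iota^1\subseteq\sqsubset_\iota^k$. The cardinal equalities then follow from \autoref{mthmone}.

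\textbf{Step 1: $\Mbf_\iota^\varepsilon\leqT\Rbf_\iota^k$.} Given $X\in[\omega]^\omega$, fix $\delta=\varepsilon/k$ (shrunk to be at most $1$) and let
\[
\psi(X)=\bigcup_{x\in X}[x,x+\delta).
\]
This set has infinite measure. Each piece $[x,x+\delta)$ lies in $[x,x+1)$, so it lies entirely inside whichever integer interval $[f(n),f(n+1))$ contains $x$. Hence
\[
\lambda\bigl([f(n),f(n+1))\cap\psi(X)\bigr)=\delta\cdot\bigl|[f(n),f(n+1))\cap X\bigr|.
\]
This is at most $\varepsilon$ whenever the interval meets $X$ in at most $k$ points. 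So $f\sqsubset_\iota^k X$ implies $f\sqsubset_\iota^\varepsilon\psi(X)$, for both $\iota=\forall$ and $\iota=\exists$, because the implication holds index by index.

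\textbf{Step 2: $\Rbf_\iota^1\leqT\Mbf_\iota^\varepsilon$.} Given $Y$ of infinite measure, build integers $m_0<m_1<\cdots$ greedily: $m_0=0$, and $m_{i+1}$ is the least integer $m>m_i$ with $\lambda(Y\cap[m_i,m))>\varepsilon$. Such $m$ exists because $\lambda(Y\cap[m_i,\infty))=\infty$. Put $\psi(Y)=\{m_i:i<\omega\}\in[\omega]^\omega$.

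Suppose $[f(n),f(n+1))$ contains two points $m_i<m_j$ of $\psi(Y)$. Then it contains $[m_i,m_{i+1})$, so $\lambda([f(n),f(n+1))\cap Y)>\varepsilon$. Contrapositively, measure at most $\varepsilon$ in an interval forces at most one point of $\psi(Y)$ there. Again index by index, $f\sqsubset_\iota^\varepsilon Y$ implies $f\sqsubset_\iota^1\psi(Y)$.

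\textbf{Expected obstacle.} The main difficulty is not the two reductions, which are short, but matching them to the paper's exact definition of $\Mbf_\iota^\varepsilon$. If the right-hand side also allows sets of finite measure, these are trivially bounding in the $\forall$ case, since tails have measure tending to $0$. Then $\psi$ must still send them to something suitable, and this needs care; I would first try to show such $Y$ can be discarded without changing the Tukey class. If the ambient space is a compact interval rescaled by $f$, Step 2's greedy construction must be replaced by choosing points at which the cumulative measure of $Y$ increases by more than $\varepsilon$; the argument is otherwise the same. The other point to check is that the earlier sections really give $\Rbf_\iota^k\leqT\Rbf_\iota^1$ and not just equality of the cardinals. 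If they do not, I would prove it directly: given $X$, thin it to $X'$ by keeping every $(k+1)$-th element. Then any interval containing at least two points of $X'$ contains at least $k+2$ points of $X$, so $f\sqsubset^k X$ implies $f\sqsubset^1 X'$.
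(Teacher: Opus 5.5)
Your proof is correct, but it is organized differently from the paper's. The paper first proves $\Mbf^k_\forall\eqT\Rbf^k_\forall$ only for integer thresholds (\autoref{a106}). In one direction it uses unit blocks $\bigcup_{j\in X}[j,j+1)$. In the other it uses greedy blocks of $Y$-measure $\ge 2$, so that $k+1$ points in an interval force measure $\ge 2k>k$. It then removes the dependence on $\varepsilon$ by a separate rescaling lemma (\autoref{a110}: $f\mapsto B\cdot f$, $Y\mapsto Y/B$), and treats the $\exists$ case by ``the same argument''.

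You instead handle an arbitrary real $\varepsilon$ directly. You shrink the blocks to width $\min\{\varepsilon/k,1\}$, and you choose the greedy thresholds to be $>\varepsilon$, so that a single gap suffices and you land on $\Rbf^1_\iota$ rather than $\Rbf^k_\iota$. Because both of your reductions work index by index, they cover $\forall$ and $\exists$ simultaneously, and the scaling lemma becomes unnecessary. The paper's decomposition, in exchange, isolates two facts as separate statements: the same-parameter equivalence $\Mbf^k_\iota\eqT\Rbf^k_\iota$, and the $\varepsilon$-independence $\Mbf^\varepsilon_\iota\eqT\Mbf^\delta_\iota$. Both of these also follow at once from your result.

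One small correction to your bookkeeping. To close the sandwich $\Rbf^1_\iota\leqT\Mbf^\varepsilon_\iota\leqT\Rbf^k_\iota$ you need only $\Rbf^k_\iota\leqT\Rbf^1_\iota$. That is exactly the trivial identity connection coming from $\sqsubset^1_\iota\subseteq\sqsubset^k_\iota$ (\autoref{fac:bas}), which you already noted. The thinning argument you sketch (keep every $(k+1)$-th point) actually proves the opposite reduction, $\Rbf^1_\iota\leqT\Rbf^k_\iota$, not $\Rbf^k_\iota\leqT\Rbf^1_\iota$ as you labelled it. It is correct but superfluous, since that reduction already follows by composing your two steps. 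So nothing from the earlier sections is needed beyond \autoref{mthmone}, and only for the final cardinal values.
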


We also investigate interval relations with variable asymptotic tolerances. 
Instead of requiring a fixed bound $k$, we allow the admissible number of points to increase with the interval index itself. Let
\[
\omega^{>\id}
=
\{
f\in\omega^{\uparrow\omega}
:
f(n+1)-f(n)>n
\},
\]
and for $\iota\in\{\forall,\exists\}$ define
\[
\Rbf^{\id}_\iota
=
\langle
\omega^{>\id},
[\omega]^\omega,
\sqsubset_{\id}
\rangle,
\]
where
\[
f\sqsubset^{\id}_\iota X
\; \iff\;
\iota^\infty n\
\bigl|[f(n),f(n+1))\cap X\bigr|
\le n.
\]

At first sight, allowing the bounds to diverge suggests a substantially weaker asymptotic restriction. Surprisingly, the associated cardinal characteristics remain unchanged.

\begin{teorema}[\autoref{a120}-\ref{a121}]\label{mthmseven}
$\bfrak(\Rbf^{\id}_\forall)
=
\bfrak$ and $\dfrak(\Rbf^{\id}_\forall)
=
\dfrak$ while $\bfrak(\Rbf^{\id}_\exists)
=
\bfrak$ and $\dfrak(\Rbf^{\id}_\exists)
=
\dfrak$
\end{teorema}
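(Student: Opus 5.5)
The plan is to reduce everything to Blass's interval-partition description of $\bfrak$ and $\dfrak$ and to \autoref{mthmone}. Identify $f\in\omega^{>\id}$ with the interval partition $I^f_n=[f(n),f(n+1))$, where $|I^f_n|>n$. For $X\in[\omega]^\omega$ let $P_X$ be the interval partition whose $k$-th interval is the shortest initial-segment-aligned interval containing exactly $k+1$ points of $X$. Recall that $Q$ \emph{dominates} $P$ if almost every $Q$-interval contains some $P$-interval. Blass's characterization gives two facts: fewer than $\bfrak$ partitions are all dominated by a single partition, and $\dfrak$ is the least size of a family of partitions dominating every partition. Two basic observations follow from the definitions:
\[
f\sqsubset^{\id}_\forall X\Rightarrow f\sqsubset^{\id}_\exists X,
\qquad
f\not\sqsubset^{\id}_\exists X\iff \forall^\infty n\ |I^f_n\cap X|>n .
\]

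\textbf{Universal case.} For $\bfrak(\Rbf^{\id}_\forall)\ge\bfrak$, I take $F\subseteq\omega^{>\id}$ with $|F|<\bfrak$ and choose $g$ eventually dominating every $f\in F$. I then build $X=\{x_0<x_1<\cdots\}$ with $x_{j+1}>g(g(x_j))$. Each $I^f_n$ then eventually contains at most one point of $X$, so $f\sqsubset^{\id}_\forall X$ for every $f\in F$. For $\dfrak(\Rbf^{\id}_\forall)\le\dfrak$, the same construction applied to a dominating family of size $\dfrak$ gives a family of sets $X$ of size $\dfrak$ that handles every $f$.

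For the reverse inequalities I use the partitions $P_X$. Suppose $F\subseteq\omega^{>\id}$ is such that the partitions $I^f$ form a dominating family in Blass's sense. For each $X$ there is then an $f\in F$ with almost every $I^f_n$ containing some $P_X$-interval. The index of that interval can be arranged to be at least $n$, by thinning $F$ so that $I^f_n$ is a union of at least $n$ earlier-style blocks. Hence $|I^f_n\cap X|>n$ eventually, and $f\not\sqsubset^{\id}_\forall X$. Dually, a family of fewer than $\bfrak$ sets $X$ has all its $P_X$ dominated by one partition, so it cannot be $\Rbf^{\id}_\forall$-dominating. Combining this with \autoref{mthmone} should give $\bfrak(\Rbf^{\id}_\forall)=\bfrak$ and $\dfrak(\Rbf^{\id}_\forall)=\dfrak$.

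\textbf{Existential case.} The first observation gives
\[
\bfrak\le\bfrak(\Rbf^{\id}_\forall)\le\bfrak(\Rbf^{\id}_\exists)
\quad\text{and}\quad
\dfrak(\Rbf^{\id}_\exists)\le\dfrak(\Rbf^{\id}_\forall)=\dfrak .
\]
The $P_X$ argument then gives the two remaining sandwich inequalities:
\[
\bfrak(\Rbf^{\id}_\exists)\le\dfrak
\quad\text{and}\quad
\dfrak(\Rbf^{\id}_\exists)\ge\bfrak .
\]
The second one holds because, by the displayed equivalence, a single partition $Q\in\omega^{>\id}$ dominating all $P_X$ for $X$ in a family of size less than $\bfrak$ witnesses that the family is not $\Rbf^{\id}_\exists$-dominating. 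So both existential invariants lie in $[\bfrak,\dfrak]$.

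\textbf{Main obstacle.} Pinning down the existential values exactly, as the statement asserts, requires $\bfrak(\Rbf^{\id}_\exists)\le\bfrak$ and $\dfrak(\Rbf^{\id}_\exists)\ge\dfrak$. I expect this to be the hard step and I am doubtful it can be done. My first attempt would be to take an unbounded $<^*$-well-ordered family $\{h_\alpha:\alpha<\bfrak\}$ and define $f_\alpha$ by iterating $h_\alpha$, hoping that for every $X$ some $f_\alpha$ catches more than $n$ points of $X$ in almost every interval. The obstruction is that unboundedness only yields infinitely many good $n$, while $\neg\sqsubset^{\id}_\exists$ demands almost all $n$. In fact the Blass-style argument sketched above suggests the opposite values, $\bfrak(\Rbf^{\id}_\exists)=\dfrak$ and $\dfrak(\Rbf^{\id}_\exists)=\bfrak$, exactly as in \autoref{mthmone}. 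So before attempting this step I would check whether the existential clause of the statement has its two values interchanged.
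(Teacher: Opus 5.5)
Your suspicion about the existential clause is correct: as printed, its two values are interchanged. The paper proves this clause in \autoref{a121}, which gives $\Rbf^{\id}_\exists\eqT\Rbf^0_\exists$. Together with \autoref{a61} this yields $\bfrak(\Rbf^{\id}_\exists)=\dfrak$ and $\dfrak(\Rbf^{\id}_\exists)=\bfrak$, which is what the corollary after \autoref{a121} records. The clause as stated is therefore equivalent to $\bfrak=\dfrak$, and fails, e.g., in the Cohen model.

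Your proposal does not prove the corrected values either. It only shows that both existential invariants lie between $\bfrak$ and $\dfrak$. The missing half, $\bfrak(\Rbf^{\id}_\exists)\ge\dfrak$ and $\dfrak(\Rbf^{\id}_\exists)\le\bfrak$, is actually the trivial one. We have $\omega^{>\id}\subseteq\omega^{\uparrow\omega}$, and at most one point of $X$ in infinitely many $I^f_n$ implies at most $n$ points in infinitely many $I^f_n$. So inclusion and identity witness $\Rbf^{\id}_\exists\leqT\Rbf^1_\exists$, and \autoref{mthmone} (concretely \autoref{a31}) supplies the values for $\Rbf^1_\exists$.

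For the other half the paper avoids Blass partitions. It proves $\Rbf^0_\exists\leqT\Rbf^{\id}_\exists$ with $\Psi_+=\id$ and $\Psi_-(f)=h$, where $h(0)=0$ and $h(n+1)=f(h(n)+n+1)$. Then the $n$-th $h$-interval contains $n+1$ consecutive $f$-intervals, so at most $n$ points of $X$ there forces an empty $f$-interval. Your $P_X$ route for this half is acceptable modulo the index bookkeeping.

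The universal case has a genuine gap. Your easy inequalities are fine. The ``reverse'' argument, however, shows only that a Blass-dominating family (of size $\dfrak$) is $\Rbf^{\id}_\forall$-unbounded, and that fewer than $\bfrak$ sets are not $\Rbf^{\id}_\forall$-dominating. That is, it gives $\bfrak(\Rbf^{\id}_\forall)\le\dfrak$ and $\dfrak(\Rbf^{\id}_\forall)\ge\bfrak$, whereas you need $\bfrak(\Rbf^{\id}_\forall)\le\bfrak$ and $\dfrak(\Rbf^{\id}_\forall)\ge\dfrak$. Appealing to \autoref{mthmone} does not close this, because it enters only through $\Rbf^{\id}_\forall\leqT\Rbf^k_\forall$, which gives the direction you already have.

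The mechanism itself is too strong for this side. ``Almost every $I^f_n$ contains a $P_X$-interval'' yields $f\not\sqsubset^{\id}_\exists X$. That is why it needs a \emph{dominating} family and fits the existential side, while refuting $\sqsubset^{\id}_\forall$ needs only infinitely many bad $n$.

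The paper supplies the missing direction in \autoref{a120} as $\Dbf\leqT\Rbf^{\id}_\forall$. Send $X=\{x_0<x_1<\cdots\}$ to any $h\in\omega^{>\id}$ with $x_{n^2}\le h(n)$ for almost all $n$. Suppose $|I^f_n\cap X|\le n$ for all $n\ge N$, and let $C=|[0,f(N))\cap X|$. Then $[0,f(n))$ contains at most $C+(n-N)(n-1)\le C+n^2-n$ points of $X$. Hence $f(n)\le x_{n^2}\le h(n)$ once $n>C$. The quadratic indexing is exactly what absorbs the growing threshold $n$.
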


Finally, we study bounded interval relations in which the parameter $k$ is no longer fixed in advance, but may depend on the witnessing pair itself. Let
\[
\omega^\omega_\uparrow
=
\{
f\in\omega^\omega
:
f(n+1)-f(n)\to\infty
\},
\]
and define
\[
\Rbf_\iota^{\mathrm{bd}}
=
\langle
\omega^\omega_\uparrow,
[\omega]^\omega,
\sqsubset_\iota^{\mathrm{bd}}
\rangle,
\qquad
\iota\in\{\forall,\exists\},
\]
where
\[
f\sqsubset_\forall^{\mathrm{bd}}X
\; \iff\;
\exists k\in\omega\
\forall^\infty n\
\bigl|[f(n),f(n+1))\cap X\bigr|
\le k,
\]
and
\[
f\sqsubset_\exists^{\mathrm{bd}}X
\; \iff\;
\exists k\in\omega\
\exists^\infty n\
\bigl|[f(n),f(n+1))\cap X\bigr|
\le k.
\]

These bounded variants show that the universal/existential dichotomy persists even when the threshold is allowed to vary dynamically.

\begin{teorema}[\autoref{a84}--\autoref{a101}]\label{mthmtwo}
In $\thzfc$, $\bfrak(\Rbf_\exists^{\mathrm{bd}})
=
\dfrak$, $\dfrak(\Rbf_\exists^{\mathrm{bd}})
=
\bfrak$ and $\dfrak(\Rbf_\forall^{\mathrm{bd}})
=
\dfrak$, $\mathfrak v(\Rbf_\forall^{\mathrm{bd}})
=
\bfrak$. 
\end{teorema}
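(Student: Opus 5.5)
The plan is to compare each bounded system with the fixed-parameter systems of \autoref{mthmone} in the easy direction, and to prove the remaining inequalities directly, converting between sets $X\in[\omega]^\omega$ and functions in $\omega^\omega$ in the style of Vojt\'a\v{s}. I read the symbol $\mathfrak v(\Rbf_\forall^{\mathrm{bd}})$ in the statement as $\bfrak(\Rbf_\forall^{\mathrm{bd}})$. Throughout, I assume elements of $\omega^\omega_\uparrow$ are strictly increasing. I will also use that every $g\in\omega^\omega$ is eventually dominated by some $f\in\omega^\omega_\uparrow$, so $\omega^\omega_\uparrow$ is cofinal in $\langle\omega^\omega,\le^*\rangle$.

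\textbf{Easy inequalities.} If $f\sqsubset^1_\iota X$, then $f\sqsubset^{\mathrm{bd}}_\iota X$, witnessed by $k=1$. Since $\omega^\omega_\uparrow\subseteq\omega^{\uparrow\omega}$, a set $F\subseteq\omega^\omega_\uparrow$ that is $\Rbf^1_\iota$-bounded is also $\Rbf^{\mathrm{bd}}_\iota$-bounded. Likewise, an $\Rbf^1_\iota$-dominating family is $\Rbf^{\mathrm{bd}}_\iota$-dominating. Combined with \autoref{mthmone}, this gives
\[
\bfrak(\Rbf^{\mathrm{bd}}_\forall)\ge\bfrak,\quad \dfrak(\Rbf^{\mathrm{bd}}_\forall)\le\dfrak,\quad \bfrak(\Rbf^{\mathrm{bd}}_\exists)\ge\dfrak,\quad \dfrak(\Rbf^{\mathrm{bd}}_\exists)\le\bfrak.
\]

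\textbf{Existential case, remaining inequalities.} For $X\in[\omega]^\omega$, let $\varphi_X(m)$ be the least $j$ such that $|[m,j)\cap X|\ge m$. Given $g\in\omega^\omega$, define $f_g\in\omega^\omega_\uparrow$ recursively by
\[
f_g(n+1)=\max\{g(f_g(n)),\,f_g(n)+n+1\}.
\]
If $g\ge^*\varphi_X$, then for almost all $n$ the interval $[f_g(n),f_g(n+1))$ contains at least $f_g(n)\to\infty$ points of $X$, so $f_g\not\sqsubset^{\mathrm{bd}}_\exists X$. Now take a $\le^*$-dominating family $D$ of size $\dfrak$. Every $\varphi_X$ is dominated by some $g\in D$, so $\{f_g:g\in D\}$ is unbounded, and $\bfrak(\Rbf^{\mathrm{bd}}_\exists)\le\dfrak$. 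Conversely, let $\mathcal S\subseteq[\omega]^\omega$ with $|\mathcal S|<\bfrak$. Choose $g$ dominating every $\varphi_X$ for $X\in\mathcal S$. Then $f_g$ witnesses that $\mathcal S$ is not dominating, so $\dfrak(\Rbf^{\mathrm{bd}}_\exists)\ge\bfrak$.

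\textbf{Universal case, remaining inequalities.} For $X\in[\omega]^\omega$, let $e_X$ be its increasing enumeration. Suppose $f\sqsubset^{\mathrm{bd}}_\forall X$ with bound $k$. Then there is $C$ with $|X\cap[0,f(n))|\le kn+C$ for all $n$. Hence $f(n)\le e_X(kn+C)$, so $f\le^* h_{X,k,C}$, where $h_{X,k,C}(n):=e_X(kn+C)$.
\begin{itemize}
\item If $\mathcal S$ is $\Rbf^{\mathrm{bd}}_\forall$-dominating, then $\{h_{X,k,C}:X\in\mathcal S,\ k,C\in\omega\}$ $\le^*$-dominates $\omega^\omega_\uparrow$, hence dominates $\omega^\omega$ by cofinality. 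Therefore $|\mathcal S|\cdot\aleph_0\ge\dfrak$, which gives $\dfrak(\Rbf^{\mathrm{bd}}_\forall)\ge\dfrak$.
\item Take an unbounded family of size $\bfrak$ and replace each member by a $\le^*$-larger element of $\omega^\omega_\uparrow$; the result $F$ is still unbounded. If some $X$ had $f\sqsubset^{\mathrm{bd}}_\forall X$ for all $f\in F$, then every $f\in F$ would be dominated by one of the countably many $h_{X,k,C}$. A single bound for these countably many functions would then bound $F$, a contradiction. So $\bfrak(\Rbf^{\mathrm{bd}}_\forall)\le\bfrak$.
\end{itemize}

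\textbf{Main obstacle.} The delicate step is the existential recursion defining $f_g$. It must simultaneously keep $f_g$ in $\omega^\omega_\uparrow$, which the term $f_g(n)+n+1$ guarantees, and force the point counts in the intervals to diverge so that no fixed $k$ works, which the domination $g\ge^*\varphi_X$ guarantees. The universal case only needs the countable-union observation over the parameters $(k,C)$.
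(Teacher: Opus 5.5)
Your proof is correct, but it takes a partly different route from the paper. You read $\mathfrak v$ correctly as $\bfrak$, and your easy half matches the paper's \autoref{varIVa}: identity maps, witness $k=1$, then \autoref{mthmone}.

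\textbf{Existential system.} The paper does not argue directly here. It proves $\Rbf^k_\exists\leqT\Rbf^{\mathrm{bd}}_\exists$ via $f\mapsto\la f(n^2):n\in\omega\ra$: a coarse interval containing $2n+1>k$ fine intervals but at most $k$ points of $X$ must have an empty fine interval. It then imports \autoref{a32}, whose nontrivial half ultimately rests on Blass's computation of $\bfrak(\Ior)$ and $\dfrak(\Ior)$. Your $\varphi_X$/$f_g$ argument is in effect a direct Tukey connection $(\Rbf^{\mathrm{bd}}_\exists)^\perp\leqT\Dbf$ that makes the point counts diverge.
\begin{itemize}
\item What yours buys: it is self-contained and bypasses Blass's theorem; for the easy half you only need the elementary \autoref{a31}.
\item What the paper's buys: it is shorter given the earlier results, and together with \autoref{varIVa} it also shows $\Rbf^{\mathrm{bd}}_\exists\eqT\Rbf^k_\exists$.
\end{itemize}

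\textbf{Universal system.} Your counting argument ($|X\cap[0,f(n))|\le kn+C$, hence $f\le e_X(kn+C)$) is the same idea as in \autoref{a100}. The paper packages it as a single Tukey connection $\Dbf\leqT\Rbf^{\mathrm{bd}}_\forall$, which differs from yours in two ways.
\begin{itemize}
\item The quadratic index in $\Phi_+(X)(n)=x_{n^2}$ absorbs all the linear indices $kn+C$ at once. You instead use a countable family $h_{X,k,C}$ and the fact that countable subsets of $\la\omega^\omega,\le^*\ra$ are bounded. Note that the step from $|\mathcal S|\cdot\aleph_0\ge\dfrak$ to $|\mathcal S|\ge\dfrak$ uses that $\dfrak$ is uncountable, which you leave implicit.
\item On the function side, the paper uses the recursion $f(n+1)=g(f(n))+n+1$, whereas you replace each function by a $\le^*$-larger element of $\omega^\omega_\uparrow$.
\end{itemize}
You could recover the paper's Tukey form directly by observing that $e_X(kn+C)\le e_X(n^2)$ for all sufficiently large $n$.
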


Together, these results show that the classical pair $(\bfrak,\dfrak)$ is encoded in a remarkably broad spectrum of interval phenomena. The persistence of the same invariants across fixed, variable, colored, geometric, bounded, and measure-theoretic settings suggests that Vojt\'a\v{s}-type interval systems form a robust universality class governing asymptotic sparsity on $\omega$.

Methodologically, the proofs throughout the paper rely heavily on Tukey connections between relational systems.
This approach allows us to dualize cardinal inequalities (for a general overview
on this framework, see, e.g.,~\cite{Vojtas,blass}). 

Let $\Rbf=\la X,Y,{\sqsubset}\ra$ and $\Rbf'=\la X',Y',{\sqsubset'}\ra$ be two relational systems. We say that
$(\Psi_-,\Psi_+)\colon\Rbf\to\Rbf'$
is a~\emph{Tukey connection from $\Rbf$ into $\Rbf'$} if
$\Psi_-\colon X\to X'$ and $\Psi_+\colon Y'\to Y$ are functions such that
\[
\forall x\in X\;\forall y'\in Y'\bsp
(\Psi_-(x)\sqsubset' y' \Rightarrow x\sqsubset \Psi_+(y')).
\]
The \emph{Tukey order} between relational systems is defined by
$\Rbf\leqT\Rbf'$ iff there is a~Tukey connection from $\Rbf$ into $\Rbf'$. \emph{Tukey equivalence} is defined by $\Rbf\eqT\Rbf'$ iff $\Rbf\leqT\Rbf'$ and $\Rbf'\leqT\Rbf$.

The dual of $\Rbf$ is defined by $\Rbf^\perp:=\la Y,X,{\sqsubset^\perp}\ra$ where $y\sqsubset^\perp x$ iff $x\not\sqsubset y$. Note that $\bfrak(\Rbf^\perp)=\dfrak(\Rbf)$ and $\dfrak(\Rbf^\perp)=\bfrak(\Rbf)$.

\begin{example}\label{b-d}
Let $\Dbf=\la\omega^{\uparrow\omega},\omega^{\uparrow\omega},\leq^*\ra$ where $x\leq^* y$ means $\forall^\infty n\; x(n)\leq y(n)$. Then $\bfrak:=\bfrak(\Dbf)$ and $\dfrak:=\dfrak(\Dbf)$.
\end{example}

\section{ZFC-results}

In this section, we establish all the main results stated in \autoref{s0}. Throughout, we analyze the interval relational systems introduced above and compute their associated bounding and dominating numbers within $\thzfc$.

\begin{definition}\label{def:bas}
Let $k\ge1$.
    \begin{enumerate}[label=\rm(\alph*)]
        \item For $f \in \omega^{\uparrow\omega}$ and $X \in [\omega]^\omega$, let \[f\;\sqsubset_\forall^{k}\;X\; \iff\;\forall^\infty n\;\bigl|\,[\,f(n),f(n+1))\cap X\bigr|\le k,\]
\[f\;\sqsubset_\exists^{k}\;X\; \iff\;\exists^\infty n\;\bigl|\,[\,f(n),f(n+1))\cap X\bigr|\le k.\]
        \item Define the relational system $\Rbf_\iota^k=\la\omega^{\uparrow\omega},[\omega]^\omega,\sqsubset_\iota^k\ra$ for $\iota\in\{\forall,\exists\}$. 
    \end{enumerate}
\end{definition}

We begin by distinguishing the case $k=0$ for $\sqsubset_\forall^{k}$.

\begin{remark}
 It is easy to see that any infinite set $X$ has a nonempty intersection with infinitely many intervals of any interval partition, therefore, $f\not\sqsubset^0_\forall X$ for every $f\in \omega^{\uparrow\omega}$, $X\in [\omega]^\omega$ and consequently $\mathfrak{b}(\mathbf{R}_\forall^0) = 1$ and $\mathfrak{d}(\mathbf{R}_\forall^0)$ is not well-defined or $\infty$.
\end{remark}

We now record the basic monotonicity properties of these relational systems. 

\begin{fact}\label{fac:bas}
Let $f\in \omega^{\uparrow\omega}$ and $X\in [\omega]^\omega$. 
    \begin{enumerate}[label=\rm(\arabic*)]
        \item\label{fac:basc}  If $k<\ell$ then $\Rbf^\ell_\iota\leqT\Rbf^k_\iota$ for $\iota\in\{\forall,\exists\}$. 
         \item\label{fac:basb} For each $k\in\omega$, we have $\Rbf^k_\exists\leqT\Rbf^k_\forall$.
    \end{enumerate}
\end{fact}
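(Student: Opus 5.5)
Both items follow from the identity Tukey connection. Take $\Psi_-=\id$ on $\omega^{\uparrow\omega}$ and $\Psi_+=\id$ on $[\omega]^\omega$. This is legitimate because all the systems $\Rbf^k_\iota$ share the same underlying sets $\omega^{\uparrow\omega}$ and $[\omega]^\omega$. With these choices, a Tukey connection from $\Rbf$ into $\Rbf'$ reduces to the inclusion of relations ${\sqsubset'}\subseteq{\sqsubset}$, meaning $f\sqsubset' X\Rightarrow f\sqsubset X$ for all $f,X$. So for each item I only need to verify such an implication, reading the relation of the larger system as the hypothesis.

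For \ref{fac:basc}, let $k<\ell$. I want $(\id,\id)\colon\Rbf^\ell_\iota\to\Rbf^k_\iota$, which requires $f\sqsubset^k_\iota X\Rightarrow f\sqsubset^\ell_\iota X$. Suppose $\iota^\infty n\ \bigl|[f(n),f(n+1))\cap X\bigr|\le k$. Since $k<\ell$, every $n$ with this property also satisfies $\bigl|[f(n),f(n+1))\cap X\bigr|\le \ell$. So the set of $n$ witnessing the $\ell$-condition contains the set witnessing the $k$-condition. Being cofinite is preserved under taking supersets, and so is being infinite. Hence the implication holds for $\iota=\forall$ and for $\iota=\exists$.

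For \ref{fac:basb}, I want $(\id,\id)\colon\Rbf^k_\exists\to\Rbf^k_\forall$, which requires $f\sqsubset^k_\forall X\Rightarrow f\sqsubset^k_\exists X$. This is the fact that $\forall^\infty n$ implies $\exists^\infty n$: a cofinite subset of $\omega$ is infinite.

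There is no real obstacle in either item. The only point needing care is the direction of the Tukey order: $\Rbf\leqT\Rbf'$ requires $\Psi_-(x)\sqsubset' y'\Rightarrow x\sqsubset\Psi_+(y')$, so the relation of the system on the right-hand side must imply the relation on the left. I have oriented both implications accordingly. As a sanity check, the resulting inequalities $\bfrak(\Rbf^\ell_\iota)\ge\bfrak(\Rbf^k_\iota)$ and $\dfrak(\Rbf^\ell_\iota)\le\dfrak(\Rbf^k_\iota)$ are as expected, since enlarging the tolerance from $k$ to $\ell$ makes the relation easier to satisfy.
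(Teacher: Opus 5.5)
Your proof is correct and is the intended argument: the paper records this as a Fact without proof, and the identity Tukey connection works exactly as you say, since $\le k$ implies $\le\ell$ and $\forall^\infty$ implies $\exists^\infty$. You have the direction of the Tukey order right, and the argument also covers $k=0$ (vacuously for $\iota=\forall$); the paper uses the same identity-map reasoning elsewhere, e.g.\ in the first part of Lemma~\ref{a120}.
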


As recalled in \autoref{s0}, Vojt\'a\v{s}~\cite{vojtasTM} showed that $\bfrak(\Rbf_\forall^{1})=\bfrak$ and $\dfrak(\Rbf_\forall^{1})=\dfrak$, so by \autoref{fac:bas} we have $\bfrak\leq\bfrak(\Rbf_\forall^{k})$ and  $\dfrak\geq\dfrak(\Rbf_\forall^{k})$. Therefore, in order to establish the converse, it suffices to prove the following lemma.

\begin{lemma}\label{lem:forall}
For $k\geq2$, we have $\Dbf\leqT\Rbf_\forall^{k}$. In particular, $\bfrak\geq\bfrak(\Rbf_\forall^{k})$ and  $\dfrak\leq\dfrak(\Rbf_\forall^{k})$. 
\end{lemma}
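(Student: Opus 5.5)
The plan is to build an explicit Tukey connection $(\Psi_-,\Psi_+)\colon\Dbf\to\Rbf_\forall^{k}$. Here $\Psi_-\colon\omega^{\uparrow\omega}\to\omega^{\uparrow\omega}$ turns a function $x$ into an interval partition that grows fast relative to $x$. $\Psi_+\colon[\omega]^\omega\to\omega^{\uparrow\omega}$ turns an infinite set $Y$ into a function that reads off elements of $Y$ a fixed number of steps ahead. The inequalities $\bfrak\geq\bfrak(\Rbf_\forall^{k})$ and $\dfrak\leq\dfrak(\Rbf_\forall^{k})$ then follow from the standard fact that $\Rbf\leqT\Rbf'$ implies $\bfrak(\Rbf')\le\bfrak(\Rbf)$ and $\dfrak(\Rbf)\le\dfrak(\Rbf')$. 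The argument should in fact work for every $k\ge1$.

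\emph{Definition of $\Psi_-$.} For $x\in\omega^{\uparrow\omega}$ define $f=\Psi_-(x)$ recursively by $f(0)=0$ and $f(n+1)=\max\{x(f(n)),f(n)\}+1$. This $f$ is strictly increasing, and $f(n+1)>x(f(n))$ for every $n$.

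\emph{Definition of $\Psi_+$.} For $Y\in[\omega]^\omega$ let $g=\Psi_+(Y)$ be the function sending $m$ to the $(2k+1)$-th element of $Y\cap[m,\infty)$ in increasing order. This $g$ is non-decreasing. If strict increase is needed to land in $\omega^{\uparrow\omega}$, replace it by $g(m)+m$, which changes nothing below.

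\emph{Verification.} Assume $\Psi_-(x)=f\sqsubset_\forall^k Y$. Fix $N$ such that $|[f(n),f(n+1))\cap Y|\le k$ for all $n\ge N$, and take any $m\ge f(N)$. Let $n\ge N$ be the index with $m\in[f(n),f(n+1))$.
\begin{itemize}
\item The set $Y\cap[m,f(n+2))$ is contained in $I_n\cup I_{n+1}$, where $I_j=[f(j),f(j+1))$, so it has at most $2k$ elements.
\item Hence the $(2k+1)$-th element of $Y$ above $m$ satisfies $g(m)\ge f(n+2)$.
\item By construction $f(n+2)>x(f(n+1))$, and $x(f(n+1))\ge x(m)$ because $m<f(n+1)$ and $x$ is increasing.
\end{itemize}
Therefore $x(m)<g(m)$ for all $m\ge f(N)$, that is, $x\le^*\Psi_+(Y)$, as required.

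The only delicate point is the choice of how far ahead $\Psi_+$ looks. It must skip past the rest of the interval containing $m$ and the entire next interval, so that the partition's growth condition $f(n+2)>x(f(n+1))$ applies. Since each of these two intervals carries at most $k$ points of $Y$, looking $2k+1$ points ahead suffices.
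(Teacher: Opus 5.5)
Your proof is correct, but it takes a different route from the paper's. The paper uses the identity for $\Psi_-$ and sets $\Psi_+(X)(n)=x_{(k+1)n}$, where $\seq{x_i}{i\in\omega}$ is the increasing enumeration of $X$. If eventually each interval $[f(n),f(n+1))$ contains at most $k$ points of $X$, then the number $a_n$ of points of $X$ below $f(n)$ satisfies $a_{n+1}\le a_n+k$. So $a_n\le c+kn$, and hence $f(n)\le x_{a_n}\le x_{c+kn}<x_{(k+1)n}$ once $n>c$.

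You instead encode $x$ into a fast-growing partition with $f(n+1)>x(f(n))$, and let $\Psi_+$ look $2k+1$ points of $Y$ ahead of the position $m$. Then $x(m)<x(f(n+1))<f(n+2)\le g(m)$ holds pointwise for every $m\ge f(N)$.

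The paper's argument shows that the identity map already works: every partition captured by $X$ is itself eventually dominated by a function computed from $X$ alone. The price is a global counting argument in which the unknown offset $c$ has to be absorbed by using slope $k+1$. Your argument is purely local and needs no such bookkeeping, at the price of a nontrivial $\Psi_-$. In this respect it is closer to the paper's proof of \autoref{a100} for the bounded variant, which uses a nontrivial $\Phi_-$ of the same kind. As you note, both arguments work verbatim for every $k\ge1$.
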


\begin{proof}
Let $\Psi_-\colon \omega^{\uparrow\omega}\to\omega^{\uparrow\omega}$ be defined as the identity map. To define $\Psi_+\colon[\omega]^\omega\to\omega^{\uparrow\omega}$, first define $h_X\colon \omega \to \omega$ for every $X\in[\omega]^\omega$ as follows. Let $X = \{x_0, x_1, x_2, \ldots\}$ be the increasing enumeration of $X$, i.e.,  $x_0 < x_1 < x_2 < \cdots$.

Define $h_X\colon \omega \to \omega$ by:
\[
h_X(n) = x_{(k+1)n}.
\]
Put $\Psi_+(X)=h_X$. Now, let $f \in \omega^{\uparrow\omega}$, $X\in[\omega]^\omega$, and suppose that $f \sqsubset_\forall^{k} X$. This means there exists $N \in \omega$ such that for all $n \geq N$,
\[
\left| [f(n), f(n+1)) \cap X \right| \leq k.
\]
For each $n \in \omega$, let \[a_n=|\set{i\in\omega}{x_i< f(n)}|.\]
By the definition of $\seq{a_n}{n\in\omega}$, for every $n\geq 1$ we have
\[
x_{a_n-1} < f(n) \leq x_{a_n}.
\]

For $n \geq N$, $f \sqsubset_\forall^{k} X$ implies that the number of elements of $X$ in ${[f(n), f(n+1))}$ is at most $k$. Thus,
\[
a_{n+1} \leq a_n + k.
\]

Solving this recurrence for $n \geq N$,
\[
a_n \leq a_N + k(n - N) \leq c + kn,
\]
where $c = a_N - kN$ is a constant. Therefore, for all $n \geq N$,
\[
a_n \leq c + kn.
\]

Since $f(n) \leq x_{a_n}$ and the sequence $\seq{x_i}{i\in\omega}$ is increasing, for $n\geq N$ we have
\[
f(n) \leq x_{a_n} \leq x_{c + kn}.
\]

Now, consider $h_X(n) = x_{(k+1)n}$. For sufficiently large $n$ (specifically, when $n > c$),
\[
(k+1)n > c + kn,
\]
so
\[
x_{(k+1)n} > x_{c + kn}.
\]
Thus,  for $n>\max\{c, N\}$,
\[
f(n) \leq x_{c + kn} < x_{(k+1)n} = h_X(n),
\]
which implies $f(n) \leq h_X(n)$.

Therefore, $f(n) \leq h_X(n)$ for all but finitely many $n$, i.e.,  $f \leq^* h_X$.
\end{proof}

The existential variant behaves dually. More precisely, the next two lemmas identify the cardinal characteristics associated with $\Rbf_\exists^1$.

\begin{lemma}\label{a31}
$\Rbf_\exists^1 \leqT \Dbf^\perp$. In particuar, $\dfrak(\Rbf_\exists^1)\leq\bfrak$ and $\bfrak(\Rbf_\exists^1)\geq \dfrak$. 
\end{lemma}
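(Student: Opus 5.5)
We need a Tukey connection $(\Psi_-,\Psi_+)\colon\Rbf_\exists^1\to\Dbf^\perp$. Here $\Dbf^\perp=\la\omega^{\uparrow\omega},\omega^{\uparrow\omega},\sqsubset^\perp\ra$ with $y\sqsubset^\perp x$ iff $x\not\leq^* y$, i.e. $\exists^\infty n\; x(n)>y(n)$. So we need $\Psi_-\colon\omega^{\uparrow\omega}\to\omega^{\uparrow\omega}$ and $\Psi_+\colon\omega^{\uparrow\omega}\to[\omega]^\omega$ such that whenever $\exists^\infty n\ g(n)>\Psi_-(f)(n)$, we have $f\sqsubset_\exists^1\Psi_+(g)$, i.e. infinitely many intervals $[f(n),f(n+1))$ contain at most one point of $\Psi_+(g)$. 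The inequalities then follow: $\dfrak(\Rbf_\exists^1)\le\dfrak(\Dbf^\perp)=\bfrak$ and $\bfrak(\Rbf_\exists^1)\ge\bfrak(\Dbf^\perp)=\dfrak$.

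The plan is to let $\Psi_+(g)$ be a very sparse set built from $g$, and $\Psi_-(f)$ encode how far one must go past $f$ to cross an interval. Concretely, I take $\Psi_+(g)=X_g=\{x_0<x_1<\cdots\}$ with $x_0=g(0)$ and $x_{i+1}=\max\{x_i,g(i+1)\}+1$. Then $X_g$ is infinite and $x_i\ge g(i)$. For $f$, I set $\Psi_-(f)(i)=f(m_i+1)$, where $m_i$ is the least $m$ with $f(m)>\Psi_-(f)(i-1)$ (say $m_0$ with $f(m_0)>0$), so that $\Psi_-(f)$ is strictly increasing. The intended mechanism is this: if $g(i)>\Psi_-(f)(i)$ then $x_i$ is large, and the $i$ points $x_0,\dots,x_{i-1}$ are smaller. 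We want some interval $[f(n),f(n+1))$ sitting in a gap of $X_g$, i.e. containing at most one point.

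Since this counting needs care, here is a cleaner, gap-based choice. Let $X_g$ be defined so that consecutive elements satisfy $x_{i+1}>g(x_i)$; take $x_{i+1}=\max(g(x_i),x_i)+1$. Let $\Psi_-(f)(m)=f(j+2)$, where $j$ is least with $f(j)\ge m$. Suppose $g(m)>\Psi_-(f)(m)$ for infinitely many $m$. Since $g$ is increasing, for each such $m$ with $m\ge x_0$ choose $i$ with $x_i\le m<x_{i+1}$. Then $x_{i+1}>g(x_i)$, but we need this compared with $m$, not $x_i$. So I would instead use an a.e.-coding: put $\Psi_-(f)(m)=f(j_m+2)$ as above, and read the hypothesis at $m=x_i$. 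That requires infinitely many witnesses to lie in $X_g$, which is not guaranteed. The main obstacle is exactly this alignment between the witnessing $n$ for $g>\Psi_-(f)$ and the points of $X_g$.

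To fix it I use monotonicity. Let $\Psi_-(f)(m)=f(j_{m}+2)$ with $j_m$ least such that $f(j_m)\ge m$; this is nondecreasing, and strictly increasing after a harmless adjustment. Given a witness $m$ with $g(m)>\Psi_-(f)(m)$, take $i$ with $x_i\le m<x_{i+1}$. Since $x_{i+1}>g(x_i)$, define $X_g$ using $g$ applied one step ahead, $x_{i+1}>g(x_i+\ell)$, which is not possible uniformly. The robust alternative is to let $\Psi_+(g)$ have gaps $[x_i,x_{i+1})$ with $x_{i+1}>g(x_i+1)$ and replace $g$ by $g'(m)=g(m+1)$ inside the connection. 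Otherwise, if the witness $m$ falls where $x_{i+1}\le g(m)$, I cover it by noting that $(x_i,x_{i+1})\supseteq(m,\Psi_-(f)(m)]\supseteq[f(j_m),f(j_m+2))$ once $x_{i+1}>\Psi_-(f)(m)$. That interval region contains at most one point, $x_i$, so the interval $[f(j_m+1),f(j_m+2))$ misses $X_g$ entirely. Infinitely many witnesses give infinitely many such $n=j_m+1$. So the key step I would check is choosing the gap growth of $X_g$ to guarantee $x_{i+1}>\Psi_-(f)(m)$ for $m\in[x_i,x_{i+1})$. I would try $x_{i+1}>g(x_{i+1}-1)$, defining $X_g$ via the dominating iterate $x_{i+1}=\min\{t>x_i: t>g(t-1)\}$, which may fail; if it does, I would compose $g$ with a fast function to arrange it.
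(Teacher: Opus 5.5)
Your formulation of the Tukey condition and your derivation of $\dfrak(\Rbf_\exists^1)\le\bfrak$ and $\bfrak(\Rbf_\exists^1)\ge\dfrak$ are correct. However, the proposal never produces a working connection. The step you call ``the key step I would check'' is left open, and the route you suggest for it cannot work.

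For a witness $m\in[x_i,x_{i+1})$ of $g(m)>\Psi_-(f)(m)$, you want $x_{i+1}>\Psi_-(f)(m)$, so that $[f(j_m),f(j_m+2))$ lies inside a single gap of $X_g$. Nothing in the hypothesis gives this. $X_g$ depends only on $g$, and all you know is $\Psi_-(f)(m)<g(m)$. For $m=x_{i+1}-1$ you would need $x_{i+1}\ge g(x_{i+1}-1)$, which fails for every $g$ with $g(n)\ge n+2$ for almost all $n$. Your specific choice $x_{i+1}=\min\{t>x_i: t>g(t-1)\}$ is even undefined as soon as $g(0)>0$, since then $g(t-1)\ge t$ for all $t\ge1$. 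Replacing $g$ by a faster function does not help, because the obstruction sits at the right end of every gap. Your case split is also circular: the case ``$x_{i+1}\le g(m)$'' is exactly the one in which $x_{i+1}>\Psi_-(f)(m)$ is unavailable.

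The missing observation is that you are asking for too much. $\sqsubset_\exists^1$ tolerates one point per interval, so you do not need an $f$-interval that misses $X_g$, and two consecutive gaps suffice. Keep your second construction, $x_{i+1}=\max(g(x_i),x_i)+1$, and take $\Psi_-(f)(m)=f(j_m+2)+m$, which is strictly increasing. Monotonicity of $g$ gives $x_{i+2}>g(x_{i+1})>g(m)>f(j_m+2)$, while $f(j_m)\ge m\ge x_i$. Hence $[f(j_m),f(j_m+2))\subseteq[x_i,x_{i+2})$ meets $X_g$ in at most $\{x_i,x_{i+1}\}$. So one of the two $f$-intervals starting at $f(j_m)$ and $f(j_m+1)$ contains at most one point. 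Since $j_m\to\infty$ and there are infinitely many witnesses, $f\sqsubset_\exists^1 X_g$.

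The paper argues more directly, with $\Psi_-=\id$ and $\Psi_+(b)=\ran(b)$. Suppose from some $N$ on every $[f(n),f(n+1))$ contained two points of $\ran(b)$. Then the least index $s(n)$ of such a point satisfies $s(n+1)\ge s(n)+2$, so $s(n)>n$ eventually. That forces $b(n)<f(n)$ for almost all $n$, contradicting $b\not\le^* f$.
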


\begin{proof}
Define the maps:
\[
\Phi_-\colon \omega^{\uparrow \omega} \to \omega^{\uparrow \omega}, \quad \Phi_-(f) = f,
\]
\[
\Phi_+\colon \omega^{\uparrow \omega} \to [\omega]^\omega, \quad \Phi_+(b) = b[\omega] := \{ b(n) : n \in \omega \}.
\]
We verify the Tukey condition: for any $f, b \in \omega^{\uparrow \omega}$,
\[
\Phi_-(f) (\leq^*)^\perp b\; \Rightarrow\; f \sqsubset_\exists^1 \Phi_+(b).
\]
Assume $b\not\leq^* \Phi_-(f) $, i.e., $b \not\leq^* f$. This means $\exists^\infty n\; \, b(n) > f(n)$. We prove that $f \sqsubset_\exists^1 b[\omega]$. Suppose, for contradiction, that $\neg (f \sqsubset_\exists^1 b[\omega])$, i.e., $\exists N \, \forall n \geq N \, |[f(n), f(n+1)) \cap b[\omega]| \geq 2$. For each $n \geq N$, let
\[
s(n) = \min \{ j : b(j) \in [f(n), f(n+1)) \}.
\]
Because each interval $[f(n), f(n+1))$ contains at least two points of $b[\omega]$, we have $s(n+1) \geq s(n) + 2$, so $s(n) \geq s(N) + 2(n - N)$. Consequently, there exists $n_0$ such that $s(n) > n$ for all $n > n_0$. For such $n$, since $n < s(n)$, the minimality of $s(n)$ implies $b(j) < f(n)$ for all $j < s(n)$; in particular, $b(n) < f(n)$. This yields $b(n) < f(n)$ for all sufficiently large $n$, i.e., $b \leq^* f$, which contradicts the assumption $b \not\leq^* f$. Hence, $f \sqsubset_\exists^1 b[\omega]$.
Thus, $\Rbf_\exists^1 \leqT \Dbf^\perp$.
\end{proof}

To obtain the reverse inequalities, we relate $\Rbf_\exists^1$ to the interval inclusion relation introduced by Blass \cite{blass}.

\begin{definition}
Define the following relation on $\omega^{\uparrow\omega}$:
   \[ f \sqsubseteq g \quad\Longleftrightarrow\quad \forall^\infty n\in\omega\; \exists m\in\omega\; [f(m), f(m+1))\subseteq [g(n), g(n+1)).
   \]
Let $\Ior = \langle \omega^{\uparrow\omega}, \omega^{\uparrow\omega}, \sqsubseteq \rangle$. Blass~\cite{blass} proved that $\bfrak(\Ior)=\bfrak$ and $\dfrak(\Ior)=\dfrak$.
 \end{definition}

\begin{lemma}\label{a30}
For every $k \geq 1$, $\Ior^\perp \leqT \Rbf_\exists^k$. 
Consequently, $\bfrak(\Rbf_\exists^k) \leq \dfrak$ and $\dfrak(\Rbf_\exists^k) \geq \bfrak$.    
\end{lemma}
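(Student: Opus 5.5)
The plan is to build an explicit Tukey connection $(\Psi_-,\Psi_+)\colon\Ior^\perp\to\Rbf_\exists^k$, in the same spirit as the proof of \autoref{lem:forall}. Recall that $\Ior^\perp=\la\omega^{\uparrow\omega},\omega^{\uparrow\omega},\sqsubseteq^\perp\ra$, where $g\sqsubseteq^\perp f$ iff $f\not\sqsubseteq g$. So we need maps $\Psi_-\colon\omega^{\uparrow\omega}\to\omega^{\uparrow\omega}$ and $\Psi_+\colon[\omega]^\omega\to\omega^{\uparrow\omega}$ such that for all $g\in\omega^{\uparrow\omega}$ and $X\in[\omega]^\omega$,
\[
\Psi_-(g)\sqsubset_\exists^k X\;\Rightarrow\; \Psi_+(X)\not\sqsubseteq g.
\]

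I will take $\Psi_-$ to be the identity. For $\Psi_+$, let $X=\{x_0<x_1<\cdots\}$ be the increasing enumeration and set $\Psi_+(X)=f_X$ with $f_X(m)=x_{(k+1)m}$. This function is strictly increasing. Its key feature is that each interval $[f_X(m),f_X(m+1))$ contains exactly $k+1$ points of $X$, namely $x_{(k+1)m},\dots,x_{(k+1)m+k}$.

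To verify the Tukey condition, assume $g\sqsubset_\exists^k X$. Then there are infinitely many $n$ with $\bigl|[g(n),g(n+1))\cap X\bigr|\le k$. For any such $n$, no interval $[f_X(m),f_X(m+1))$ can be contained in $[g(n),g(n+1))$, since otherwise the latter would contain at least $k+1$ points of $X$. Hence it is not the case that for all but finitely many $n$ some $f_X$-interval sits inside the $n$-th $g$-interval. That is exactly $f_X\not\sqsubseteq g$, which means $g\sqsubseteq^\perp f_X$.

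For the consequences, use that $\Rbf\leqT\Rbf'$ implies $\bfrak(\Rbf')\le\bfrak(\Rbf)$ and $\dfrak(\Rbf)\le\dfrak(\Rbf')$, together with $\bfrak(\Ior^\perp)=\dfrak(\Ior)$ and $\dfrak(\Ior^\perp)=\bfrak(\Ior)$. Blass's theorem then gives $\bfrak(\Rbf_\exists^k)\le\bfrak(\Ior^\perp)=\dfrak(\Ior)=\dfrak$ and $\dfrak(\Rbf_\exists^k)\ge\dfrak(\Ior^\perp)=\bfrak(\Ior)=\bfrak$.

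There is no serious obstacle here. The only point needing care is the bookkeeping of the dual system, namely which coordinate of $\Ior^\perp$ feeds $\Psi_-$ and which feeds $\Psi_+$, and the direction of the resulting inequalities.
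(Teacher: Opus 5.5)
Your proposal is correct and matches the paper's proof. It uses the same identity map $\Psi_-$ and the same $\Psi_+(X)=f_X$ with $f_X(m)=x_{(k+1)m}$, exploiting that each $f_X$-interval contains exactly $k+1$ points of $X$. The only difference is cosmetic: the paper verifies the contrapositive ($f_X\sqsubseteq g$ forces almost every $g$-interval to contain more than $k$ points of $X$), whereas you argue directly, and you spell out the transfer of the cardinal inequalities via Blass's theorem, which the paper leaves implicit.
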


\begin{proof}
 Define maps 
\[ \Psi_-\colon \omega^{\uparrow\omega} \to \omega^{\uparrow\omega} \quad \text{and} \quad \Psi_+\colon [\omega]^\omega \to \omega^{\uparrow\omega} \]
such that for every $g \in \omega^{\uparrow\omega}$ and $X \in [\omega]^\omega$:
\[ \Psi_-(g) \sqsubset_\exists^k X \;\Rightarrow\; g \sqsubseteq^\perp \Psi_+(X).\]
By the definition of the dual relation $\sqsubseteq^\perp$, the condition $g \sqsubseteq^\perp \Psi_+(X)$ is equivalent to $\Psi_+(X) \not\sqsubseteq g$. Thus, we need to satisfy the contrapositive:
\[ \Psi_+(X) \sqsubseteq g \;\Rightarrow\; \neg(\Psi_-(g) \sqsubset_\exists^k X).\]

Let $\Psi_-(g) = g$ (the identity function).   For $X \in [\omega]^\omega$, let $e_X$ be the increasing enumeration of $X$. Define $\Psi_+(X) = f_X \in \omega^{\uparrow\omega}$ by
    \[ f_X(n) = e_X(n \cdot (k+1)).\]

Suppose $\Psi_+(X) \sqsubseteq g$, which means $f_X \sqsubseteq g$. By the definition of $\sqsubseteq$ we have
\[ \forall^\infty n\; \exists m\; [f_X(m), f_X(m+1)) \subseteq [g(n), g(n+1)).\]
Consider such an interval $[f_X(m), f_X(m+1))$. By the definition of $f_X$ we have
\[ [f_X(m), f_X(m+1)) = [e_X(m(k+1)), e_X((m+1)(k+1))). \]
This interval contains the elements $\{e_X(m(k+1)), e_X(m(k+1)+1), \dots, e_X(m(k+1)+k)\}$. There are exactly $k+1$ elements of $X$ in this interval.

Since $[f_X(m), f_X(m+1)) \subseteq [g(n), g(n+1))$, it follows that
\[ |[g(n), g(n+1)) \cap X| \geq |[f_X(m), f_X(m+1)) \cap X| = k+1>k,\]
thus, we have $|[g(n), g(n+1)) \cap X| > k$ for all but finitely many $n$. 

By the definition of $\sqsubset_\exists^k$, the condition $g \sqsubset_\exists^k X$ requires infinitely many $n$ such that $|[g(n), g(n+1)) \cap X| \leq k$. However, we just showed that for almost all $n$, the intersection has size $> k$. 
Therefore, we have $\neg(g \sqsubset_\exists^k X)$, i.e.,  $\neg(\Psi_-(g) \sqsubset_\exists^k X)$.   
\end{proof}

Combining \autoref{fac:bas}\ref{fac:basc}, \autoref{a31}, and \autoref{a30}, we immediately obtain the following corollary.

\begin{corollary}\label{a32}
For any $k\geq1$, we have $\bfrak=\dfrak(\Rbf_\exists^{k})$ and $\dfrak=\bfrak(\Rbf_\exists^{k})$. 
\end{corollary}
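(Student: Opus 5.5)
The corollary is obtained by chaining the Tukey connections already established, so the plan is to assemble the four inequalities. Tukey reductions compose, and $\Rbf\leqT\Rbf'$ gives $\bfrak(\Rbf')\le\bfrak(\Rbf)$ and $\dfrak(\Rbf)\le\dfrak(\Rbf')$. We also use that $\bfrak(\Dbf^\perp)=\dfrak$, $\dfrak(\Dbf^\perp)=\bfrak$, $\bfrak(\Ior^\perp)=\dfrak(\Ior)=\dfrak$ and $\dfrak(\Ior^\perp)=\bfrak(\Ior)=\bfrak$, the last two by Blass's theorem.

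\emph{Upper bound for $\dfrak(\Rbf_\exists^k)$ and lower bound for $\bfrak(\Rbf_\exists^k)$.} Fix $k\ge1$. By \autoref{fac:bas}\ref{fac:basc} applied with $1\le k$ (trivially when $k=1$), we have $\Rbf_\exists^k\leqT\Rbf_\exists^1$. By \autoref{a31}, $\Rbf_\exists^1\leqT\Dbf^\perp$. Composing the two connections gives $\Rbf_\exists^k\leqT\Dbf^\perp$, and therefore
\[
\dfrak(\Rbf_\exists^k)\le\dfrak(\Dbf^\perp)=\bfrak
\quad\text{and}\quad
\bfrak(\Rbf_\exists^k)\ge\bfrak(\Dbf^\perp)=\dfrak.
\]

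\emph{Reverse inequalities.} \autoref{a30} gives $\Ior^\perp\leqT\Rbf_\exists^k$ for every $k\ge1$. Hence
\[
\bfrak(\Rbf_\exists^k)\le\bfrak(\Ior^\perp)=\dfrak
\quad\text{and}\quad
\dfrak(\Rbf_\exists^k)\ge\dfrak(\Ior^\perp)=\bfrak.
\]
Combining the two paragraphs yields $\dfrak(\Rbf_\exists^k)=\bfrak$ and $\bfrak(\Rbf_\exists^k)=\dfrak$.

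There is no real obstacle here. The only point needing care is the direction of the monotonicity in \autoref{fac:bas}\ref{fac:basc}: enlarging $k$ weakens the requirement on $X$, so the system with the larger parameter reduces to the one with the smaller parameter. The identity maps witness this, because if $f\sqsubset_\exists^1 X$ then $f\sqsubset_\exists^k X$. This is exactly the direction needed to transfer \autoref{a31} from $k=1$ to arbitrary $k$.
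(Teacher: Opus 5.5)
Your proposal is correct and is exactly the paper's argument: the paper obtains \autoref{a32} by combining \autoref{fac:bas}\ref{fac:basc}, which gives $\Rbf_\exists^k\leqT\Rbf_\exists^1$, with \autoref{a31} ($\Rbf_\exists^1\leqT\Dbf^\perp$), \autoref{a30} ($\Ior^\perp\leqT\Rbf_\exists^k$), and Blass's values $\bfrak(\Ior)=\bfrak$, $\dfrak(\Ior)=\dfrak$. You also have the directions of the Tukey inequalities and of the monotonicity in $k$ right.
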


We now treat the remaining case $k=0$.

\begin{lemma}\label{a60}
    $\Rbf^0_\exists \leqT\Rbf^1_\exists$.
\end{lemma}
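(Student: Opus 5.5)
We need a Tukey connection $(\Psi_-,\Psi_+)\colon\Rbf^0_\exists\to\Rbf^1_\exists$. Concretely, we want $\Psi_-\colon\omega^{\uparrow\omega}\to\omega^{\uparrow\omega}$ and $\Psi_+\colon[\omega]^\omega\to[\omega]^\omega$ such that $\Psi_-(f)\sqsubset^1_\exists X$ implies $f\sqsubset^0_\exists\Psi_+(X)$. Here $f\sqsubset^0_\exists Y$ means that infinitely many intervals $[f(n),f(n+1))$ miss $Y$.

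The natural plan is to leave the intervals unchanged and thin out the set, turning ``at most one point'' into ``no point''. A first idea is to put $\Psi_-(f)=f$ and let $\Psi_+(X)$ be every other element of $X$. However, an interval with exactly one point of $X$ may still contain a point of the thinned set, so this does not work directly.

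Instead I would merge intervals pairwise. Set $\Psi_-(f)=f'$ with $f'(n)=f(2n)$, so $[f'(n),f'(n+1))=[f(2n),f(2n+2))$ is the union of two consecutive $f$-intervals. Let $\Psi_+(X)=X$.

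Suppose $f'\sqsubset^1_\exists X$, so for infinitely many $n$ the union $[f(2n),f(2n+1))\cup[f(2n+1),f(2n+2))$ contains at most one point of $X$. For each such $n$, at least one of the two halves is disjoint from $X$. Distinct $n$ give distinct halves, so infinitely many $f$-intervals miss $X$, that is, $f\sqsubset^0_\exists X=\Psi_+(X)$.

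It remains to check that $f'$ is strictly increasing, which is immediate since $f$ is. This completes the Tukey connection.

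There is no real obstacle here. The only point needing care was the failed first attempt of keeping $f$ fixed, which is why the intervals are merged on the $\omega^{\uparrow\omega}$ side instead. Combined with \autoref{a30} for $k=0$ (whose proof works verbatim with $f_X(n)=e_X(n)$) and \autoref{a31}, this gives $\bfrak(\Rbf^0_\exists)=\dfrak$ and $\dfrak(\Rbf^0_\exists)=\bfrak$.
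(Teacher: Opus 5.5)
Your proposal is correct and matches the paper's proof: it uses the same Tukey connection, $\Psi_-(f)(n)=f(2n)$ to merge consecutive pairs of intervals and $\Psi_+=\id_{[\omega]^\omega}$. It also rests on the same observation that one of the two halves must miss $X$. Your remark that distinct $n$ give distinct halves, which is what makes infinitely many $f$-intervals miss $X$, is a detail the paper leaves implicit.
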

\begin{proof}
Define $\Psi_-\colon\omega^{\uparrow\omega}\to\omega^{\uparrow\omega}$ by $\Psi_-(f)(n)=f(2n)$ (i.e., we glue together every pair of consecutive intervals), and put $\Psi_+=\id_{[\omega]^\omega}$. Assume $\exists^\infty n\;|[f(2n), f(2n+2))\cap X|\leq 1$. By the assumption there exist infinitely many pairs of intervals containing at most one point from $X$, therefore, one of the two intervals must have empty intersection with $X$, so $\exists^\infty n\; |[f(n), f(n+1))\cap X|=0$, i.e, $f\sqsubset^0_\exists X=\Psi_+(X)$.
\end{proof}

Together with \autoref{fac:bas}\ref{fac:basc}, \autoref{a32}, and \autoref{a60}, this yields the following consequence.

\begin{corollary}\label{a61}
    $\dfrak(\Rbf^0_\exists)=\bfrak$ and $\bfrak(\Rbf^0_\exists)=\dfrak$.
\end{corollary}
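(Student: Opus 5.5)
Corollary~\ref{a61} follows by sandwiching $\Rbf^0_\exists$ between two systems whose invariants are already known, and then reading off both inequalities. I would argue in three short steps.

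First, for the upper Tukey bound, Lemma~\ref{a60} gives $\Rbf^0_\exists\leqT\Rbf^1_\exists$. Tukey reductions transfer to the invariants as $\bfrak(\Rbf^0_\exists)\ge\bfrak(\Rbf^1_\exists)$ and $\dfrak(\Rbf^0_\exists)\le\dfrak(\Rbf^1_\exists)$. By Corollary~\ref{a32} with $k=1$, this yields $\bfrak(\Rbf^0_\exists)\ge\dfrak$ and $\dfrak(\Rbf^0_\exists)\le\bfrak$.

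Second, for the lower Tukey bound, I would use Fact~\ref{fac:bas}\ref{fac:basc} with $0<1$, which gives $\Rbf^1_\exists\leqT\Rbf^0_\exists$. This instance is legitimate even though the paper states the fact for $k\ge1$. The Tukey connection is the pair of identities, and it works because $|[f(n),f(n+1))\cap X|\le 0$ on infinitely many $n$ trivially implies $\le1$ on infinitely many $n$. This gives $\bfrak(\Rbf^0_\exists)\le\bfrak(\Rbf^1_\exists)=\dfrak$ and $\dfrak(\Rbf^0_\exists)\ge\dfrak(\Rbf^1_\exists)=\bfrak$.

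Third, combining the two pairs of inequalities gives $\bfrak(\Rbf^0_\exists)=\dfrak$ and $\dfrak(\Rbf^0_\exists)=\bfrak$. Alternatively, I could bypass the second step by citing Lemma~\ref{a30}, noting that its proof goes through verbatim for $k=0$ (take $f_X(n)=e_X(n)$). In that case, every eventual interval of $g$ contains an interval of $f_X$, hence at least one point of $X$, so $\neg(g\sqsubset^0_\exists X)$.

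There is no real obstacle here. The only point needing care is that the monotonicity fact, and equally Lemma~\ref{a30} if used instead, is invoked at $k=0$, which lies outside the range $k\ge1$ stated in the paper. I would therefore spell out the identity Tukey connection explicitly rather than just cite Fact~\ref{fac:bas}.
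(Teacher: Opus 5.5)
Your proposal is correct and is essentially the paper's argument. The paper obtains the corollary by combining Lemma~\ref{a60} ($\Rbf^0_\exists\leqT\Rbf^1_\exists$), the monotonicity Fact~\ref{fac:bas}\ref{fac:basc} applied at $0<1$, and Corollary~\ref{a32} for $k=1$. Your explicit identity Tukey connection at $k=0$, and the alternative via Lemma~\ref{a30} with $f_X=e_X$, only spell out a detail the paper leaves implicit.
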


This completes the proof of \autoref{mthmone}. We next consider a partition variant of the previous relations.

 \begin{definition}
 Let $\text{Part}(\omega)$ be the set of all infinite partitions $\mathcal{P} = \{X_i : i \in \omega\}$ of $\omega$ into finite sets.
\begin{enumerate}[label=\rm(\alph*)]
    \item For $f\in\omega^{\uparrow\omega}$ and $\Pwf\in\text{Part}(\omega)$, let
    \[ f \sqsubset_{\forall}^{\col,k} \mathcal{P} \; \iff\; \forall^\infty n \in \omega \ \bigl| \{ i \in \omega : [f(n), f(n+1)) \cap X_i \neq \emptyset \} \bigr| \le k, \]  
\[ f \sqsubset_{\exists}^{\col,k} \mathcal{P} \; \iff\; \exists^\infty n \in \omega \ \bigl| \{ i \in \omega : [f(n), f(n+1)) \cap X_i \neq \emptyset \} \bigr| \le k. \] 
    \item Define the relational system $\Rbf_{\iota}^{\col,k} = \langle \omega^{\uparrow\omega}, \text{Part}(\omega), \sqsubset_{\iota}^{\col,k} \rangle$\footnote{``col'' stands for color} for $\iota\in\{\forall,\exists\}$. 
\end{enumerate}
\end{definition}

The case $k=1$ is degenerate for the universal version, as shown by the following remark.

\begin{remark}
We show that $\bfrak(\Rbf_{\forall}^{\col,1})=2$. Let $f,g$ be such that $f(n)<g(n)<f(n+1)<g(n+1)$ for all $n\in\omega$. Assume there is a partition $\Pwf$ such that $f,g\sqsubset^1_\text{col} \Pwf$. Then there are $n_1, n_2$ such that for all $n\geq \max\{n_1, n_2\}$ exist $P_{n,f},P_{n,g}\in\Pwf$ such that $[f(n), f(n+1))\subseteq P_{n,f}$ and $[g(n), g(n+1))\subseteq P_{n,g}$. Since intervals $[f(n), f(n+1))$ and $[g(n), g(n+1))$ overlap, we must have $P_{n,f}=P_{n,g}$. Since intervals $[g(n), g(n+1))$ and $[f(n+1), f(n+2))$ overlap, we must have $P_{n,f}=P_{n,g}=P_{n+1, f}$ and so on. So we have that each $n$-th interval for $n\geq \max\{n_1, n_2\}$  of each of two functions has the same color, which is a contradiction with $\Pwf\subseteq [\omega]^{<\omega}$.   
\end{remark}

\begin{fact}\label{fct:partbd}
\ 
\begin{enumerate}[label=\rm(\arabic*)]
    \item $\Rbf_{\iota}^{\col,\ell}\leqT\Rbf_{\iota}^{\col,k}$ for $k<\ell$ and $\iota\in\{\forall,\exists\}$.
    \item  $\Rbf_{\exists}^{\col,k}\leqT\Rbf_{\forall}^{\col,k}$ for each $k\in\omega$. 
\end{enumerate}    
\end{fact}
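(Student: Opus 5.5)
Both items are proved by exhibiting identity Tukey connections. In each case take $\Psi_-=\id_{\omega^{\uparrow\omega}}$ and $\Psi_+=\id_{\text{Part}(\omega)}$, and check the required implication directly from the definitions of $\sqsubset_\iota^{\col,k}$. The argument is the same as for \autoref{fac:bas}.

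For (1), let $k<\ell$ and $\iota\in\{\forall,\exists\}$. We need that $f\sqsubset_\iota^{\col,k}\Pwf$ implies $f\sqsubset_\iota^{\col,\ell}\Pwf$, since then $(\id,\id)\colon\Rbf_\iota^{\col,\ell}\to\Rbf_\iota^{\col,k}$ is a Tukey connection. For each $n$, write $c_n(f,\Pwf)=\bigl|\{i\in\omega:[f(n),f(n+1))\cap X_i\neq\emptyset\}\bigr|$. Then $\{n: c_n\le k\}\subseteq\{n:c_n\le \ell\}$. Hence, if the first set is cofinite (case $\iota=\forall$) or infinite (case $\iota=\exists$), so is the second.

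For (2), fix $k$. We need that $f\sqsubset_\exists^{\col,k}\Pwf$ implies $f\sqsubset_\forall^{\col,k}\Pwf$, since then $(\id,\id)\colon\Rbf_\exists^{\col,k}\to\Rbf_\forall^{\col,k}$ is a Tukey connection. But stated this way the implication points in the wrong direction: a cofinite set of $n$ is infinite, so the automatic implication is $\forall^\infty\Rightarrow\exists^\infty$, i.e.\ $f\sqsubset_\forall^{\col,k}\Pwf\Rightarrow f\sqsubset_\exists^{\col,k}\Pwf$, and not the converse.

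This direction question is the only real obstacle, so I would pin down the convention carefully. In the Tukey connection $(\Psi_-,\Psi_+)\colon\Rbf\to\Rbf'$ the requirement is $\Psi_-(x)\sqsubset' y'\Rightarrow x\sqsubset\Psi_+(y')$, so the relation of the target $\Rbf'$ must be the one that implies the other. With $\Rbf=\Rbf_\exists^{\col,k}$ and $\Rbf'=\Rbf_\forall^{\col,k}$ the requirement is exactly $f\sqsubset_\forall^{\col,k}\Pwf\Rightarrow f\sqsubset_\exists^{\col,k}\Pwf$, which does hold. So $\Rbf_\exists^{\col,k}\leqT\Rbf_\forall^{\col,k}$ follows from the identity maps after all.

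The same reading fixes item (1). The needed implication there is $f\sqsubset_\iota^{\col,k}\Pwf\Rightarrow f\sqsubset_\iota^{\col,\ell}\Pwf$, which is the one verified above, since the condition with the smaller bound $k$ is the stronger one.
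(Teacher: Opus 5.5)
Your proof is correct and is the intended argument: the paper states this Fact without proof, and identity maps on both sides give the Tukey connections, because the target relation is the stronger one in each case. In (1) the smaller threshold $k$ implies the larger threshold $\ell$, and in (2) $\forall^\infty$ implies $\exists^\infty$, which is exactly the implication $\Psi_-(x)\sqsubset' y'\Rightarrow x\sqsubset\Psi_+(y')$ requires (you could simply delete the false start at the beginning of your treatment of (2), since your next paragraph resolves it correctly).
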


\begin{lemma}\label{lem1:partbd}
 $\Rbf_{\forall}^{\col,2} \leqT \Ior$. In particular, $\bfrak\leq \bfrak(\Rbf_{\forall}^{\col,2} )$ and $\dfrak\geq \dfrak(\Rbf_{\forall}^{\col,2} )$.
 \end{lemma}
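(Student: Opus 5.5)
The plan is to take $\Psi_-$ to be the identity and to let $\Psi_+$ send each $h$ to the partition of $\omega$ into the intervals determined by $h$. The claim to check is that whenever $f\sqsubseteq h$, eventually every interval of $f$ meets at most two intervals of $h$.

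\textbf{The maps.} Define $\Psi_-\colon\omega^{\uparrow\omega}\to\omega^{\uparrow\omega}$ as the identity. For $h\in\omega^{\uparrow\omega}$ let $\Psi_+(h)=\Pwf_h\in\text{Part}(\omega)$ be
\[
\Pwf_h=\{[0,h(0))\}\cup\set{[h(n),h(n+1))}{n\in\omega},
\]
discarding $[0,h(0))$ if it is empty. This is an infinite partition of $\omega$ into finite sets. We must verify that $f\sqsubseteq h$ implies $f\sqsubset_\forall^{\col,2}\Pwf_h$.

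\textbf{The key step.} Assume $f\sqsubseteq h$, and fix $N$ such that for every $n\ge N$ there is $m$ with $[f(m),f(m+1))\subseteq[h(n),h(n+1))$. Take $j$ large enough that $f(j)\ge h(N)$. Suppose, for contradiction, that $[f(j),f(j+1))$ meets at least three blocks of $\Pwf_h$. Since both the blocks and the interval are intervals, it meets three consecutive blocks $[h(n),h(n+1))$, $[h(n+1),h(n+2))$, $[h(n+2),h(n+3))$ with $n\ge N$. Because an interval meeting the outer two blocks contains the middle one, we get
\[
[h(n+1),h(n+2))\subseteq[f(j),f(j)+1)\cup(f(j),f(j+1)).
\]
More precisely, $[h(n+1),h(n+2))$ is contained in $[f(j),f(j+1))$, and $h(n+1)>f(j)$ because the interval also meets $[h(n),h(n+1))$.

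Since $n+1\ge N$, some $f$-interval $[f(m),f(m+1))$ lies inside $[h(n+1),h(n+2))$, and hence inside $[f(j),f(j+1))$. Distinct $f$-intervals are disjoint, so $m=j$. But then $f(j)=f(m)\ge h(n+1)>f(j)$, which is a contradiction. Therefore, for all large $j$, the interval $[f(j),f(j+1))$ meets at most $2$ blocks of $\Pwf_h$, that is, $f\sqsubset_\forall^{\col,2}\Psi_+(h)$.

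\textbf{Consequences.} The Tukey connection gives $\bfrak(\Rbf_\forall^{\col,2})\ge\bfrak(\Ior)$ and $\dfrak(\Rbf_\forall^{\col,2})\le\dfrak(\Ior)$. By Blass's result $\bfrak(\Ior)=\bfrak$ and $\dfrak(\Ior)=\dfrak$, which yields the stated inequalities.

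The only delicate point is the strictness in the key step: the middle $h$-block must sit strictly to the right of $f(j)$, so that the $f$-interval it contains cannot be $[f(j),f(j+1))$ itself. The hypothesis that $[f(j),f(j+1))$ also meets the earlier block $[h(n),h(n+1))$ is exactly what guarantees this.
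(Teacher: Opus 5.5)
Your proof is correct and is essentially the paper's argument: $\Psi_-$ is the identity, $\Psi_+$ is the partition into $h$-intervals, and an $f$-interval meeting three consecutive $h$-blocks would properly contain the middle block, which by $f\sqsubseteq h$ itself contains an $f$-interval, an impossibility. Your explicit use of $f(j)\ge h(N)$ and $h(n+1)>f(j)$ handles the strictness more cleanly than the paper does. Adding the block $[0,h(0))$ also fixes a small oversight, since the paper's $\mathcal{P}_g$ does not cover $[0,g(0))$ when $g(0)>0$.
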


\begin{proof}
     We construct a Tukey connection $(\Psi_-, \Psi_+)$, where
\[ \Psi_-\colon \omega^{\uparrow\omega} \to \omega^{\uparrow\omega} \quad \text{and} \quad \Psi_+\colon \omega^{\uparrow\omega} \to \text{Part}(\omega) \]
are such that
\[ \Psi_-(f) \sqsubseteq g \;\Rightarrow\; f \sqsubset_{\text{col}}^{2} \Psi_+(g). \]

Define
\begin{itemize}
    \item  $\Psi_-(f) = f$, and
    \item $\Psi_+(g) = \mathcal{P}_g$, where $\mathcal{P}_g = \{ [g(i), g(i+1)) : i < \omega \}$.
\end{itemize}
Assume $\Psi_-(f)=f\sqsubset g$, i.e., $\forall^\infty m\; \exists n_m\;[f(n_m), f(n_m+1))\subseteq [g(m), g(m+1))$. Let $M\in\omega$ be such that for all $m\geq M$ there is $n_m$ with $[f(n_m), f(n_m+1))\subseteq [g(m), g(m+1))$. Clearly, $n_m\geq n_M$ for all $m\geq M$. Fix any $n\geq n_M$. Then either $n=n_m$ for some $m\geq M$ (i.e. $[f(n), f(n+1))\subseteq [g(m), g(m+1))$ and consequently $[f(n), f(n+1))$ has nonempty intersection only with $[g(m), g(m+1))$) or $[f(n), f(n+1))\subset[g(m), g(m+1))\cup[g(m+1), g(m+2))$ for some $n\geq N$. It cannot happen that $[f(n),f(n+1))$ has a nonempty intersection with three (or more) intervals constituted of $g$ since then we should have $[g(m), g(m+1))\subsetneq [f(n), f(n+1))$ for some $m\geq M$ (the middle interval of the three mentioned) and that is a contradiction with the assumption that $[g(m), g(m+1))$ contains at least one interval $[f(j), f(j+1))$. Hence, all but finitely many $[f(n), f(n+1))$ have a nonempty intersection with at most two intervals constituted by $g$, i.e., $f\sqsubset^2_\text{col} \Pwf_g$.
\end{proof}

\begin{lemma}\label{lem2:partbd}
 For any $k\geq2$, we have $\Rbf_\iota^k\leqT \Rbf_{\iota}^{\col,k}$ for $\iota\in\{\forall,\exists\}$. In particular, $\bfrak(\Rbf_{\iota}^{\col,k})\leq\bfrak(\Rbf_\iota^k)$ and $\dfrak(\Rbf_\iota^k)\leq\dfrak(\Rbf_{\iota}^{\col,k})$.
\end{lemma}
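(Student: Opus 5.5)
The plan is to build a Tukey connection $(\Psi_-,\Psi_+)\colon\Rbf_\iota^k\to\Rbf_\iota^{\col,k}$ that works for both quantifiers at once. Since $\Psi_-\colon\omega^{\uparrow\omega}\to\omega^{\uparrow\omega}$ and $\Psi_+\colon\text{Part}(\omega)\to[\omega]^\omega$, we need
\[
\Psi_-(f)\sqsubset_\iota^{\col,k}\Pwf\;\Rightarrow\; f\sqsubset_\iota^k\Psi_+(\Pwf).
\]
We take $\Psi_-$ to be the identity. For $\Psi_+$ we choose a selector of the partition: for $\Pwf=\set{X_i}{i\in\omega}$ (with the $X_i$ nonempty), put
\[
\Psi_+(\Pwf)=X_\Pwf:=\set{\min X_i}{i\in\omega}.
\]
This set is infinite because the blocks are pairwise disjoint and there are infinitely many of them. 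So $\Psi_+$ indeed lands in $[\omega]^\omega$.

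The key observation is a pointwise inequality valid for every interval $I=[f(n),f(n+1))$:
\[
|I\cap X_\Pwf|\le\bigl|\set{i\in\omega}{I\cap X_i\neq\emptyset}\bigr|.
\]
Every point of $I\cap X_\Pwf$ has the form $\min X_i$ for a unique $i$. That block $X_i$ then meets $I$. Distinct points come from distinct blocks, so the map sending a point to its block index is an injection into the set of blocks meeting $I$.

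Now suppose $f\sqsubset_\iota^{\col,k}\Pwf$. The set of $n$ for which the interval meets at most $k$ blocks is cofinite when $\iota=\forall$, and infinite when $\iota=\exists$. By the inequality, for each such $n$ we get $|[f(n),f(n+1))\cap X_\Pwf|\le k$. Hence $f\sqsubset_\iota^k X_\Pwf$ with the same quantifier, which completes the Tukey connection.

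The cardinal inequalities then follow from the general facts that $\Rbf\leqT\Rbf'$ implies $\bfrak(\Rbf')\le\bfrak(\Rbf)$ and $\dfrak(\Rbf)\le\dfrak(\Rbf')$.

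I do not expect a real obstacle; the only point to check is that the selector is infinite. The argument does not actually use $k\ge2$, so the restriction in the statement is presumably only there to avoid the degenerate case $k=1$ treated in the preceding remark.
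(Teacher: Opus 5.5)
Your proposal is correct and follows the paper's proof: identity for $\Psi_-$, the set of block minima for $\Psi_+$, and the observation that an interval contains at most as many block minima as blocks it meets. Your version treats both quantifiers at once, whereas the paper writes out only $\iota=\forall$ and calls the $\exists$ case analogous. You are also right that the hypothesis $k\ge2$ is never used.
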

\begin{proof}
    Let $\Psi_-=\id_{\omega^{\uparrow\omega}}$ and for $\Pwf=\set{P_n}{n\in\omega}\in\text{Part}(\omega)$ let \[\Psi_+(\Pwf)=\set{\min(P_n)}{n\in\omega}.\]
    Clearly, $\Psi_+(\Pwf)\in [\omega]^\omega$ for all $\Pwf\in \text{Part}(\omega)$. 

    It remains to show that \[f\sqsubset_{\iota}^{\col,k} \Pwf\;\Rightarrow\; f\sqsubset^k_\iota\Psi_+(\Pwf).\]
    We only present the proof for $\iota=\forall$, as the remaining case is analogous. Assume $f\sqsubset_{\forall}^{\col,k} \Pwf$, i.e., there is $N\in\omega$ such that  $\forall n\geq N\;\exists F_n\in [\omega]^{\leq k}\; [f(n), f(n+1))\subseteq \bigcup_{j\in F_n} P_j$. Fix $n\geq N$. Then it follows immediately from $[f(n), f(n+1))\subseteq \bigcup_{j\in F_n} P_j$ that $[f(n), f(n+1))$ contains at most $|F_n|\leq k$ minimums of elements of $\Pwf$. Therefore, $\forall^\infty n\;|[f(n), f(n+1))\cap \Psi_+(\Pwf)|\leq k$, i.e., $f\sqsubset^k_\forall \Psi_+(\Pwf)$.
\end{proof}

As an immediate consequence of \autoref{lem1:partbd} and \autoref{lem2:partbd}, we obtain the following result, establishing the first part of \autoref{mthmthird}.

 \begin{corollary}\label{a65}
 For any $k\geq2$, we have $\bfrak= \bfrak(\Rbf_{\forall}^{\col,2})=\bfrak(\Rbf_{\forall}^{\col,k})$ and $\dfrak= \dfrak(\Rbf_{\forall}^{\col,2} )=\dfrak(\Rbf_{\forall}^{\col,k} )$.   
\end{corollary}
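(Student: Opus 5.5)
The corollary is obtained by chaining the Tukey reductions already established, in two directions.

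\emph{Lower bound for $\bfrak$ and upper bound for $\dfrak$.} By \autoref{fct:partbd}(1), $\Rbf_\forall^{\col,k}\leqT\Rbf_\forall^{\col,2}$ for every $k\geq2$. By \autoref{lem1:partbd}, $\Rbf_\forall^{\col,2}\leqT\Ior$. Transitivity of $\leqT$ gives $\Rbf_\forall^{\col,k}\leqT\Ior$. A Tukey connection $\Rbf\leqT\Rbf'$ yields $\bfrak(\Rbf')\leq\bfrak(\Rbf)$ and $\dfrak(\Rbf)\leq\dfrak(\Rbf')$. Combined with Blass's result $\bfrak(\Ior)=\bfrak$ and $\dfrak(\Ior)=\dfrak$, this gives
\[
\bfrak\leq\bfrak(\Rbf_\forall^{\col,2})\leq\bfrak(\Rbf_\forall^{\col,k})
\quad\text{and}\quad
\dfrak(\Rbf_\forall^{\col,k})\leq\dfrak(\Rbf_\forall^{\col,2})\leq\dfrak.
\]

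\emph{Reverse inequalities.} By \autoref{lem:forall}, $\Dbf\leqT\Rbf_\forall^{k}$, and by \autoref{lem2:partbd} with $\iota=\forall$, $\Rbf_\forall^k\leqT\Rbf_\forall^{\col,k}$. Hence $\Dbf\leqT\Rbf_\forall^{\col,k}$ for every $k\geq2$, including $k=2$. By \autoref{b-d}, this gives $\bfrak(\Rbf_\forall^{\col,k})\leq\bfrak$ and $\dfrak\leq\dfrak(\Rbf_\forall^{\col,k})$.

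Combining the two chains squeezes both $\bfrak(\Rbf_\forall^{\col,2})$ and $\bfrak(\Rbf_\forall^{\col,k})$ to $\bfrak$, and both $\dfrak(\Rbf_\forall^{\col,2})$ and $\dfrak(\Rbf_\forall^{\col,k})$ to $\dfrak$.

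The only point needing care is the direction of the reductions. \autoref{lem1:partbd} gives one inequality in each pair and the chain through $\Dbf$ gives the other, so every reduction has to be oriented consistently. Once the rule $\Rbf\leqT\Rbf'\Rightarrow\bfrak(\Rbf')\leq\bfrak(\Rbf),\ \dfrak(\Rbf)\leq\dfrak(\Rbf')$ is applied uniformly, there is no real obstacle.
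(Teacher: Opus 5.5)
Your proof is correct and takes essentially the paper's route. The paper also obtains the corollary by combining \autoref{lem1:partbd}, used with Blass's values for $\Ior$ and the monotonicity in \autoref{fct:partbd}, with \autoref{lem2:partbd}, used with the known values $\bfrak(\Rbf_\forall^k)=\bfrak$ and $\dfrak(\Rbf_\forall^k)=\dfrak$; your only difference is citing \autoref{lem:forall} directly instead of \autoref{mthmone}, which amounts to the same thing.
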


For the existential counterpart, we first isolate a simple combinatorial observation.

\begin{fact}\label{Nintervals}
Let $I, I_0,\dotso, I_{N-1}$ be integer intervals such that $I_0,\dotso, I_{N-1}$ are mutually disjoint and $I\cap I_j\neq \emptyset$ for all $j<N$. Then $I$ contains at least $N-2$ intervals from $\{I_0,\dotso, I_{N-1}\}$ as subsets.
\end{fact}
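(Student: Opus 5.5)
Order the intervals $I_0,\dots,I_{N-1}$ from left to right. This is possible since they are pairwise disjoint integer intervals, so after reindexing we may assume $\max I_j<\min I_{j+1}$ for all $j<N-1$. If $N\le 2$ there is nothing to prove, so assume $N\ge 3$. The claim will follow once we show that every "middle" interval $I_j$ with $0<j<N-1$ is contained in $I$. That gives at least $N-2$ intervals from the family sitting inside $I$.

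To show this, fix $0<j<N-1$. Since $I\cap I_0\neq\emptyset$, choose $a\in I\cap I_0$. Since $I\cap I_{N-1}\neq\emptyset$, choose $b\in I\cap I_{N-1}$. By the left-to-right ordering, every $x\in I_j$ satisfies
\[
a\le \max I_0<\min I_j\le x\le \max I_j<\min I_{N-1}\le b.
\]
Hence $a<x<b$. Now $I$ is an integer interval, i.e.\ convex in $\omega$ (or in $\mathbb{Z}$), and $a,b\in I$, so $x\in I$. Since $x\in I_j$ was arbitrary, $I_j\subseteq I$.

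The only point needing care is the reindexing step. It relies on disjoint intervals being linearly ordered by "lies entirely to the left of": for disjoint convex sets $J,J'$, either $\max J<\min J'$ or $\max J'<\min J$. This holds because otherwise their ranges overlap and convexity forces a common point. One should also note that the $I_j$ must be nonempty, which is automatic from $I\cap I_j\neq\emptyset$. No earlier results of the paper are needed. The argument is elementary, and the bound $N-2$ is sharp because the two extreme intervals may stick out of $I$ on either side.
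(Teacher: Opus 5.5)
Your proof is correct. The paper states this Fact without any proof, and your argument is the natural justification: sort the pairwise disjoint intervals from left to right, take points of $I$ in the two extreme ones, and use convexity of $I$ to trap every middle interval. The only tacit assumption is that the $\max$ and $\min$ you invoke exist. This is harmless, because the Fact is only applied to bounded intervals (the sets $A_i$ and $[f(n),f(n+1))$ in \autoref{lem3:partbd}), and in general the left-to-right ordering can be stated as ``every element of $J$ lies below every element of $J'$''.
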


\begin{lemma}\label{lem3:partbd}
    $\Rbf_{\exists}^{\col,2}\leqT\Ior^\perp$.
\end{lemma}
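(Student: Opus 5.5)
The plan is to build a Tukey connection $(\Psi_-,\Psi_+)\colon \Rbf_{\exists}^{\col,2}\to\Ior^\perp$ by reusing the interval partition from the proof of \autoref{lem1:partbd}. Here $\Psi_-\colon\omega^{\uparrow\omega}\to\omega^{\uparrow\omega}$ and $\Psi_+\colon\omega^{\uparrow\omega}\to\text{Part}(\omega)$. I take $\Psi_-(f)=f$ and $\Psi_+(h)=\Pwf_h$, where
\[
\Pwf_h=\{[0,h(0))\}\cup\set{[h(i),h(i+1))}{i\in\omega},
\]
discarding $[0,h(0))$ if it is empty. This is an infinite partition of $\omega$ into finite sets, so $\Pwf_h\in\text{Part}(\omega)$.

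The Tukey condition to verify is
\[
f\sqsubseteq^\perp h\;\Rightarrow\; f\sqsubset_{\exists}^{\col,2}\Pwf_h .
\]
By definition of the dual, $f\sqsubseteq^\perp h$ means $h\not\sqsubseteq f$. So I prove the contrapositive: if $\neg(f\sqsubset_{\exists}^{\col,2}\Pwf_h)$, then $h\sqsubseteq f$.

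Assume $\neg(f\sqsubset_{\exists}^{\col,2}\Pwf_h)$. Then there is $N$ such that for every $n\ge N$ the interval $I=[f(n),f(n+1))$ meets at least three blocks of $\Pwf_h$. All blocks are pairwise disjoint integer intervals. Choose three of the blocks meeting $I$ and apply \autoref{Nintervals} with $N=3$. It shows that $I$ contains at least $3-2=1$ of these blocks as a subset.

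If the contained block is the initial block $[0,h(0))$, use the ordering instead. The three blocks meeting $I$ are consecutive in order, and $I$ is an interval, so $I$ also contains the middle block. That middle block is a genuine interval $[h(m),h(m+1))$.

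Hence for all $n\ge N$ there is $m$ with $[h(m),h(m+1))\subseteq[f(n),f(n+1))$. This is exactly $h\sqsubseteq f$, which completes the Tukey connection.

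There is no real obstacle here. The only points needing care are the bookkeeping with the initial block $[0,h(0))$ and checking the direction of the dual relation, both handled above.
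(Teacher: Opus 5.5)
Your proof is correct and follows essentially the same route as the paper: the identity for $\Psi_-$, the interval partition induced by $h$ for $\Psi_+$, and \autoref{Nintervals} to place a block of $\Pwf_h$ inside $[f(n),f(n+1))$. The differences are cosmetic. The paper merges the initial segment into the first block $A_0=[0,g(1))$ and argues directly from the infinitely many $n$ witnessing $g\not\sqsubseteq f$, whereas you keep $[0,h(0))$ separate and argue by contrapositive. Your middle-block remark in fact always applies (the three chosen blocks need not be consecutive), so it disposes of the initial block cleanly.
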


\begin{proof}
    Let $f, g \in \omega^{\uparrow\omega}$. We define $\Psi_-(f) = f$ and $\Psi_+(g)=\Pwf$, where $\mathcal{P} = \{A_i : i < \omega\}$ is defined as follows:
\begin{itemize}
    \item $A_0 = [0, g(1))$,
    \item $A_i = [g(i), g(i+1))$ for $i \ge 1$.
\end{itemize}
This forms a partition of $\omega$ into finite, disjoint sets.

Assume $\Psi_{-}(f)\not\sqsupseteq g$, that is, there exists an infinite set $X \subseteq \omega$ such that for all $n \in X$,
\[ \forall m \; [g(m), g(m+1)) \not\subseteq [f(n), f(n+1)).\]
We need to show that for each $n \in X$, the interval $I_n = [f(n), f(n+1))$ intersects at most two sets $A_i$. Let $n \in X$. Suppose for the sake of contradiction that $I_n$ intersects $N\geq 3$ partition sets $A_{i_1},\dotso,A_{i_N}$. Note that these sets are intervals by the definition. It follows from \autoref{Nintervals} that $[f(n), f(n+1))$ contain at least $N-2\geq 3-2=1$ interval, say $A_{i_j}$. Since $A_{i_j}=[g(i_j), g(i_j+1))$, we get the contradiction with the assumption $n\in X$.
\end{proof}

Combining \autoref{lem2:partbd} and \autoref{lem3:partbd}, we obtain the following corollary, which completes \autoref{mthmthird}.

 \begin{corollary}\label{a66}
 For any $k\geq2$, we have $\bfrak= \dfrak(\Rbf_{\exists}^{\col,2})=\dfrak(\Rbf_{\exists}^{\col,k})$ and $\dfrak= \bfrak(\Rbf_{\exists}^{\col,2} )=\bfrak(\Rbf_{\exists}^{\col,k} )$.   
\end{corollary}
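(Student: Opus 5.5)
We need $\bfrak=\dfrak(\Rbf_{\exists}^{\col,k})$ and $\dfrak=\bfrak(\Rbf_{\exists}^{\col,k})$ for every $k\ge2$. The plan is to sandwich the colored existential systems between two systems whose invariants are already known, and then use the monotonicity in $k$ from \autoref{fct:partbd} to pass from $k=2$ to arbitrary $k\ge2$.

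\textbf{Upper Tukey bound.} By \autoref{lem3:partbd} we have $\Rbf_{\exists}^{\col,2}\leqT\Ior^\perp$. By \autoref{fct:partbd}(1), $\Rbf_{\exists}^{\col,k}\leqT\Rbf_{\exists}^{\col,2}$ for every $k\ge2$. Blass's theorem gives $\bfrak(\Ior^\perp)=\dfrak(\Ior)=\dfrak$ and $\dfrak(\Ior^\perp)=\bfrak(\Ior)=\bfrak$. Tukey reductions make the bounding number increase and the dominating number decrease along $\leqT$, so for all $k\ge2$ we get
\[
\dfrak\le\bfrak(\Rbf_{\exists}^{\col,2})\le\bfrak(\Rbf_{\exists}^{\col,k})
\quad\text{and}\quad
\dfrak(\Rbf_{\exists}^{\col,k})\le\dfrak(\Rbf_{\exists}^{\col,2})\le\bfrak.
\]

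\textbf{Lower Tukey bound.} By \autoref{lem2:partbd} with $\iota=\exists$, we have $\Rbf_\exists^k\leqT\Rbf_{\exists}^{\col,k}$. Hence $\bfrak(\Rbf_{\exists}^{\col,k})\le\bfrak(\Rbf_\exists^k)$ and $\dfrak(\Rbf_\exists^k)\le\dfrak(\Rbf_{\exists}^{\col,k})$. \autoref{a32} computes these: $\bfrak(\Rbf_\exists^k)=\dfrak$ and $\dfrak(\Rbf_\exists^k)=\bfrak$. This yields $\bfrak(\Rbf_{\exists}^{\col,k})\le\dfrak$ and $\dfrak(\Rbf_{\exists}^{\col,k})\ge\bfrak$.

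\textbf{Conclusion.} Combining the two directions, all the inequalities collapse to equalities, which proves the corollary for $k=2$ and for general $k\ge2$ simultaneously. There is no genuine obstacle here. The only care needed is to track the directions correctly: which Tukey reduction bounds which invariant, and that dualizing $\Ior$ swaps $\bfrak$ and $\dfrak$.
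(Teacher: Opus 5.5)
Your proposal is correct and is essentially the paper's argument: the paper combines the upper reduction $\Rbf_{\exists}^{\col,2}\leqT\Ior^\perp$ of \autoref{lem3:partbd}, extended to all $k\ge2$ via \autoref{fct:partbd}, with the lower reduction $\Rbf_\exists^k\leqT\Rbf_{\exists}^{\col,k}$ of \autoref{lem2:partbd} and the values from \autoref{a32}. Your tracking of which invariant moves in which direction along $\leqT$, and of the swap of $\bfrak$ and $\dfrak$ under dualizing $\Ior$, is accurate.
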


We now turn to the corresponding variants.

Denote by $[\omega]^*$ the family of infinite co-infinite subsets of $\omega$ and denote
\[\omega^\omega_{>k}=\{f\in\omega^{\uparrow\omega}\colon \forall n\; |[\,f(n),f(n+1))|> k\}.\] 
Note that $\omega^\omega_{>0}=\omega^{\uparrow\omega}$.

\begin{definition}
Let $k >0$.
  \begin{enumerate}[label=\rm(\alph*)]
    \item For $f \in \omega^\omega_{>k}$ and $X \in [\omega]^*$, we define 
\[f\;\sqsubset_\forall^{k}\;X\; \iff\;\forall^\infty n\;\bigl|\,[\,f(n),f(n+1))\cap X\bigr|\le k,\]
\[f\;\sqsubset_\exists^{k}\;X\; \iff\;\exists^\infty n\;\bigl|\,[\,f(n),f(n+1))\cap X\bigr|\le k.\]
   \item Define the relational system $\Lbf_\iota^k=\la\omega^\omega_{>k},[\omega]^*,\sqsubset_\iota^k\ra$ for $\iota\in\{\forall,\exists\}$. 
\end{enumerate}  
\end{definition}

The case $k=0$ is exceptional.

\begin{remark}
Notice that for $k=0$, $\dfrak(\Lbf_\forall^0)$ is not well-defined (or ``$\infty$'') because no $X \in [\omega]^*$ can satisfy the relation; and $\bfrak(\Lbf_\forall^0) = 1$ because the negation of the relation is always satisfied for any infinite set.   
\end{remark}

\begin{fact}\label{fac:basL}
\ 
    \begin{enumerate}[label=\rm(\arabic*)]
        \item\label{fac:basLc}  If $k<\ell$ then $\Lbf^\ell_\iota\leqT\Lbf^k_\iota$ for $\iota\in\{\forall,\exists\}$. 
         \item\label{fac:basLb} For each $k\in\omega$, we have $\Lbf^k_\exists\leqT\Lbf^k_\forall$.
         \item $\Rbf^k_\iota\leqT\Lbf^k_\iota$ for $\iota\in\{\forall,\exists\}$.
    \end{enumerate}
\end{fact}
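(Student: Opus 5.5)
All three items will be proved by writing down explicit Tukey connections. In each case the map on the $[\omega]^*$ side is the identity or an inclusion, so almost all the work is in choosing $\Psi_-$. Throughout, $\Psi_-$ goes from the first coordinate of the smaller system to that of the larger one, and $\Psi_+$ goes back on the second coordinates.

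For \ref{fac:basLc}, let $k<\ell$. Then $\omega^\omega_{>\ell}\subseteq\omega^\omega_{>k}$, so I take $\Psi_-\colon\omega^\omega_{>\ell}\to\omega^\omega_{>k}$ to be the inclusion and $\Psi_+=\id_{[\omega]^*}$. If $f\sqsubset^k_\iota X$, then the intervals $[f(n),f(n+1))$ contain at most $k\le\ell$ points of $X$, eventually or infinitely often according to $\iota$. Hence $f\sqsubset^\ell_\iota X$, which is exactly the Tukey condition.

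For \ref{fac:basLb}, both maps are identities. If $f\sqsubset^k_\forall X$, the bound $|[f(n),f(n+1))\cap X|\le k$ holds for all but finitely many $n$, so in particular for infinitely many $n$. Thus $f\sqsubset^k_\exists X$.

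For the third item, $\Psi_+\colon[\omega]^*\to[\omega]^\omega$ is the inclusion, since every infinite co-infinite set is infinite. The only real point of the whole fact is that an arbitrary $f\in\omega^{\uparrow\omega}$ must be sent into $\omega^\omega_{>k}$, which is done by coarsening its interval partition. I set
\[
\Psi_-(f)(n)=f\bigl((k+1)n\bigr).
\]
This function is strictly increasing. Each of its intervals $[f((k+1)n),f((k+1)(n+1)))$ is the union of the $k+1$ consecutive intervals $[f(j),f(j+1))$ with $(k+1)n\le j<(k+1)(n+1)$. Each of these has length at least $1$, so the coarse interval has length at least $k+1>k$, and hence $\Psi_-(f)\in\omega^\omega_{>k}$.

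Now suppose $\Psi_-(f)\sqsubset^k_\iota X$. Whenever a coarse interval contains at most $k$ points of $X$, every fine interval $[f(j),f(j+1))$ inside it also contains at most $k$ points of $X$. For $\iota=\forall$: eventually all coarse intervals satisfy the bound, so all $j$ beyond some point satisfy it, and $f\sqsubset^k_\forall X$. For $\iota=\exists$: distinct coarse indices $n$ give pairwise disjoint blocks of fine indices, so infinitely many good coarse intervals yield infinitely many good fine intervals, and $f\sqsubset^k_\exists X$. No step is a serious obstacle; the only thing to get right is the choice of the coarsening in the last item.
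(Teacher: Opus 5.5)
Your proof is correct. The paper states this Fact without proof, and your Tukey connections are the natural ones: inclusions and identities for the first two items, and for the third the coarsening $f\mapsto f((k+1)\,\cdot\,)$ together with the inclusion $[\omega]^*\subseteq[\omega]^\omega$, which is the $k$-fold version of the map $\Psi_-(f)(n)=f(2n)$ the paper uses in the $\succeq_\mathrm{T}$ direction of \autoref{lem:RsL}.
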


The next two lemmas show that $\Lbf^k_\forall$ is Tukey equivalent to $\Dbf$.

\begin{lemma}\label{lem:forleq}
For any $k >0$, we have $\Lbf_\forall^k\leqT\mathbf{D}$.     
\end{lemma}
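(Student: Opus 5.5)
We need a Tukey connection $(\Psi_-,\Psi_+)\colon \Lbf_\forall^k\to\Dbf$, that is, maps $\Psi_-\colon\omega^\omega_{>k}\to\omega^{\uparrow\omega}$ and $\Psi_+\colon\omega^{\uparrow\omega}\to[\omega]^*$ such that $\Psi_-(f)\le^* g$ implies $f\sqsubset_\forall^k\Psi_+(g)$. This is the $\Lbf$-analogue of the inequality $\Rbf^1_\forall\leqT\Dbf$ underlying Vojt\'a\v{s}'s result. The only new features are that the target set must be co-infinite and the domain consists of partitions with long intervals. Neither should cause trouble, since a very sparse set is automatically co-infinite.

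We take $\Psi_-(f)=f$. For $g\in\omega^{\uparrow\omega}$, we define a sparse set $Y_g=\set{y_j}{j\in\omega}$ recursively by $y_0=g(0)$ and $y_{j+1}=g(y_j+1)+1$. Any set of this form with $y_{j+1}>g(y_j+1)$ works, and $Y_g$ is obviously infinite. It is co-infinite because $y_{j+1}>y_j+1$, so each $y_j+1\notin Y_g$. We put $\Psi_+(g)=Y_g$.

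Now assume $f\le^* g$, say $f(m)\le g(m)$ for all $m\ge N$. We aim to show that each interval $[f(n),f(n+1))$ with $n$ large contains at most one point of $Y_g$, which gives $f\sqsubset^1_\forall Y_g$ and hence $f\sqsubset^k_\forall Y_g$ since $k\ge1$.

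Suppose two consecutive points $y_j<y_{j+1}$ both lie in $[f(n),f(n+1))$ with $n\ge N$. Since $f$ is strictly increasing, $f(m)\ge m$, so $n\le f(n)\le y_j$, and therefore $n+1\le y_j+1$. Then
\[
y_{j+1}<f(n+1)\le g(n+1)\le g(y_j+1)<y_{j+1},
\]
a contradiction. Only finitely many $n$ fall below $N$, so the relation holds for all but finitely many $n$.

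The only step needing care is this key inequality, where the bound $f(m)\ge m$ lets us compare the index $n+1$ with the point $y_j+1$. That is precisely why $Y_g$ is built using $g$ evaluated at $y_j+1$. The hypothesis $|[f(n),f(n+1))|>k$ on the domain is not used at all in this direction.
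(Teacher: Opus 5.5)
Your proof is correct and follows essentially the same idea as the paper. In both arguments, $\Psi_+$ sends the dominating function to the range of a sparse recursive sequence whose next term exceeds that function evaluated just past the current term, which forces at most one point in each late interval. The only difference is where the index shift sits: the paper puts it in $\Psi_-(h)(n)=h(n+1)$ together with $f'(j+1)>f(f'(j))+1$, while you keep $\Psi_-=\mathrm{id}$ and build it into $y_{j+1}=g(y_j+1)+1$, which spares you the paper's extra threshold $N_1$.
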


\begin{proof}
 We explicitly define maps $\Psi_-\colon \omega^\omega_{>k} \to \omega^{\uparrow\omega}$ and $\Psi_+\colon \omega^{\uparrow\omega} \to [\omega]^*$ as follows. For $f\in\omega^{\uparrow\omega}$ define $f'\in\omega^{\uparrow\omega}$ such that for all $n\in\omega$, \[f(f'(n))+1<f'(n+ 1).\]
Then put $\Psi_+(f)=\ran(f')\in[\omega]^*$. On the other hand, for $h\in \omega^\omega_{>k}$ put $\Psi_-(h)(n)=h(n+1)$.

It remains to prove that if $\Psi_-(h)\leq^* f$, then $h\sqsubset^k_\forall \Psi_+(f)$. Assume $\Psi_-(h) \leq^* f$. Then there exists $N_0$ such that for all $n \ge N_0$, $h(n+1) \le f(n).$ Because $f'$ is strictly increasing and tends to infinity, we can choose $N_1$ large enough so that $f'(N_1) \ge N_0$. Then for all $j \ge N_1$,

\[
h(f'(j)+1) \le f(f'(j)).
\]
Using the defining property of $f'$, i.e., $f(f'(j)) + 1 < f'(j+1)$, we obtain
\[
h(f'(j)+1) \le f(f'(j)) < f'(j+1) - 1 < f'(j+1).
\tag{1}
\]
Thus, for all $j \ge N_1$,
\[
h(f'(j)+1) < f'(j+1).
\tag{2}
\]
Let $X = \Psi_+(f) = \operatorname{ran}(f')$. Consider an interval $I_n = [h(n), h(n+1))$ with $n$ large enough so that  $n>f'(N_1)+1$. This ensures that if $h(n) \le f'(j)$ for some $j$, then $j \ge N_1$ (since $h$, $f'$ is are strictly increasing and $n$ is large, so if $j<N_1$, then $f'(j)<f'(N_1)<h(f'(N_1))<h(f'(N_1)+1)<h(n)$).

We claim that for such large $n$, $|I_n \cap X| \le 1$. Indeed, suppose towards a contradiction that there exist two distinct indices $j, m \ge N_1$ with $j < m$ such that both $f'(j)$ and $f'(m)$ belong to $I_n$.  
Since $j \ge N_1$, inequality (2) holds: $h(f'(j)+1) < f'(j+1)$.  
Because $f'(m) \ge f'(j+1)$ (since $m > j$ and $f'$ is strictly increasing), we have
\[
f'(m) \ge f'(j+1) > h(f'(j)+1).
\]
Now, since both $f'(j)$ and $f'(m)$ are in $I_n = [h(n), h(n+1))$, we have $h(n) \le f'(j) < h(n+1)$. Because $h$ is strictly increasing, $f'(j) \ge h(n)$ implies $f'(j)+1 \ge n+1$, and therefore
\[
h(f'(j)+1) \ge h(n+1).
\]
But this contradicts the fact that $f'(m) < h(n+1)$ (since $f'(m) \in I_n$) together with $f'(m) > h(f'(j)+1)$.  
Hence, it is impossible for two distinct elements of $\operatorname{ran}(f')$ to lie in the same interval $I_n$ for large $n$.
\end{proof}

 \begin{lemma}\label{lem:forgeq}
 For any $k >0$, we have $(\Lbf_\forall^k)^\perp\leqT\mathbf{D}^\perp$.     
\end{lemma}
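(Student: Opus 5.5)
The statement $(\Lbf_\forall^k)^\perp\leqT\Dbf^\perp$ is the same as $\Dbf\leqT\Lbf_\forall^k$. Concretely, I need maps $\Phi_-\colon[\omega]^*\to\omega^{\uparrow\omega}$ and $\Phi_+\colon\omega^{\uparrow\omega}\to\omega^\omega_{>k}$ such that $g\not\leq^*\Phi_-(X)$ implies $\neg(\Phi_+(g)\sqsubset^k_\forall X)$. Equivalently, in contrapositive form,
\[
\Phi_+(g)\sqsubset^k_\forall X\;\Rightarrow\; g\leq^*\Phi_-(X).
\]
The plan is to copy the proof of \autoref{lem:forall}, adjusting it so that $\Phi_+$ lands in $\omega^\omega_{>k}$.

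For $\Phi_+$, given $g\in\omega^{\uparrow\omega}$, I put $\Phi_+(g)(n)=g(n)+(k+1)n$. This function is strictly increasing and satisfies
\[
\Phi_+(g)(n+1)-\Phi_+(g)(n)=g(n+1)-g(n)+k+1>k,
\]
so every interval $[\Phi_+(g)(n),\Phi_+(g)(n+1))$ has more than $k$ elements, and hence $\Phi_+(g)\in\omega^\omega_{>k}$. Moreover $g\le\Phi_+(g)$ pointwise. For $\Phi_-$, given $X\in[\omega]^*$ with increasing enumeration $x_0<x_1<\cdots$, I put $\Phi_-(X)(n)=x_{(k+1)n}$, exactly the map $h_X$ from \autoref{lem:forall}. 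It is strictly increasing because $X$ is infinite. Co-infiniteness of $X$ plays no role in this direction.

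To verify the implication, write $f=\Phi_+(g)$ and assume $f\sqsubset^k_\forall X$. I then run the counting argument of \autoref{lem:forall} verbatim. Let $a_n=|\{i\colon x_i<f(n)\}|$. From $|[f(n),f(n+1))\cap X|\le k$ for $n\ge N$ we get $a_{n+1}\le a_n+k$, hence $a_n\le c+kn$ for some constant $c$. This gives
\[
f(n)\le x_{a_n}\le x_{c+kn}<x_{(k+1)n}\quad\text{for } n>\max\{c,N\}.
\]
Therefore $g\le f\le^*\Phi_-(X)$, as required.

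I do not expect a real obstacle. The only points to check are that $\Phi_+(g)$ indeed has interval lengths greater than $k$ and still dominates $g$ pointwise, and the bookkeeping inequality $f(n)\le x_{a_n}$. The latter holds because $X$ is infinite, so $x_{a_n}$ exists and is by definition the least element of $X$ that is $\ge f(n)$.
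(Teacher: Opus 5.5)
Your proof is correct, but it takes a different route from the paper's.

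\textbf{The paper's argument.} The paper works directly with the dual (``infinitely often'') formulation and argues locally. It sets $\Psi_-(X)=g_X$, a strictly increasing majorant of the gap function $f_X(n)=\min\{m>n : |[n,m)\cap X|>2k\}$. It sets $\Psi_+(h)=h'$, where $h'(0)=0$ and $h'(n+1)=k+1+h'(n)+h(h'(n))$. Take any $m\in[h'(n),h'(n+1))$ with $h(m)>g_X(m)$. Then $f_X(m)<h(h'(n+1))<h'(n+2)$, so $[h'(n),h'(n+2))$ contains more than $2k$ points of $X$. Hence one of these two consecutive intervals contains more than $k$ points. Infinitely many such $m$ give $h'\not\sqsubset^k_\forall X$.

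\textbf{Your argument.} You pass to the equivalent form $\Dbf\leqT\Lbf^k_\forall$ and reuse the global counting argument of Lemma~\ref{lem:forall}. Eventual sparsity gives the linear index bound $a_n\le c+kn$, hence $f(n)<x_{(k+1)n}$. The only new ingredient is the pointwise-larger function $g+(k+1)\id\in\omega^\omega_{>k}$.

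\textbf{What each route buys.} Your route is shorter, uses simpler maps, and shows the lemma is essentially a corollary of Lemma~\ref{lem:forall}. Indeed, for $k\ge2$ it also follows by composing Lemma~\ref{lem:forall} with Fact~\ref{fac:basL}(3). The natural witness there, $f\mapsto f((k+1)\,\cdot)$, plays the same role as your $g\mapsto g+(k+1)\id$. Your self-contained counting also does not need the restriction $k\ge2$ under which Lemma~\ref{lem:forall} is stated. The paper's local argument avoids the cumulative index estimate: a single coordinate where $h$ beats $g_X$ already produces a bad interval. It also follows the standard interval-partition template, in the style of Blass's treatment of $\Ior$. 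The cost is the more elaborate maps $h'$ and $g_X$.
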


\begin{proof}
 We explicitly define maps $\Psi_+\colon \omega^{\uparrow\omega}\to\omega^\omega_{>k} $ and $\Psi_-\colon [\omega]^*\to\omega^{\uparrow\omega}$ as follows.  For $X\in[\omega]^*$ define 
\[f_X(n)=\min\{m>n:|[n,m)\cap X|>2k\},\]
so we define $\Psi_-(X) = g_X$, where $g_X$ is a strictly increasing function that majorizes $f_X$.
A standard way to ensure a strict increase is to define $g_X$ recursively:
\[g_X(0) = f_X(0),\]
\[g_X(n+1) = \max\{f_X(n+1), g_X(n) + 1\}.\]
Since $g_X(n+1) \geq g_X(n) + 1$ for any $n\in\omega$ we have $g_X \in \omega^{\uparrow\omega}$.
Also, $f_X(n) \leq g_X(n)$ for all $n$. Now, for $h\in\omega^{\uparrow\omega}$ define 
\[h'(0)=0,  \qquad h'(n+1)=k+1+h'(n)+h(h'(n)).\]
Note that $|h'(n+1)-h'(n)|=k+1+h(h'(n))>k$, thereby, $h'\in\Dbf_k$. So put $\Psi_+=h'$. Lastly, we prove that if $g_X(\leq^*)^\perp h$ then $X\;(\sqsubset_\forall^{k})^\perp\;h'$. Suppose that $g_X(\leq^* )^\perp h$, i.e., $h\not\leq^* g_X$. 

So assume that $m\in [h'(n), h'(n+1))$ such that $h(m)>g_X(m)$. Since $g_X(m) \geq f_X(m)$, $h(m)>f_X(m)$. On the other hand, since $h$ is strictly increasing, we have $h(m)<h(h'(n+1))$, which implies that $f_X(m)<h(h'(n+1))<h'(n+2)$ (recall $h'(n+2)=k+1+h'(n+1)+h(h'(n+1))$, so $h'(n+2)>h(h'(n+1))$).  

Hence, $[m,h'(n+2))\cap X|>2k$, so $[h'(n), h'(n+1))\cap X|>k$ or $[[h'(n+1), h'(n+2))\cap X|>k$. Consequently, $h'\;\not\sqsubset_\forall^{k}\;X$.
\end{proof}

Combining \autoref{lem:forleq} and \autoref{lem:forgeq}, we obtain the following result, which establishes the first part of \autoref{mthmfive}.
 
\begin{corollary}\label{a80}
For any fixed $k >0$, we obtain $\bfrak(\Lbf_\forall^k) = \bfrak$ and 
     $\dfrak(\Lbf_\forall^k) = \dfrak$.
\end{corollary}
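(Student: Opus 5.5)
The plan is to read off both equalities from the two Tukey connections just established, using the standard transfer of cardinal invariants along the Tukey order. Recall that if $\Rbf\leqT\Rbf'$ via $(\Psi_-,\Psi_+)$, then $\bfrak(\Rbf')\le\bfrak(\Rbf)$ and $\dfrak(\Rbf)\le\dfrak(\Rbf')$. For the first, $\Psi_-$ sends an $\Rbf$-unbounded family to an $\Rbf'$-unbounded one: if $y'$ bounded $\Psi_-[F]$, then $\Psi_+(y')$ would bound $F$. For the second, $\Psi_+$ sends an $\Rbf'$-dominating family to an $\Rbf$-dominating one. We also use $\bfrak(\Rbf^\perp)=\dfrak(\Rbf)$ and $\dfrak(\Rbf^\perp)=\bfrak(\Rbf)$, and $\bfrak(\Dbf)=\bfrak$, $\dfrak(\Dbf)=\dfrak$ from \autoref{b-d}.

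First step: apply \autoref{lem:forleq}, namely $\Lbf_\forall^k\leqT\Dbf$. This gives $\bfrak=\bfrak(\Dbf)\le\bfrak(\Lbf_\forall^k)$ and $\dfrak(\Lbf_\forall^k)\le\dfrak(\Dbf)=\dfrak$.

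Second step: apply \autoref{lem:forgeq}, namely $(\Lbf_\forall^k)^\perp\leqT\Dbf^\perp$. The transfer yields $\bfrak(\Dbf^\perp)\le\bfrak((\Lbf_\forall^k)^\perp)$, which by duality says $\dfrak\le\dfrak(\Lbf_\forall^k)$. It also yields $\dfrak((\Lbf_\forall^k)^\perp)\le\dfrak(\Dbf^\perp)$, which translates to $\bfrak(\Lbf_\forall^k)\le\bfrak$. Combining the four inequalities gives both equalities.

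As a sanity check, one can alternatively obtain the upper bound on $\bfrak(\Lbf_\forall^k)$ and the lower bound on $\dfrak(\Lbf_\forall^k)$ from \autoref{fac:basL}, since $\Rbf_\forall^k\leqT\Lbf_\forall^k$, together with \autoref{mthmone}. There is no real obstacle here. The only point requiring care is getting the direction of the inequalities right when passing through the dual systems. All the combinatorial work has already been done in \autoref{lem:forleq} and \autoref{lem:forgeq}.
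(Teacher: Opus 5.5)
Your proposal is correct and matches the paper's argument: the paper also derives the corollary by combining $\Lbf_\forall^k\leqT\Dbf$ (\autoref{lem:forleq}) with $(\Lbf_\forall^k)^\perp\leqT\Dbf^\perp$ (\autoref{lem:forgeq}), and you have the direction of all four resulting inequalities right. Your alternative route to $\bfrak(\Lbf_\forall^k)\le\bfrak$ and $\dfrak\le\dfrak(\Lbf_\forall^k)$, via $\Rbf_\forall^k\leqT\Lbf_\forall^k$ from \autoref{fac:basL} together with \autoref{mthmone}, is also valid.
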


Arguing as in \autoref{a60}, we obtain the following reduction.

\begin{lemma}\label{lem:RLex0<1}
$\Lbf^0_\exists \leqT\Lbf^1_\exists$.
\end{lemma}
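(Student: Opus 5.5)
The plan is to copy the Tukey connection from \autoref{a60}, now in the restricted setting. Here $\Lbf^0_\exists=\la\omega^\omega_{>0},[\omega]^*,\sqsubset^0_\exists\ra$ with $\omega^\omega_{>0}=\omega^{\uparrow\omega}$, and $\Lbf^1_\exists=\la\omega^\omega_{>1},[\omega]^*,\sqsubset^1_\exists\ra$. I would take $\Psi_+=\id_{[\omega]^*}$, which is legitimate because both systems use $[\omega]^*$ as second coordinate. For $f\in\omega^{\uparrow\omega}$ I would set $\Psi_-(f)(n)=f(2n)$, which glues each pair of consecutive intervals of $f$ into one.

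The first step is to check that $\Psi_-$ lands in the right domain. Since $f$ is strictly increasing, $f(2n+2)\ge f(2n)+2$. Hence
\[
\bigl|[\Psi_-(f)(n),\Psi_-(f)(n+1))\bigr|=f(2n+2)-f(2n)\ge 2>1,
\]
so $\Psi_-(f)\in\omega^\omega_{>1}$. This is the only new point compared with \autoref{a60}, and it is where the restriction $|[f(n),f(n+1))|>k$ has to be respected. I expect it to be the main obstacle, though it is a mild one.

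The second step verifies the Tukey condition. Assume $\Psi_-(f)\sqsubset^1_\exists X$, i.e.\ there are infinitely many $n$ with
\[
\bigl|[f(2n),f(2n+2))\cap X\bigr|\le 1.
\]
Fix such an $n$. The interval $[f(2n),f(2n+2))$ is the disjoint union of $[f(2n),f(2n+1))$ and $[f(2n+1),f(2n+2))$, and together they contain at most one point of $X$. So at least one of these two subintervals is disjoint from $X$; call its index $m_n\in\{2n,2n+1\}$. Distinct $n$ give distinct indices $m_n$, so $|[f(m),f(m+1))\cap X|=0$ for infinitely many $m$. That is, $f\sqsubset^0_\exists X=\Psi_+(X)$, which completes the connection $\Lbf^0_\exists\leqT\Lbf^1_\exists$.
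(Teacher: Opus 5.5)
Your proposal is correct and is exactly the paper's argument: the paper just says to argue as in the proof of $\Rbf^0_\exists\leqT\Rbf^1_\exists$, i.e.\ take $\Psi_-(f)(n)=f(2n)$ and $\Psi_+$ the identity on $[\omega]^*$. Your explicit check that $f(2n+2)-f(2n)\ge 2$, so that $\Psi_-(f)\in\omega^\omega_{>1}$, is the one detail the paper leaves implicit, and you handle it correctly.
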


We next compare $\Lbf_\exists^1$ with the relation $\Rbf_\exists^1$.

\begin{lemma}\label{lem:RsL}
    $\Lbf_\exists^1\eqT \Rbf_\exists^1$.
\end{lemma}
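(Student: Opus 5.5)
The plan is to prove the two Tukey reductions separately. The first, $\Rbf_\exists^1\leqT\Lbf_\exists^1$, is the third item of \autoref{fac:basL}. The second, $\Lbf_\exists^1\leqT\Rbf_\exists^1$, I would prove by a direct connection using only that the relation $\sqsubset_\exists^1$ is monotone under shrinking the set $X$.

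\emph{The reduction $\Lbf_\exists^1\leqT\Rbf_\exists^1$.} I would take $\Psi_-\colon\omega^\omega_{>1}\to\omega^{\uparrow\omega}$ to be the inclusion map. For $\Psi_+\colon[\omega]^\omega\to[\omega]^*$, let $X=\{x_0<x_1<\cdots\}$ and set $\Psi_+(X)=\set{x_{2i}}{i\in\omega}$. This set is infinite. It is co-infinite because it misses every $x_{2i+1}$, so $\Psi_+(X)\in[\omega]^*$. Now assume $\Psi_-(f)=f\sqsubset_\exists^1 X$, that is, $\exists^\infty n\;|[f(n),f(n+1))\cap X|\le1$. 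Since $\Psi_+(X)\subseteq X$, the same infinitely many $n$ satisfy $|[f(n),f(n+1))\cap\Psi_+(X)|\le1$. Hence $f\sqsubset_\exists^1\Psi_+(X)$, which is the required Tukey condition.

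\emph{The reduction $\Rbf_\exists^1\leqT\Lbf_\exists^1$.} In case one wants this written out rather than citing \autoref{fac:basL}, the only issue is that $\Lbf$ works with $\omega^\omega_{>1}$ instead of $\omega^{\uparrow\omega}$. On the $Y$-side I would take $\Psi_+\colon[\omega]^*\to[\omega]^\omega$ to be the inclusion. On the $X$-side I would glue consecutive intervals as in \autoref{a60}: set $\Psi_-(f)(n)=f(2n)$. Each glued interval $[f(2n),f(2n+2))$ has at least two elements, so $\Psi_-(f)\in\omega^\omega_{>1}$. Suppose $\Psi_-(f)\sqsubset_\exists^1 X$. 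Then infinitely many glued intervals contain at most one point of $X$. Both halves $[f(2n),f(2n+1))$ and $[f(2n+1),f(2n+2))$ of such a glued interval then contain at most one point of $X$. Therefore $f\sqsubset_\exists^1 X$.

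Neither direction involves a real obstacle. The only points needing care are the codomain requirements: $\Psi_+(X)$ must be co-infinite in the first reduction, and $\Psi_-(f)$ must have intervals of length greater than $1$ in the second. The choices of taking every other element of $X$ and gluing pairs of intervals handle these.
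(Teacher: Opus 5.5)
Your proof is correct. The direction $\Rbf_\exists^1\leqT\Lbf_\exists^1$ is the paper's argument: you glue consecutive pairs via $\Psi_-(f)(n)=f(2n)$ and use the inclusion $[\omega]^*\subseteq[\omega]^\omega$. The paper observes that one half of a good glued interval misses $X$; your remark that each half has at most one point works equally well.

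For $\Lbf_\exists^1\leqT\Rbf_\exists^1$ your map is different.
\begin{itemize}
\item The paper sets $\Psi_+(X)=X$ when $X$ is co-infinite and takes an arbitrary value otherwise. It then argues that for $f\in\omega^\omega_{>1}$ the hypothesis $f\sqsubset_\exists^1 X$ already forces $X\in[\omega]^*$, because infinitely many intervals with at least two elements contain at most one point of $X$.
\item You replace $X$ by the thinned set $\set{x_{2i}}{i\in\omega}$, which is automatically in $[\omega]^*$, and use only that $\sqsubset_\exists^1$ is preserved when the set shrinks.
\end{itemize}
Your choice removes the case split and never uses the length restriction defining $\omega^\omega_{>1}$. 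So the inclusion/thinning pair witnesses $\Lbf_\iota^k\leqT\Rbf_\iota^k$ verbatim for every $k\geq1$ and both $\iota\in\{\forall,\exists\}$. Combined with the third item of Fact~\ref{fac:basL} (glue $k+1$ consecutive intervals), this gives $\Lbf_\iota^k\eqT\Rbf_\iota^k$ outright. Theorem~\ref{mthmfive} then follows directly from Theorem~\ref{mthmone}, without the more involved Lemmas~\ref{lem:forleq} and~\ref{lem:forgeq}.
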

\begin{proof}
The Tukey connections are straightforward. To see $\leqT$, define $\Psi_-\colon \Dbf_1\to \omega^{\uparrow\omega}$ by $\Psi_-(f)=f$ and $\Psi_+\colon [\omega]^\omega\to [\omega]^*$ by $\Psi_+(X)=X$ if $X\in [\omega]^*$, otherwise $\Psi_+(X)$ is defined to be any set from $[\omega]^*$. Assume $f\in \Dbf_1$, $X\in [\omega]^*$ are such that $\Psi_-(f)=f\sqsubset_\exists^1 X$. i.e., 
$\exists ^\infty n\;  |X\cap [f(n), f(n+1))|\leq 1$. By the definition of $\Dbf_1$, all of the intervals $[f(n), f(n+1))$ contain $\geq 2$ points, so $X\in [\omega]^*$. Thus, $\Psi_+(X)=X$ and we have $f\sqsubset_\exists^1 \Psi_+(X)$.

To prove $\succeq_\mathrm{T}$, for any $f\in \omega^{\uparrow\omega}$ define $\Psi_-(f)(n)=f(2n)$ for all $n$. Clearly, $\Psi_-(f)\in \Dbf_1$ for all $f\in\omega^{\uparrow\omega}$. Put $\Psi_+(X)=X$ for all $X\in \Dbf_1$. Assume $\Psi_-(f)\sqsubset_\exists^1 X$, i.e., $\exists^\infty n\; |[f(2n), f(2(n+1))) \cap X|\leq 1$. Then we have $[f(2n), f(2n+1))\cap X=\emptyset$ or $[f(2n+1), f(2n+2))\cap X=\emptyset$. Thus, $f\sqsubset_\exists^1 X=\Psi_+(X)$.
\end{proof}

Combining \autoref{fac:basL}\ref{fac:basLc}, \autoref{lem:RLex0<1}, and \autoref{lem:RsL}, we obtain the following consequence.

\begin{corollary}\label{lem:f1}
    $\dfrak(\Lbf_\exists^0)=\bfrak$ and $\bfrak(\Lbf_\exists^0)=\dfrak$.
\end{corollary}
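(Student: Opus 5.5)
The plan is to show that $\Lbf_\exists^0$ is Tukey equivalent to $\Rbf_\exists^1$ and then read off both cardinals from \autoref{a32}. Tukey equivalent relational systems have the same bounding and dominating numbers, so this suffices. (For $k=0$ the domain $\omega^\omega_{>0}$ is just $\omega^{\uparrow\omega}$, so $\Lbf_\exists^0=\la\omega^{\uparrow\omega},[\omega]^*,\sqsubset^0_\exists\ra$ makes sense, and the monotonicity argument of \autoref{fac:basL} applies to it as well.)

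\emph{Step 1.} I will show $\Lbf^0_\exists\eqT\Lbf^1_\exists$. The direction $\Lbf^1_\exists\leqT\Lbf^0_\exists$ is the monotonicity in \autoref{fac:basL}\ref{fac:basLc} with $k=0<\ell=1$. I check that the connection still works here, since the domains differ: take $\Psi_-$ to be the inclusion $\omega^\omega_{>1}\hookrightarrow\omega^{\uparrow\omega}$ and $\Psi_+=\id_{[\omega]^*}$. Then $f\sqsubset^0_\exists X$ implies $f\sqsubset^1_\exists X$, because $|[f(n),f(n+1))\cap X|\le 0$ implies $|[f(n),f(n+1))\cap X|\le 1$.

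The converse $\Lbf^0_\exists\leqT\Lbf^1_\exists$ is \autoref{lem:RLex0<1}. It uses the gluing map $f\mapsto\seq{f(2n)}{n\in\omega}$, which lands in $\omega^\omega_{>1}$. If one of the glued double intervals contains at most one point of $X$, then one of its two halves misses $X$. This holds for infinitely many $n$, which gives $f\sqsubset^0_\exists X$.

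\emph{Step 2.} By \autoref{lem:RsL}, $\Lbf^1_\exists\eqT\Rbf^1_\exists$. Chaining with Step 1 gives $\Lbf^0_\exists\eqT\Rbf^1_\exists$. Then \autoref{a32} with $k=1$ yields $\dfrak(\Lbf^0_\exists)=\dfrak(\Rbf^1_\exists)=\bfrak$ and $\bfrak(\Lbf^0_\exists)=\bfrak(\Rbf^1_\exists)=\dfrak$.

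There is no real obstacle. The only point needing care is that \autoref{fac:basL} is stated with the convention $k>0$, so I verify the trivial monotonicity connection at $k=0$ directly, as in Step 1. I also check that restricting to co-infinite sets $[\omega]^*$ causes no trouble: every map in the chain either keeps $[\omega]^*$ fixed or is the one already handled in \autoref{lem:RsL}.
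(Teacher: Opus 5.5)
Your proposal is correct and follows the paper's route: the paper also combines \autoref{fac:basL}\ref{fac:basLc} (giving $\Lbf^1_\exists\leqT\Lbf^0_\exists$), the gluing reduction \autoref{lem:RLex0<1}, and the equivalence $\Lbf^1_\exists\eqT\Rbf^1_\exists$ from \autoref{lem:RsL}, and then reads off both values from \autoref{a32} with $k=1$. Checking directly that the inclusion-plus-identity connection still works at $k=0$ is a harmless extra step that the paper leaves implicit.
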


\autoref{lem:f1} together with \autoref{a32} and the inequalities $\bfrak(\Lbf_\exists^0) \le \bfrak(\Lbf_\exists^k)$ and  $\dfrak(\Lbf_\exists^k) \le \dfrak(\Lbf_\exists^0)$ yields the following.

\begin{corollary}\label{a81}
For any  $k>0$, we have $\dfrak(\Lbf_\exists^k)=\bfrak$ and $\bfrak(\Lbf_\exists^k)=\dfrak$.
\end{corollary}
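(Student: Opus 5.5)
The statement to prove is \autoref{a81}: for every $k>0$, $\dfrak(\Lbf_\exists^k)=\bfrak$ and $\bfrak(\Lbf_\exists^k)=\dfrak$. I would prove it by sandwiching $\Lbf_\exists^k$ between $\Lbf_\exists^0$ and the already-computed system $\Rbf_\exists^k$, using only Tukey connections established earlier.

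For the upper bound on $\dfrak$ and the lower bound on $\bfrak$, I use \autoref{fac:basL}\ref{fac:basLc}. It gives $\Lbf_\exists^k\leqT\Lbf_\exists^0$ whenever $0<k$. One small caveat is that $\Lbf^0_\exists$ has first coordinate $\omega^\omega_{>0}=\omega^{\uparrow\omega}$ while $\Lbf^k_\exists$ uses $\omega^\omega_{>k}$. The connection is therefore the inclusion map on the first coordinate together with the identity on $[\omega]^*$. It is valid because $|[f(n),f(n+1))\cap X|\le 0$ implies $\le k$. Tukey reduction then yields $\dfrak(\Lbf_\exists^k)\le\dfrak(\Lbf_\exists^0)=\bfrak$ and $\bfrak(\Lbf_\exists^k)\ge\bfrak(\Lbf_\exists^0)=\dfrak$, by \autoref{lem:f1}.

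For the reverse inequalities, I use the third item of \autoref{fac:basL}, namely $\Rbf_\exists^k\leqT\Lbf_\exists^k$. I would check this briefly. Take $\Psi_-$ to be the inclusion $\omega^\omega_{>k}\subseteq\omega^{\uparrow\omega}$ and $\Psi_+$ the inclusion $[\omega]^*\subseteq[\omega]^\omega$; the relation is literally the same. Since the first coordinate of $\Lbf_\exists^k$ lies in the domain of $\Rbf_\exists^k$, the correct direction is actually $\Lbf^k_\exists\leqT\Rbf^k_\exists$, which gives $\dfrak(\Lbf^k_\exists)\le\dfrak(\Rbf^k_\exists)$ — the wrong direction for this step. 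The step I actually need is a reduction of something with known values into $\Lbf^k_\exists$. The natural candidate is $\Ior^\perp\leqT\Lbf_\exists^k$, mimicking \autoref{a30}.

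So I would redo the proof of \autoref{a30} inside $\Lbf$. Take $\Psi_-\colon\omega^{\uparrow\omega}\to\omega^\omega_{>k}$ by $\Psi_-(g)(n)=g((k+1)n)$, which merges blocks so that each interval has more than $k$ points. For $\Psi_+\colon[\omega]^*\to\omega^{\uparrow\omega}$, set $\Psi_+(X)(n)=e_X(n(k+1))$. If $\Psi_+(X)\sqsubseteq\Psi_-(g)$, then almost every interval of $\Psi_-(g)$ contains an interval of $\Psi_+(X)$, hence contains $k+1$ points of $X$, so $\neg(\Psi_-(g)\sqsubset^k_\exists X)$. I would also need $g\sqsubseteq^\perp$-compatibility: the relation must be about $g$ itself, not $\Psi_-(g)$. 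I would handle this by noting that $\Psi_+(X)\sqsubseteq\Psi_-(g)$ follows from $\Psi_+(X)\sqsubseteq g$, because each merged interval of $\Psi_-(g)$ contains an original $g$-interval.

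This gives $\bfrak(\Lbf_\exists^k)\le\dfrak(\Ior)=\dfrak$ and $\dfrak(\Lbf_\exists^k)\ge\bfrak(\Ior)=\bfrak$. Combined with the first paragraph, this yields the equalities. The main obstacle is getting the direction of these Tukey maps right and verifying this modified connection; the rest is bookkeeping.
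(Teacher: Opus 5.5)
Your argument is correct and is essentially the paper's. The first half, via $\Lbf^k_\exists\leqT\Lbf^0_\exists$ and \autoref{lem:f1}, is exactly the paper's step. Your connection $\Ior^\perp\leqT\Lbf^k_\exists$ is the composite of \autoref{a30} with the third item of \autoref{fac:basL}, and that item is how the paper brings in \autoref{a32}.

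Your worry that this item points the wrong way is unfounded. The reduction $\Rbf^k_\exists\leqT\Lbf^k_\exists$ is witnessed not by the inclusion, but by precisely your coarsening $f\mapsto f((k+1)\,\cdot\,)$ together with the inclusion $[\omega]^*\subseteq[\omega]^\omega$. If infinitely many merged intervals meet $X$ in at most $k$ points, then so do infinitely many of their first sub-intervals, so you could simply have cited that item.
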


The previous corollary completes the proof of \autoref{mthmfive}. We now turn to a measure-theoretic analogue of the preceding relational systems.

\begin{definition}\label{a130}
Let $\Pwf_\infty(\mathbb{R}_{\geq 0})$ denote the family of all sets with infinite Lebesgue measure on $\mathbb{R}_{\geq 0}$ and let $\varepsilon>0$.

    \begin{enumerate}[label=\rm(\alph*)]
        \item For any $f\in\omega^{\uparrow\omega}$ and $Y\in \Pwf_\infty(\mathbb{R}_{\geq 0})$ define \[f\vartriangleleft^\varepsilon_\forall Y\;\iff\; \forall^\infty n\;\mu([f(n), f(n+1))\cap Y)\leq \varepsilon,\]
        \[f\vartriangleleft^\varepsilon_\exists Y\;\iff\; \exists^\infty n\;\mu([f(n), f(n+1))\cap Y)\leq \varepsilon.\]
        \item Consider the relational system $\Mbf^\varepsilon_\iota=\la\omega^{\uparrow\omega},\Pwf_\infty(\mathbb{R}_{\geq 0}),\vartriangleleft^\varepsilon_\iota\ra$ for $\iota\in\{\forall.\exists\}$.
    \end{enumerate}
\end{definition}

We next show that the measure-theoretic systems introduced above are, from the Tukey viewpoint, equivalent to the discrete interval systems studied previously.

\begin{lemma}\label{a106}
For any $k>0$, we have $\Mbf^k_\forall \eqT \Rbf^k_\forall$.
\end{lemma}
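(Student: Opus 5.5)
The plan is to build two Tukey connections directly, taking the identity on the common domain $\omega^{\uparrow\omega}$ as $\Psi_-$ in both directions. All the work then goes into the maps $\Psi_+$ between $[\omega]^\omega$ and $\Pwf_\infty(\mathbb{R}_{\geq 0})$. The key fact is that the intervals $[f(n),f(n+1))$ have integer endpoints, so they can be compared with unit intervals and with integer-indexed blocks of $Y$.

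\emph{Step 1: $\Mbf^k_\forall\leqT\Rbf^k_\forall$.} For $X\in[\omega]^\omega$ put
\[
\Psi_+(X)=Y_X:=\bigcup_{x\in X}[x,x+1).
\]
Then $\mu(Y_X)=\infty$, since $X$ is infinite and the unit intervals are disjoint. For integers $a<b$, a unit interval $[x,x+1)$ with $x\in\omega$ lies inside $[a,b)$ exactly when $x\in[a,b)$, and is otherwise disjoint from it. Hence
\[
\mu([f(n),f(n+1))\cap Y_X)=|[f(n),f(n+1))\cap X|.
\]
So $f\sqsubset^k_\forall X$ implies $f\vartriangleleft^k_\forall Y_X$, which is exactly the required Tukey implication.

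\emph{Step 2: $\Rbf^k_\forall\leqT\Mbf^k_\forall$.} This is the main point, because a set of measure at most $k$ inside an interval can still meet many unit cells. The remedy is to pick sparse witnesses. Given $Y\in\Pwf_\infty(\mathbb{R}_{\geq 0})$, define $x_0=0$ and let $x_{j+1}$ be the least integer $m>x_j$ such that
\[
\mu\bigl(Y\cap[x_j+1,m)\bigr)>k.
\]
Such an $m$ exists: $\mu(Y\cap[0,x_j+1))\le x_j+1$ is finite while $\mu(Y)=\infty$, so $\mu(Y\cap[x_j+1,m))\to\infty$ as $m\to\infty$. Put $\Psi_+(Y)=X_Y=\{x_j\colon j\in\omega\}\in[\omega]^\omega$.

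Now take integers $a<b$ with $\mu([a,b)\cap Y)\le k$. If two elements of $X_Y$ lay in $[a,b)$, then two consecutive ones, $x_j<x_{j+1}$, would lie there. In that case $[x_j+1,x_{j+1})\subseteq[a,b)$, and this subinterval carries measure $>k$, a contradiction. Thus $|[a,b)\cap X_Y|\le 1\le k$. Applying this to the intervals $[f(n),f(n+1))$ shows that $f\vartriangleleft^k_\forall Y$ implies $f\sqsubset^k_\forall X_Y$.

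\emph{Main obstacle.} The only delicate point is the choice of $X_Y$ in Step 2. The naive choice $\{m\colon \mu(Y\cap[m,m+1))>0\}$ fails, since a set of small measure can be spread over many cells. The recursive, measure-gapped enumeration above avoids this.

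The same two maps also work verbatim for the existential relations $\sqsubset^k_\exists$ and $\vartriangleleft^k_\exists$. This is because both implications were proved interval by interval, so they transfer from "eventually" to "infinitely often" without change.
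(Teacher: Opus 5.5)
Your proposal is correct and matches the paper's proof: both directions use the identity on $\omega^{\uparrow\omega}$, with $X\mapsto\bigcup_{x\in X}[x,x+1)$ for $\Mbf^k_\forall\leqT\Rbf^k_\forall$ and a recursively built, measure-gapped subset of $\omega$ for the converse. The only difference is cosmetic: the paper requires each gap to carry measure $\geq 2$ and adds up $k$ disjoint gaps to get at most $k$ points per interval, whereas your threshold $>k$ gives at most one point per interval using only monotonicity of $\mu$.
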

\begin{proof}
$\leqT$: Define $\Psi_-=\id_{\omega^{\uparrow\omega}}$ and $\Psi_+\colon [\omega]^\omega\to \Pwf_\infty(\mathbb{R}_{\geq 0})$ by $\Psi_+(X)=\bigcup_{j\in X}[j, j+1)$.
Assume $f\sqsubset ^k_\forall X$ and let $N\in\omega$ be such that for all $n\geq N$, $|[f(n), f(n+1))\cap X|\leq k$. Fix $n\geq N$. Then \[[f(n), f(n+1))\cap \Psi_+(X)=\bigcup_{j\in [f(n), f(n+1))\cap X}[j, j+1).\]
Since $|[f(n), f(n+1))\cap X|\leq k$, we have 
\[\mu([f(n), f(n+1))\cap \Psi_+(X))\leq k\cdot 1=k.\]

$\succeq_\mathrm{T}$: Define $\Psi_-=\id_{\omega^{\uparrow\omega}}$ and $\Psi_+\colon \Pwf_\infty(\mathbb{R}_{\geq 0})\to [\omega]^\omega$ inductively as follows: for any $Y\in \Pwf_\infty(\mathbb{R}_{\geq 0})$ put $y_0=0$ and for $n\geq 1$,
\[y_j:=\min\set{m\in\omega}{\mu([y_{j-1},m)\cap Y)\geq 2}.\]
Then set $\Psi_+(Y)=\set{y_j}{j\in\omega}$.

Assume $f\vartriangleleft^k_\forall Y$ and let $N\in\omega$ be such that for all $n\geq N$ we have $\mu([f(n), f(n+1))\cap Y)\leq k$. Fix $n\geq N$. Assume, for sake of contradiction, that $|[f(n), f(n+1))\cap \Psi_+(Y)|\geq k+1$, i.e., there are at least $k+1$ points $y_{j_0}< \dotso< y_{j_k}$ of $\Psi_+(Y)$ inside $[f(n), f(n+1))$. By the definition of $\Psi_+$, we have $\mu(Y\cap [y_{j_0}, y_{j_1})) \geq 2$,  $\mu(Y\cap [y_{j_1}, y_{j_2})) \geq 2$, etc. Since each of these intervals is a subset of $[f(n), f(n+1))$ and the intervals are disjoint, we have $\mu(Y\cap[f(n), f(n+1)))\geq k\cdot 2>k$, which is a contradiction.
\end{proof} 

Proceeding as in \autoref{a106}, we arrive at the following reduction

\begin{lemma}
    $\Mbf^k_\exists \eqT \Rbf^k_\exists$.
\end{lemma}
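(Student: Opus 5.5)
We follow the two Tukey connections of \autoref{a106}, keeping both maps unchanged, and only check that each implication still holds when ``eventually'' is replaced by ``infinitely often''. Neither argument depended on the quantifier. Each was a pointwise estimate on a single interval $[f(n),f(n+1))$, saying that a bound on one side forces a bound on the other for \emph{that same} $n$. So the set of good indices $n$ transfers directly, and whether it is cofinite or merely infinite is irrelevant.

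For $\Mbf^k_\exists\leqT\Rbf^k_\exists$ we take $\Psi_-=\id_{\omega^{\uparrow\omega}}$ and $\Psi_+(X)=\bigcup_{j\in X}[j,j+1)$. This set has infinite measure because $X$ is infinite, so $\Psi_+$ lands in $\Pwf_\infty(\mathbb{R}_{\geq0})$. Suppose $f\sqsubset^k_\exists X$, and fix one of the infinitely many $n$ with $|[f(n),f(n+1))\cap X|\le k$. Since $f(n),f(n+1)$ are integers, $[f(n),f(n+1))\cap\Psi_+(X)$ is the union of the unit intervals $[j,j+1)$ for $j\in[f(n),f(n+1))\cap X$. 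Hence its measure is at most $k$, and $f\vartriangleleft^k_\exists\Psi_+(X)$.

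For $\Rbf^k_\exists\leqT\Mbf^k_\exists$ we again take $\Psi_-=\id$ and let $\Psi_+(Y)=\{y_j\colon j\in\omega\}$ with $y_0=0$ and $y_j=\min\{m\colon \mu([y_{j-1},m)\cap Y)\ge2\}$. This sequence is well defined and strictly increasing because $\mu(Y)=\infty$ and $\mu(Y\cap[a,b))<\infty$ for bounded intervals, so the set is infinite. Suppose $f\vartriangleleft^k_\exists Y$, and take one of the infinitely many $n$ with $\mu([f(n),f(n+1))\cap Y)\le k$. If $[f(n),f(n+1))$ contained $k+1$ points $y_{j_0}<\dots<y_{j_k}$, the $k$ disjoint intervals $[y_{j_i},y_{j_{i+1}})\subseteq[f(n),f(n+1))$ would each carry $Y$-measure at least $2$. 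That gives total measure $\ge 2k>k$, a contradiction. So $|[f(n),f(n+1))\cap\Psi_+(Y)|\le k$ for infinitely many $n$, which is $f\sqsubset^k_\exists\Psi_+(Y)$.

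The only step needing care is the measure bookkeeping in the second reduction. One must check that the intervals $[y_{j_i},y_{j_{i+1}})$ lie inside $[f(n),f(n+1))$; this holds since all endpoints are in that interval. One must also note that the measure-$\ge2$ property holds for consecutive indices $[y_{j-1},y_j)$, hence a fortiori for $[y_{j_i},y_{j_{i+1}})$ with $j_{i+1}>j_i$. Beyond this, nothing new is needed compared with \autoref{a106}.
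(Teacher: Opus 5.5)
Your proof is correct and takes the paper's route: the paper just says the lemma follows by proceeding as in \autoref{a106}, and you reuse both Tukey connections unchanged. These are the unit-interval thickening $X\mapsto\bigcup_{j\in X}[j,j+1)$ and the measure-$2$ discretization $Y\mapsto\{y_j\colon j\in\omega\}$, and you rightly note that each implication is a pointwise estimate in $n$, so it is unaffected by replacing $\forall^\infty$ with $\exists^\infty$. You also check details the paper leaves implicit, namely that $\Psi_+(X)\in\Pwf_\infty(\mathbb{R}_{\geq0})$ and that the $y_j$ are well defined and strictly increasing.
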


The next lemma shows that the specific value of the parameter $\varepsilon$ is irrelevant up to Tukey equivalence.

\begin{lemma}\label{a110}
 For any 
$\varepsilon, \delta > 0$,  $\Mbf^\varepsilon_\forall\eqT \Mbf^\delta_\forall$.
\end{lemma}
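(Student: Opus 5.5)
The natural plan is to rescale the real line. A linear change of scale turns a bound $\varepsilon$ into a bound $\delta$, but the functions $f\in\omega^{\uparrow\omega}$ have integer values, so the rescaling must be absorbed on the set side, with an integer correction on the function side. By symmetry it suffices to prove $\Mbf^\varepsilon_\forall\leqT\Mbf^\delta_\forall$ for arbitrary $\varepsilon,\delta>0$.

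\textbf{Case $\delta\le\varepsilon$.} Here we may take $\Psi_-=\id_{\omega^{\uparrow\omega}}$ and $\Psi_+=\id_{\Pwf_\infty(\mathbb{R}_{\geq 0})}$. Indeed, $\mu([f(n),f(n+1))\cap Y)\le\delta$ eventually implies the same with $\varepsilon$.

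\textbf{Case $\delta>\varepsilon$.} Put $c=\varepsilon/\delta<1$ and use the scaling $Y\mapsto cY$, which multiplies measures by $c$ and preserves infinite measure. Scaling alone would shrink the intervals $[f(n),f(n+1))$ to non-integer endpoints, so we also refine $f$.

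\begin{itemize}
\item Choose $p\in\omega$ with $p\ge 1/c$.
\item Define $\Psi_-(f)(n)=p\,f(n)$, which is again in $\omega^{\uparrow\omega}$.
\item Define $\Psi_+(Y)=cY\cap\mathbb{R}_{\ge0}$.
\end{itemize}

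Assume $\Psi_-(f)\vartriangleleft^\delta_\forall Y$, so eventually $\mu([pf(n),pf(n+1))\cap Y)\le\delta$. We need $\mu([f(n),f(n+1))\cap cY)\le\varepsilon$. We have
\[
[f(n),f(n+1))\cap cY=c\bigl([f(n)/c,f(n+1)/c)\cap Y\bigr),
\]
whose measure is $c\,\mu([f(n)/c,f(n+1)/c)\cap Y)$. The issue is that $[f(n)/c,f(n+1)/c)$ is not contained in $[pf(n),pf(n+1))$ unless $1/c=p$ exactly.

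The cleanest fix is to take $p=1/c$ when $\delta/\varepsilon$ is an integer. In general, we can avoid the mismatch by inflating instead of shrinking. Choose an integer $p\ge\delta/\varepsilon$, take $\Psi_+(Y)=\tfrac1p Y$ and $\Psi_-(f)=pf$. Then
\[
[f(n),f(n+1))\cap\tfrac1pY=\tfrac1p\bigl([pf(n),pf(n+1))\cap Y\bigr),
\]
with measure $\tfrac1p\mu([pf(n),pf(n+1))\cap Y)\le\delta/p\le\varepsilon$. This settles every $\delta>\varepsilon$ at once, since the endpoints $pf(n)$ are integers and the containment is exact.

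Combining the two cases yields both Tukey reductions, hence $\Mbf^\varepsilon_\forall\eqT\Mbf^\delta_\forall$. The same maps work verbatim for $\exists^\infty$.

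The main obstacle is exactly this integrality issue: the interval endpoints must remain natural numbers. It is resolved by choosing the integer factor $p$ and scaling the set $Y$ by $1/p$ instead of scaling the intervals.
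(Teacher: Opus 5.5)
Your proof is correct and is essentially the paper's argument: identity maps for the trivial direction, and for the other direction an integer factor $p\ge\delta/\varepsilon$ (the paper's $B$ with $\varepsilon B>\delta$) with $\Psi_-(f)=pf$ and $\Psi_+(Y)=\tfrac1p Y$, via the scaling property of Lebesgue measure. In a final write-up, drop the abandoned first attempt with $c=\varepsilon/\delta$ and $\Psi_+(Y)=cY$, since it does not work as stated and only obscures the correct construction.
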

\begin{proof}
Without loss of generality, assume $\varepsilon<\delta$. $\succeq_\mathrm{T}$ is trivial, so we need to prove only the $\leqT$ direction.  Let $B\in\omega$ be such that $\varepsilon\cdot B>\delta$. To define $\Psi_-\colon\omega^{\uparrow\omega}\to\omega^{\uparrow\omega}$, set $\Psi_-(f)(n)=B\cdot f(n)$ for every $n$.
To define $\Psi_+\colon \Pwf_\infty(\mathbb{R}_{\geq 0})\to\Pwf_\infty(\mathbb{R}_{\geq 0})$, first define a contraction $c\colon \mathbb{R}_{\geq 0}\to\mathbb{R}_{\geq 0}$ by $c(x)=\frac{x}{B}$ for every $x\in\mathbb{R}_{\geq 0}$. Now put $\Psi_+(Y)=c[Y]$.

Assume $B\cdot f\vartriangleleft^\delta_\forall Y$, i.e., there is $N\in\omega$ such that for every $n\geq N$ 
\[\mu([B\cdot f(n), B\cdot f(n+1))\cap Y)\leq \delta.\]
Fix $n\geq N$ and denote 
\[A_{f,Y}=[B\cdot f(n), B\cdot f(n+1))\cap Y.\]
Since the function $c$ is just a linear contraction, we can use the scaling property of Lebesgue measure (i.e., $\mu(\alpha X)=|\alpha|\mu(X)$ for nonzero $\alpha$):
\[\mu(c[A_{f,Y}])=\frac{1}{B}\mu(A_{f,Y})\leq \frac{\delta}{B}<\varepsilon.\]
Notice that by the injectivity of $c$ we have
\[c[[B\cdot f(n), B\cdot f(n+1))\cap Y]= [f(n), f(n+1))\cap c[Y],\]
so
$\mu([f(n), f(n+1))\cap c[Y])\leq \varepsilon$. Since this holds for any $n\geq N$, we get $f\vartriangleleft^\varepsilon_\forall c[Y]= \Psi_+(Y)$.
\end{proof}

The following reduction follows by the same argument as in \autoref{a110}.

\begin{lemma}
    For any 
$\varepsilon, \delta > 0$, 
$\Mbf^\varepsilon_\exists \eqT \Mbf^\delta_\exists$.
\end{lemma}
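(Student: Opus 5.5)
We mimic the proof of \autoref{a110}, replacing the universal quantifier by the existential one. Assume without loss of generality that $\varepsilon<\delta$.

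The direction $\Mbf^\delta_\exists\leqT\Mbf^\varepsilon_\exists$ is trivial: take both maps to be the identity. If $\exists^\infty n\;\mu([f(n),f(n+1))\cap Y)\le\varepsilon$, then the same infinitely many $n$ satisfy the bound $\le\delta$.

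For $\Mbf^\varepsilon_\exists\leqT\Mbf^\delta_\exists$ we reuse the scaling construction. Fix $B\in\omega$ with $\varepsilon B>\delta$ (we may take $B\ge1$). Set $\Psi_-(f)=B\cdot f$, which is again strictly increasing and hence lies in $\omega^{\uparrow\omega}$. Set $\Psi_+(Y)=c[Y]$, where $c(x)=x/B$. Since $\mu(c[Y])=\mu(Y)/B=\infty$, we have $c[Y]\in\Pwf_\infty(\mathbb{R}_{\geq 0})$.

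Now assume $B\cdot f\vartriangleleft^\delta_\exists Y$, so there is an infinite set $Z\subseteq\omega$ such that $\mu([Bf(n),Bf(n+1))\cap Y)\le\delta$ for every $n\in Z$. Fix $n\in Z$. Because $c$ is injective, it satisfies
\[
c\bigl[[Bf(n),Bf(n+1))\cap Y\bigr]=[f(n),f(n+1))\cap c[Y].
\]
By the scaling property of Lebesgue measure,
\[
\mu\bigl([f(n),f(n+1))\cap c[Y]\bigr)\le\delta/B<\varepsilon.
\]
This holds for every $n\in Z$, and $Z$ is infinite, so $f\vartriangleleft^\varepsilon_\exists \Psi_+(Y)$.

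There is no real obstacle here. The only points to check are the membership conditions: that $B\cdot f$ is strictly increasing and that $c[Y]$ still has infinite measure, both of which are immediate.
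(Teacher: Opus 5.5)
Your proof is correct and follows the paper's route exactly: the paper proves this lemma by saying it follows by the same argument as \autoref{a110}, i.e.\ identity maps for the trivial direction and the scaling maps $f\mapsto B\cdot f$, $Y\mapsto c[Y]$ for the other, with the infinite set of good indices carried over unchanged. You also check that $B\cdot f\in\omega^{\uparrow\omega}$ and $c[Y]\in\Pwf_\infty(\mathbb{R}_{\geq 0})$, which the paper leaves implicit.
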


Combining the previous two lemmas, we obtain the following characterization.

\begin{corollary}
For any $\varepsilon>0$ and $k>0$ we have $\Mbf^\varepsilon_\forall \eqT \Rbf^k_\forall$.
\end{corollary}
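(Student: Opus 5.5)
The statement is a chain of Tukey equivalences, so I would get it by composing the two preceding lemmas; no new combinatorics should be needed. Fix $\varepsilon>0$ and $k>0$.

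\textbf{Step 1.} Apply \autoref{a110} with $\delta=k$. This gives $\Mbf^\varepsilon_\forall\eqT\Mbf^k_\forall$. The lemma is stated for arbitrary positive reals, so the integer $k$ is a legitimate choice of $\delta$.

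\textbf{Step 2.} Apply \autoref{a106}, which gives $\Mbf^k_\forall\eqT\Rbf^k_\forall$.

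\textbf{Step 3.} Compose. Tukey connections compose: if $(\Psi_-,\Psi_+)\colon\Rbf\to\Rbf'$ and $(\Phi_-,\Phi_+)\colon\Rbf'\to\Rbf''$, then $(\Phi_-\circ\Psi_-,\Psi_+\circ\Phi_+)\colon\Rbf\to\Rbf''$ is again a Tukey connection. Hence $\leqT$ is transitive and $\eqT$ is an equivalence relation, and the two equivalences from Steps 1 and 2 combine to give $\Mbf^\varepsilon_\forall\eqT\Rbf^k_\forall$.

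\textbf{Main obstacle: checking \autoref{a106} as written.} The real content is hidden in \autoref{a106}, so I would verify its proof rather than just cite it. The inductive definition of $\Psi_+(Y)$ uses the threshold $\mu\geq 2$ in each step. Suppose $k+1$ points of $\Psi_+(Y)$ lie in one interval $[f(n),f(n+1))$. These points cut off $k$ consecutive subintervals, each carrying $Y$-measure at least $2$. That gives measure at least $2k>k$ inside the interval, which is the needed contradiction, so the argument holds.

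The only degenerate point is that the definition of $y_j$ takes a minimum over integers $m$. This minimum exists because $Y$ has infinite measure, so $\Psi_+(Y)$ really is an infinite subset of $\omega$.

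If one wanted to avoid \autoref{a106} altogether, there is an alternative. Use \autoref{a110} to pass to $\Mbf^1_\forall$, prove $\Mbf^1_\forall\eqT\Rbf^1_\forall$ directly by the same unit-interval and measure-$2$ constructions, and then use $\Rbf^1_\forall\eqT\Rbf^k_\forall$. That last equivalence holds because both sides are Tukey equivalent to $\Dbf$, by Vojt\'a\v{s}'s theorem together with \autoref{fac:bas} and \autoref{lem:forall}. For $k=1$, however, this route only gives equality of the cardinals, not Tukey equivalence, so I would keep the direct route of Steps 1 to 3 as the main proof.
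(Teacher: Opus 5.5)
Your proof is correct and matches the paper's argument: the corollary comes from composing \autoref{a110} (with $\delta=k$) and \autoref{a106} via transitivity of $\leqT$, and your check of the measure-$2$ construction in \autoref{a106} is sound. You can drop the closing aside, and its caveat is also inaccurate: $\Rbf^1_\forall\eqT\Dbf$ does hold at the Tukey level, since the argument of \autoref{lem:forleq} gives $\Rbf^1_\forall\leqT\Dbf$ and the proof of \autoref{lem:forall} works verbatim for $k=1$.
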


\begin{corollary}\label{a83}
$\bfrak(\Mbf^\varepsilon_\forall)=\bfrak$ and  $\dfrak(\Mbf^\varepsilon_\forall)=\dfrak$.
\end{corollary}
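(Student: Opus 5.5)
This corollary follows by chaining Tukey equivalences with the values already computed for the discrete systems. Fix $\varepsilon>0$. By the preceding corollary, $\Mbf^\varepsilon_\forall\eqT\Rbf^k_\forall$ for any $k>0$. Concretely, \autoref{a110} gives $\Mbf^\varepsilon_\forall\eqT\Mbf^1_\forall$, and \autoref{a106} with $k=1$ gives $\Mbf^1_\forall\eqT\Rbf^1_\forall$.

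Tukey equivalence preserves both associated cardinals. If $(\Psi_-,\Psi_+)\colon\Rbf\to\Rbf'$ is a Tukey connection, then:
\begin{itemize}
\item $\Psi_-$ maps $\Rbf$-unbounded families to $\Rbf'$-unbounded families, so $\bfrak(\Rbf')\le\bfrak(\Rbf)$;
\item $\Psi_+$ maps $\Rbf'$-dominating families to $\Rbf$-dominating families, so $\dfrak(\Rbf)\le\dfrak(\Rbf')$.
\end{itemize}
Applying this in both directions yields $\bfrak(\Mbf^\varepsilon_\forall)=\bfrak(\Rbf^1_\forall)$ and $\dfrak(\Mbf^\varepsilon_\forall)=\dfrak(\Rbf^1_\forall)$.

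It remains to recall that $\bfrak(\Rbf^1_\forall)=\bfrak$ and $\dfrak(\Rbf^1_\forall)=\dfrak$. This is Vojt\'a\v{s}'s characterization recalled in \autoref{s0}. Alternatively, for $k\ge2$ one can combine \autoref{fac:bas}\ref{fac:basc} with \autoref{lem:forall}, as stated in \autoref{mthmone}.

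No real obstacle is expected, since everything reduces to earlier results. The one point to check is that \autoref{a106} is applied at an integer value $k\ge1$, and that the scaling in \autoref{a110} covers both $\varepsilon<1$ and $\varepsilon>1$. The latter holds because that lemma is symmetric after the ``without loss of generality'' step.
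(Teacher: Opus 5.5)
Your proposal is correct and follows the paper's route: the paper obtains $\Mbf^\varepsilon_\forall\eqT\Rbf^k_\forall$ by combining \autoref{a110} with \autoref{a106}, then reads off $\bfrak$ and $\dfrak$ from \autoref{mthmone}, exactly as you do. Specializing to $k=1$, where Vojt\'a\v{s}'s characterization applies directly, is harmless, and your check that Tukey connections transfer unbounded and dominating families is the standard justification the paper leaves implicit.
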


Combining the $\exists$-analogues of the previous two lemmas yields the following characterization. 

\begin{corollary}
For any $\varepsilon>0$ and $k>0$ we have
$\Mbf^\varepsilon_\exists \eqT \Rbf^k_\exists$.
\end{corollary}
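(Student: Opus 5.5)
The corollary follows by composing two Tukey equivalences already established. Fix $\varepsilon>0$ and $k>0$. The first is the $\exists$-analogue of \autoref{a110}, which gives $\Mbf^\varepsilon_\exists\eqT\Mbf^k_\exists$; here we take $\delta=k$, viewing $k$ as a positive real. The second is the lemma stated right after \autoref{a106}, which gives $\Mbf^k_\exists\eqT\Rbf^k_\exists$. Since the Tukey order is transitive (Tukey connections compose coordinatewise), we conclude $\Mbf^\varepsilon_\exists\eqT\Rbf^k_\exists$.

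The only real work is to confirm that the two cited $\exists$-lemmas go through; the paper says only that they are obtained ``by the same argument.''

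\textbf{The scaling lemma.} We reuse the maps from \autoref{a110}: $\Psi_-(f)=B\cdot f$ and $\Psi_+(Y)=c[Y]$ with $c(x)=x/B$. The identity
\[
c\bigl[[Bf(n),Bf(n+1))\cap Y\bigr]=[f(n),f(n+1))\cap c[Y]
\]
together with $\mu(c[A])=\mu(A)/B$ holds for every $n$ individually. It therefore transfers ``infinitely many $n$'' just as well as ``all but finitely many $n$.'' We should also check that $c[Y]$ still has infinite measure, which is clear since $\mu(c[Y])=\mu(Y)/B$.

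\textbf{The discretization lemma.} For the direction $\Mbf^k_\exists\leqT\Rbf^k_\exists$ we use $X\mapsto\bigcup_{j\in X}[j,j+1)$, which is again an interval-wise bound. For the reverse direction we use the points $y_j$, spaced so that $Y$ has measure at least $2$ between consecutive points. The contradiction argument in \autoref{a106} is carried out for a fixed $n$, so it applies verbatim on the infinite set of $n$ where $\mu([f(n),f(n+1))\cap Y)\le k$. The main point to verify is that the $y_j$ are well defined and infinite in number. This holds because $\mu(Y\cap[y_{j-1},\infty))=\infty$, so the minimum defining each $y_j$ exists, and the resulting set lies in $[\omega]^\omega$.

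Finally, combining $\Mbf^\varepsilon_\exists\eqT\Rbf^k_\exists$ with \autoref{a32} gives the cardinal consequences $\bfrak(\Mbf^\varepsilon_\exists)=\dfrak$ and $\dfrak(\Mbf^\varepsilon_\exists)=\bfrak$, which completes \autoref{mthmsix}.
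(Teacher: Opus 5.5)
Your proposal is correct and follows the paper's own route: compose the $\exists$-analogue of \autoref{a110} (taking $\delta=k$) with the $\exists$-version of \autoref{a106}. You also supply checks the paper leaves to ``the same argument'' — that each step holds for every $n$ separately, that the $y_j$ are well defined, and that $\mu(c[Y])=\infty$. The cardinal consequence you draw, $\bfrak(\Mbf^\varepsilon_\exists)=\dfrak$ and $\dfrak(\Mbf^\varepsilon_\exists)=\bfrak$, is the right one, matching \autoref{mthmsix}.
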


\begin{corollary}\label{a116}
$\bfrak(\Mbf^\varepsilon_\exists)=\bfrak$
and
$\dfrak(\Mbf^\varepsilon_\exists)=\dfrak$.
\end{corollary}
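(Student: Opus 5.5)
The plan is to transfer everything to the discrete systems through the Tukey equivalence just obtained. By the preceding corollary, $\Mbf^\varepsilon_\exists\eqT\Rbf^k_\exists$ for any fixed $k>0$. Tukey equivalent relational systems have the same bounding and dominating numbers, since a Tukey connection $\Rbf\leqT\Rbf'$ gives $\bfrak(\Rbf')\le\bfrak(\Rbf)$ and $\dfrak(\Rbf)\le\dfrak(\Rbf')$. Hence
\[
\bfrak(\Mbf^\varepsilon_\exists)=\bfrak(\Rbf^k_\exists)\quad\text{and}\quad \dfrak(\Mbf^\varepsilon_\exists)=\dfrak(\Rbf^k_\exists).
\]
For concreteness I would take $k=1$. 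This first step is purely formal. It relies only on \autoref{a110}'s $\exists$-analogue together with the $\exists$-version of \autoref{a106}, both of which are already established.

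The second step is to read off the values of $\bfrak(\Rbf^k_\exists)$ and $\dfrak(\Rbf^k_\exists)$. These come from \autoref{a32}, which rests on \autoref{a31} ($\Rbf^1_\exists\leqT\Dbf^\perp$), \autoref{a30} ($\Ior^\perp\leqT\Rbf^k_\exists$), and Blass's computation $\bfrak(\Ior)=\bfrak$, $\dfrak(\Ior)=\dfrak$.

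The third step, which I expect to be the main obstacle, is matching what this chain produces with the statement as worded. \autoref{a32} gives $\bfrak(\Rbf^k_\exists)=\dfrak$ and $\dfrak(\Rbf^k_\exists)=\bfrak$. So the first two steps yield
\[
\bfrak(\Mbf^\varepsilon_\exists)=\dfrak\quad\text{and}\quad\dfrak(\Mbf^\varepsilon_\exists)=\bfrak.
\]
The equalities $\bfrak(\Mbf^\varepsilon_\exists)=\bfrak$ and $\dfrak(\Mbf^\varepsilon_\exists)=\dfrak$, exactly as the statement words them, therefore hold precisely when $\bfrak=\dfrak$. Since $\bfrak<\dfrak$ is consistent, the worded statement cannot be derived in $\thzfc$ from the available Tukey connections. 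The only route I see to the literal identities is under the additional hypothesis $\bfrak=\dfrak$, where they follow at once from the first two steps.

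I would therefore carry out the two Tukey steps in full. I would then state explicitly that the identification in the worded form needs $\bfrak=\dfrak$, whereas the unconditional $\thzfc$ content is the pair of equalities displayed in the third step. As a sanity check I would recompute one direction directly. Composing $\Ior^\perp\leqT\Rbf^1_\exists$ with the $\Rbf^1_\exists\to\Mbf^1_\exists$ half of the equivalence gives $\bfrak(\Mbf^\varepsilon_\exists)\le\dfrak$ outright. In a model of $\bfrak<\dfrak$ this shows that $\bfrak(\Mbf^\varepsilon_\exists)=\bfrak$ is not forced by this chain of connections.
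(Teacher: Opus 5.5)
Your argument is correct and follows the paper's own route. The $\exists$-analogues of Lemma~\ref{a106} and Lemma~\ref{a110} give $\Mbf^\varepsilon_\exists\eqT\Rbf^k_\exists$, and Corollary~\ref{a32} then yields $\bfrak(\Mbf^\varepsilon_\exists)=\dfrak$ and $\dfrak(\Mbf^\varepsilon_\exists)=\bfrak$. This is exactly what Theorem~\ref{mthmsix} records, so you are right that the printed corollary has the two values swapped.

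One correction to your sanity check. The bound $\bfrak(\Mbf^\varepsilon_\exists)\le\dfrak$ is compatible with the printed claim, so it does not refute it. What refutes it is the other half, $\Mbf^\varepsilon_\exists\leqT\Rbf^1_\exists\leqT\Dbf^\perp$ via Lemma~\ref{a31}, which gives $\bfrak(\Mbf^\varepsilon_\exists)\ge\dfrak$ and $\dfrak(\Mbf^\varepsilon_\exists)\le\bfrak$. Hence in any model of $\bfrak<\dfrak$ the literal statement is false, not merely underivable from the available connections.
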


As an immediate consequence of the previous corollaries, \autoref{mthmsix} follows. 

Finally, we consider a variant in which the admissible number of intersections increases with the interval index.

\begin{definition}
 Let $\omega^{>\id}=\set{f\in \omega^{\uparrow\omega}}{f(n+1)-f(n)>n}$.
    \begin{enumerate}[label=\rm(\alph*)]
        \item For any $f\in\omega^{>\id}$ and $X\in [\omega]^\omega$ let 
\[f\sqsubset^\id_\forall X\; \iff\; \forall^\infty n\; \bigl|[f(n), f(n+1))\cap X\bigr|\leq n,\]
\[f\sqsubset^\id_\exists X\; \iff\; \exists^\infty n\; \bigl|[f(n), f(n+1))\cap X\bigr|\leq n.\]
        \item Define the relational system $\Rbf^\id_\iota=\la \omega^{>\id}, [\omega]^\omega, \sqsubset^\id_\iota \ra$ for $\iota\in\{\forall,\exists\}$.
    \end{enumerate}
\end{definition}

The next lemma relates this variable-threshold relation to the systems previously considered and to the classical dominating relation.

\begin{lemma}\label{a120}
$\Rbf^\id_\forall \leqT \Rbf^k_\forall$ and $\Dbf\leqT \Rbf^\id_\forall$. 
\end{lemma}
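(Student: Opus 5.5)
Both reductions will be given by explicit Tukey connections, reusing the constructions of \autoref{lem:forall}.

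\textbf{First reduction, $\Rbf^\id_\forall \leqT \Rbf^k_\forall$.} Take $\Psi_+=\id_{[\omega]^\omega}$. For $\Psi_-\colon\omega^{>\id}\to\omega^{\uparrow\omega}$, the inclusion $f\mapsto f$ works: $\omega^{>\id}\subseteq\omega^{\uparrow\omega}$. Suppose $\forall^\infty n\;|[f(n),f(n+1))\cap X|\le k$. Since $k\le n$ for $n\ge k$, we also get $\forall^\infty n\;|[f(n),f(n+1))\cap X|\le n$. That is exactly $f\sqsubset^\id_\forall X$, so this direction is immediate.

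\textbf{Second reduction, $\Dbf\leqT\Rbf^\id_\forall$.} This is the substantive part. The relation now tolerates $n$ points in the $n$-th interval, so the linear bound $a_{n+1}\le a_n+k$ from \autoref{lem:forall} becomes $a_{n+1}\le a_n+n$. Summing gives $a_n\le c+n^2/2$, which is still a fixed computable bound.

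For $\Psi_-\colon\omega^{\uparrow\omega}\to\omega^{>\id}$, I cannot use the identity, since a general $f$ need not lie in $\omega^{>\id}$. I would put $\Psi_-(f)(n)=f(n)+n^2$. This has increments $f(n+1)-f(n)+2n+1>n$, so it lies in $\omega^{>\id}$, and it dominates $f$ pointwise. For $\Psi_+\colon[\omega]^\omega\to\omega^{\uparrow\omega}$, with $X=\{x_0<x_1<\cdots\}$, I set $\Psi_+(X)=h_X$ with $h_X(n)=x_{n^2}$, which is strictly increasing.

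To verify the connection, assume $g=\Psi_-(f)$ satisfies $|[g(n),g(n+1))\cap X|\le n$ for all $n\ge N$. Define $a_n=|\{i: x_i<g(n)\}|$, so that $g(n)\le x_{a_n}$. Then $a_{n+1}\le a_n+n$ for $n\ge N$, and hence $a_n\le a_N+\sum_{j=N}^{n-1}j\le c+n(n-1)/2$. For large $n$ this is below $n^2$, so
\[f(n)\le g(n)\le x_{a_n}< x_{n^2}=h_X(n).\]
Thus $f\le^* h_X$, as required.

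\textbf{Main obstacle.} The delicate point is that $\Psi_-$ must land in $\omega^{>\id}$, and this is why a naive identity fails. The quadratic shift fixes membership in $\omega^{>\id}$ while only making the domination statement easier, since we only need $f\le g\le^* h_X$. It then remains to check that the quadratic growth of $h_X$ beats the bound $c+n(n-1)/2$. I expect no further difficulty there, and this would yield the lemma.
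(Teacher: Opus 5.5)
Your proof is correct and follows the paper's argument. The first reduction uses identity maps. The second uses the same quadratic counting of points of $X$ below the $n$-th endpoint, with $\Psi_+(X)(n)=x_{n^2}$. The only difference is how the domain mismatch is handled: you compose with the explicit shift $f\mapsto f+n^2$, whereas the paper passes to $\Dbf^\id=\la\omega^{>\id},\omega^{>\id},\le^*\ra$ and asserts $\Dbf^\id\eqT\Dbf$ without proof, so your version is slightly more self-contained.
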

\begin{proof}
  The first Tukey connection is immediate, since both maps are identities. Now define $\Dbf^\id=\la \omega^{>\id}, \omega^{>\id}, \leq^* \ra$. It is easy to see that $\Dbf^\id\eqT \Dbf$, so we can work with $\Dbf^\id$ instead of $\Dbf$. Define $\Psi_-=\id_{\omega^{>\id}}$ and $\Psi_+\colon [\omega]^\omega\to \omega^{>\id}$ by $\Psi_+(X)=h$, where $h\in\omega^{>\id}$ is any function such that $\bar{x}:=\seq{x_{n^2}}{n\in\omega}\leq^* h$, i.e.,  $\forall^\infty n\; x_{n^2}\leq h(n)$. Assume $N\in\omega$ is such that $\forall n\geq N\; |[f(n), f(n+1))\cap X|\leq n$ and denote $C=|[0, f(N))\cap X|$. Note that for any $n\geq N$, there are at most $C+ (n-N)(n-1)$ points of $X$ inside $[0, f(n))$. Fix $n'>\max\{N, C\}$. Since $C+ (n'-N)(n'-1)\leq C+(n')^2-n',$ the largest possible point from $X$ inside $[0, f(n'))$  is $x_{C+(n')^2-n'}$. Since $x_{C+(n')^2-n'}<x_{(n')^2}$, necessarily $f(n')\leq x_{(n')^2}=\bar{x}(n')$. Therefore, $f\leq^* \bar{x}\leq^* h=\Psi_+(X)$.
\end{proof}

\begin{corollary}
  $\dfrak(\Rbf^\id)=\dfrak$ and $\bfrak(\Rbf^\id)=\bfrak$.   
\end{corollary}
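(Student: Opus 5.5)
The corollary follows from \autoref{a120} by composing Tukey connections and then translating back into cardinal inequalities. Here $\Rbf^\id$ means the universal system $\Rbf^\id_\forall$.

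\textbf{Step 1: the chain.} Fix any $k\ge1$. \autoref{a120} gives
\[
\Dbf\leqT\Rbf^\id_\forall\leqT\Rbf^k_\forall .
\]

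\textbf{Step 2: translate into inequalities.} Recall the standard fact: if $\Rbf\leqT\Rbf'$ via $(\Psi_-,\Psi_+)$, then $\bfrak(\Rbf')\le\bfrak(\Rbf)$ and $\dfrak(\Rbf)\le\dfrak(\Rbf')$.
\begin{itemize}
\item Images under $\Psi_-$ of $\Rbf$-unbounded sets are $\Rbf'$-unbounded. If $\Psi_-[F]$ were bounded by some $y'$, then $F$ would be bounded by $\Psi_+(y')$.
\item Images under $\Psi_+$ of $\Rbf'$-dominating sets are $\Rbf$-dominating, by the same implication read the other way.
\end{itemize}
Applying this along the chain of Step 1 gives
\[
\bfrak(\Rbf^k_\forall)\le\bfrak(\Rbf^\id_\forall)\le\bfrak(\Dbf)=\bfrak
\quad\text{and}\quad
\dfrak=\dfrak(\Dbf)\le\dfrak(\Rbf^\id_\forall)\le\dfrak(\Rbf^k_\forall).
\]

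\textbf{Step 3: close the sandwich.} \autoref{mthmone}, established via Vojt\'a\v{s}'s theorem, \autoref{fac:bas} and \autoref{lem:forall}, gives $\bfrak(\Rbf^k_\forall)=\bfrak$ and $\dfrak(\Rbf^k_\forall)=\dfrak$. The extreme terms of both chains in Step 2 therefore coincide, forcing $\bfrak(\Rbf^\id_\forall)=\bfrak$ and $\dfrak(\Rbf^\id_\forall)=\dfrak$.

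\textbf{Main obstacle.} The only nontrivial point is the first connection in \autoref{a120}, namely $\Rbf^\id_\forall\leqT\Rbf^k_\forall$ with identity maps. The two systems live on different domains: $\omega^{>\id}\subseteq\omega^{\uparrow\omega}$. So $\Psi_-$ is the inclusion and $\Psi_+=\id_{[\omega]^\omega}$.

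The required implication is that $f\sqsubset^k_\forall X$ implies $f\sqsubset^\id_\forall X$. This holds because $|[f(n),f(n+1))\cap X|\le k\le n$ for all $n\ge k$, so the eventual bound $k$ becomes an eventual bound $n$. Hence the composition in Step 1 is legitimate.

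Everything else, including $\Dbf\leqT\Rbf^\id_\forall$ via $\Dbf^\id\eqT\Dbf$, is supplied by \autoref{a120} as stated.
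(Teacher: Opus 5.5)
Your proposal is correct and is exactly the argument the paper intends: the corollary is stated without proof right after \autoref{a120}, and it follows by sandwiching $\Rbf^\id_\forall$ between $\Dbf$ and $\Rbf^k_\forall$ in the Tukey order and then using $\bfrak(\Rbf^k_\forall)=\bfrak$ and $\dfrak(\Rbf^k_\forall)=\dfrak$. You also explicitly check that $|[f(n),f(n+1))\cap X|\le k\le n$ for $n\ge k$, which justifies the ``immediate'' first connection, and reading $\Rbf^\id$ as $\Rbf^\id_\forall$ is the right interpretation.
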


 More generally, for any non-decreasing function $g\in\omega^\omega$, one may define the corresponding notions $\omega^{>g}$, $\sqsubset^g_\forall$, and $\Rbf^g_\forall$. The same argument yields $\bfrak(\Rbf_\forall^g)=\bfrak$ and $\dfrak(\Rbf_\forall^g)=\dfrak$.
 
\begin{lemma}\label{a121}
    $\Rbf^\id_\exists\eqT \Rbf^0_\exists$.
\end{lemma}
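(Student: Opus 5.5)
The plan is to prove the two Tukey reductions separately. In both, $\Psi_+$ is the identity on $[\omega]^\omega$, and only the interval side changes. This follows the gluing idea of \autoref{a60}.

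For $\Rbf^\id_\exists\leqT\Rbf^0_\exists$, take $\Psi_-\colon\omega^{>\id}\to\omega^{\uparrow\omega}$ to be the inclusion and $\Psi_+=\id_{[\omega]^\omega}$. Suppose $f\sqsubset^0_\exists X$, i.e.\ $\exists^\infty n\;[f(n),f(n+1))\cap X=\emptyset$. For each such $n$ we have $|[f(n),f(n+1))\cap X|=0\le n$, so $f\sqsubset^\id_\exists X$. This direction is immediate.

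For $\Rbf^0_\exists\leqT\Rbf^\id_\exists$, we glue blocks of $f$-intervals of growing size so that the tolerance $n$ becomes harmless. Put $t(n)=n(n+1)/2$, so that $t(n+1)-t(n)=n+1$, and define $\Psi_-(f)(n)=f(t(n))$. Thus the $n$-th interval of $\Psi_-(f)$ is
\[
[f(t(n)),f(t(n+1)))=\bigcup_{t(n)\le j<t(n+1)}[f(j),f(j+1)),
\]
a union of exactly $n+1$ consecutive $f$-intervals. Each $f$-interval is nonempty because $f$ is strictly increasing, so the glued interval has length at least $n+1>n$. Hence $\Psi_-(f)\in\omega^{>\id}$, and $\Psi_-(f)$ is clearly strictly increasing. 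Again let $\Psi_+=\id_{[\omega]^\omega}$.

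It remains to verify the Tukey condition. Assume $\Psi_-(f)\sqsubset^\id_\exists X$, so there are infinitely many $n$ with at most $n$ points of $X$ in $[f(t(n)),f(t(n+1)))$. For each such $n$, the $n+1$ disjoint $f$-intervals making up this block cannot all meet $X$. By pigeonhole, some $j_n\in[t(n),t(n+1))$ satisfies $[f(j_n),f(j_n+1))\cap X=\emptyset$. The blocks $[t(n),t(n+1))$ are pairwise disjoint, so the indices $j_n$ are distinct, and there are infinitely many empty $f$-intervals. That is, $f\sqsubset^0_\exists X=\Psi_+(X)$.

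There is no real obstacle. The only point needing care is choosing the block sizes to match the tolerance exactly: $n+1$ intervals against at most $n$ points. This one choice makes both the pigeonhole step and the membership $\Psi_-(f)\in\omega^{>\id}$ work. As a consequence, combining this lemma with \autoref{a61} gives $\bfrak(\Rbf^\id_\exists)=\dfrak$ and $\dfrak(\Rbf^\id_\exists)=\bfrak$.
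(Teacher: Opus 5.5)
Your proof is correct and follows essentially the paper's route: the inclusion/identity maps handle $\Rbf^\id_\exists\leqT\Rbf^0_\exists$, and for $\Rbf^0_\exists\leqT\Rbf^\id_\exists$ you glue $n+1$ consecutive $f$-intervals into the $n$-th new interval, take $\Psi_+=\id_{[\omega]^\omega}$, and apply pigeonhole. The only difference is that the paper glues via the recursion $h(0)=0$, $h(n+1)=f(h(n)+n+1)$ instead of your triangular indexing $f(t(n))$; your choice makes each new interval an exact union of $f$-intervals and states explicitly that the blocks are disjoint (so infinitely many distinct empty $f$-intervals arise), which the paper leaves implicit.
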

\begin{proof}
    $\Rbf^\id_\exists\leqT \Rbf^0_\exists$ is clear, so we show $\Rbf^0_\exists\leqT\Rbf^\id_\exists$. For any $f\in\omega^{\uparrow\omega}$ define $h$ recursively by $h(0)=0$, for each $n\geq 0$
    \[h(n+1)=f(h(n)+n+1),\]
    and put $\Psi_-(f)=h$. Set $\Psi_+=\id_{[\omega]^\omega}$.

    Assume $|[h(n), h(n+1))\cap X|\leq n$ for infinitely many $n\in\omega$ and fix such an $n$.
    Since $f$ is increasing, we have $h(n)<f(h(n))<f(h(n)+n+1)=h(n+1)$. Note that there are at least $n+1$ intervals constituted by $f$ inside $[h(n), h(n+1))$, namely \[[f(h(n)), f(h(n)+1)), [f(h(n)+1), f(h(n)+2)), \dotso, [f(h(n)+n)), f(h(n)+n+1)),\]
    thus, at least one of these intervals has an empty intersection with $X$.
\end{proof}

\begin{corollary}
$\dfrak(\Rbf^\id_\exists)=\bfrak$ and $\bfrak(\Rbf^\id_\exists)=\dfrak$.
\end{corollary}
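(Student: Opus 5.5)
Since $\dfrak(\Rbf^\id_\exists)$ and $\bfrak(\Rbf^\id_\exists)$ are Tukey invariants, I would derive the corollary directly from \autoref{a121} and \autoref{a61}. By \autoref{a121} we have $\Rbf^\id_\exists\eqT\Rbf^0_\exists$. Whenever $\Rbf\leqT\Rbf'$, we get $\bfrak(\Rbf')\le\bfrak(\Rbf)$ and $\dfrak(\Rbf)\le\dfrak(\Rbf')$. Applying this in both directions of the Tukey equivalence gives
\[
\bfrak(\Rbf^\id_\exists)=\bfrak(\Rbf^0_\exists)\quad\text{and}\quad \dfrak(\Rbf^\id_\exists)=\dfrak(\Rbf^0_\exists).
\]
\autoref{a61} then supplies $\dfrak(\Rbf^0_\exists)=\bfrak$ and $\bfrak(\Rbf^0_\exists)=\dfrak$, which is exactly the statement.

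There are two small points to check along the way. First, the domains differ: $\Rbf^\id_\exists$ has first coordinate $\omega^{>\id}$, while $\Rbf^0_\exists$ has $\omega^{\uparrow\omega}$. The \enquote{clear} direction $\Rbf^\id_\exists\leqT\Rbf^0_\exists$ uses the inclusion $\omega^{>\id}\subseteq\omega^{\uparrow\omega}$ together with the identity on $[\omega]^\omega$. If an interval of $f$ misses $X$ infinitely often, then infinitely often the $n$-th interval meets $X$ in $0\le n$ points, so the direction is valid. Second, the map $h$ in the proof of \autoref{a121} must actually lie in $\omega^{>\id}$. For this I note that
\[
h(n+1)-h(n)=f(h(n)+n+1)-h(n)\ge h(n)+n+1-h(n)>n,
\]
using $f(m)\ge m$.

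I expect no real obstacle here, since the argument only transfers cardinals along a Tukey equivalence. The one thing to watch is the orientation of the inequalities under $\leqT$. I would also remark that the introduction's statement of \autoref{mthmseven} has the existential values written as $\bfrak,\dfrak$, whereas the correct swapped values are the ones in this corollary.
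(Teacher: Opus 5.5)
Your proposal is correct and follows the paper's route exactly: the corollary is obtained by transferring the values of \autoref{a61} along the Tukey equivalence $\Rbf^\id_\exists\eqT\Rbf^0_\exists$ of \autoref{a121}. Your two side checks are valid: the inclusion map gives the easy direction, and $h$ lands in $\omega^{>\id}$. You are also right that the existential values in the introduction's statement of \autoref{mthmseven} should be swapped.
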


Similarly, for the relation 
$\sqsubset_\exists^g$, it can be obtained $\bfrak(\Rbf_\exists^g)=\bfrak$ and $\dfrak(\Rbf_\exists^g)=\dfrak$.

\begin{definition}
Let $\omega^\omega_\uparrow = \set{f\in \omega^\omega}{f(n+1)-f(n)\to\infty}$.  
    \begin{enumerate}[label=\rm(\arabic*)]
        \item For $f \in \omega^\omega_\uparrow$ and $X \in [\omega]^\omega$ let \[f\;\sqsubset_\forall^{\mathrm{bd}}\;X\quad\Longleftrightarrow\quad\exists k\in\omega\;\forall^\infty n\;\bigl|\,[\,f(n),f(n+1))\cap X\bigr|\le k,\]
\[f\;\sqsubset_\exists^{\mathrm{bd}}\;X\quad\Longleftrightarrow\quad\exists k\in\omega\;\exists^\infty n\;\bigl|\,[\,f(n),f(n+1))\cap X\bigr|\le k.\]
        \item Define the relational system $\Rbf_\iota^{\mathrm{bd}}=\la\omega^\omega_\uparrow,[\omega]^\omega,\sqsubset_\iota^{\mathrm{bd}}\ra$ for $\iota\in\{\forall,\exists\}$. 
    \end{enumerate}
\end{definition}

The bounded versions are naturally below the fixed-threshold systems in the Tukey order.

\begin{fact}\label{varIVa}
$\Rbf_\iota^{\mathrm{bd}}\leqT\Rbf_\iota^k$ for $\iota\in\{\forall,\exists\}$. In particular, $\dfrak(\Rbf_\iota^{\mathrm{bd}})\leq\dfrak(\Rbf_\iota^k)$ and $\bfrak(\Rbf_\iota^k)\leq \bfrak(\Rbf_\iota^{\mathrm{bd}})$.  
\end{fact}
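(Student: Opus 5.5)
The plan is to build an explicit Tukey connection $(\Psi_-,\Psi_+)\colon\Rbf_\iota^{\mathrm{bd}}\to\Rbf_\iota^k$ for a fixed $k$. Since both systems have the same second coordinate $[\omega]^\omega$, I will take $\Psi_+=\id_{[\omega]^\omega}$. The only work is to define $\Psi_-\colon\omega^\omega_\uparrow\to\omega^{\uparrow\omega}$ so that the intervals of $\Psi_-(f)$ are, up to finitely many, exactly the intervals of $f$.

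Fix $f\in\omega^\omega_\uparrow$. Because $f(n+1)-f(n)\to\infty$, there is a least $N_f$ such that $f(n+1)-f(n)\geq 1$ for all $n\geq N_f$. In particular $f$ is strictly increasing from $N_f$ on. Note that $f$ itself need not be in $\omega^{\uparrow\omega}$, since its initial segment can misbehave. To avoid having to patch values below $N_f$, which might be impossible if $f(N_f)<N_f$, I will use an index shift and put
\[
\Psi_-(f)(n)=f(n+N_f)\qquad(n\in\omega).
\]
This is strictly increasing, so $\Psi_-(f)\in\omega^{\uparrow\omega}$. Moreover, the $n$-th interval of $\Psi_-(f)$ is exactly the $(n+N_f)$-th interval $[f(n+N_f),f(n+N_f+1))$ of $f$.

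Now let $X\in[\omega]^\omega$ and suppose $\Psi_-(f)\sqsubset_\iota^k X$. For $\iota=\forall$ this says $\forall^\infty n\;\bigl|[f(n+N_f),f(n+N_f+1))\cap X\bigr|\le k$. Since a shift of the index by a fixed amount preserves ``for all but finitely many $n$'', we get $\forall^\infty n\;\bigl|[f(n),f(n+1))\cap X\bigr|\le k$. Hence $f\sqsubset_\forall^{\mathrm{bd}}X$, with $k$ itself as the witness for the existential quantifier in the definition. The case $\iota=\exists$ is identical, using that a shift also preserves ``for infinitely many $n$''. This gives $f\sqsubset_\iota^{\mathrm{bd}}\Psi_+(X)$, as required.

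The cardinal inequalities $\dfrak(\Rbf_\iota^{\mathrm{bd}})\le\dfrak(\Rbf_\iota^k)$ and $\bfrak(\Rbf_\iota^k)\le\bfrak(\Rbf_\iota^{\mathrm{bd}})$ then follow from the standard fact that $\Rbf\leqT\Rbf'$ implies $\dfrak(\Rbf)\le\dfrak(\Rbf')$ and $\bfrak(\Rbf')\le\bfrak(\Rbf)$. The only mildly delicate point is the choice of $\Psi_-$, that is, realizing that one should shift the index rather than modify finitely many values. Everything else is immediate.
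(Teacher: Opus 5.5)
Your proof is correct and is essentially the argument the paper leaves implicit by stating this as a Fact without proof. Both take $\Psi_+=\id_{[\omega]^\omega}$, take (essentially) the identity on the first coordinate, and use $k$ itself as the witness for the existential quantifier in $\sqsubset_\iota^{\mathrm{bd}}$. Your index shift $\Psi_-(f)(n)=f(n+N_f)$ is a welcome bit of extra care: members of $\omega^\omega_\uparrow$ need only be eventually strictly increasing, so they do not literally lie in $\omega^{\uparrow\omega}$. Since both $\forall^\infty$ and $\exists^\infty$ are invariant under a fixed shift, the shift costs nothing.
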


We next show that, in the existential case, the bounded relation is already strong enough to recover the behavior of $\Rbf_\exists^k$.

\begin{lemma}\label{varIVb}
    $\Rbf^k_\exists\leqT\Rbf^\mathrm{bd}_\exists$ for $k\geq 0$, in particular $\bfrak\leq \dfrak(\Rbf^\mathrm{bd}_\exists)$ and $\bfrak(\Rbf^\mathrm{bd}_\exists)\leq \dfrak$.
\end{lemma}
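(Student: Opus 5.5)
We reduce to the case $k=0$: by \autoref{fac:bas}\ref{fac:basc} we have $\Rbf^k_\exists\leqT\Rbf^0_\exists$, so it suffices to build a Tukey connection $(\Psi_-,\Psi_+)\colon\Rbf^0_\exists\to\Rbf^{\mathrm{bd}}_\exists$. The "in particular" clause then follows from \autoref{a32} (or \autoref{a61}), since $\dfrak(\Rbf^k_\exists)=\bfrak$ and $\bfrak(\Rbf^k_\exists)=\dfrak$.

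We take $\Psi_+=\id_{[\omega]^\omega}$. For $\Psi_-$, given $f\in\omega^{\uparrow\omega}$ we coarsen its interval partition by gluing together $n+1$ consecutive $f$-intervals at stage $n$, in the spirit of \autoref{a121}. Concretely, set $h(0)=0$ and
\[
h(n+1)=f(h(n)+n+1),
\]
so that $[h(n),h(n+1))$ contains the $n+1$ intervals $[f(h(n)+i),f(h(n)+i+1))$ for $i\le n$. We have $h(n)\le f(h(n))$, so the inclusion holds. The gaps satisfy $h(n+1)-h(n)\ge n+1\to\infty$, because $f$ is strictly increasing. Hence $h\in\omega^\omega_\uparrow$, and we put $\Psi_-(f)=h$.

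For the Tukey condition, assume $h\sqsubset^{\mathrm{bd}}_\exists X$. Then there is $k$ such that $\bigl|[h(n),h(n+1))\cap X\bigr|\le k$ for infinitely many $n$. For each such $n$ with $n\ge k$, the interval $[h(n),h(n+1))$ contains at least $n+1>k$ pairwise disjoint $f$-intervals. These meet at most $k$ points of $X$ in total, so one of them is disjoint from $X$. Distinct values of $n$ yield distinct $f$-intervals, since the blocks $[h(n),h(n+1))$ are disjoint. Hence $\exists^\infty m\;[f(m),f(m+1))\cap X=\emptyset$, that is, $f\sqsubset^0_\exists X$.

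The only delicate point is that the witness $k$ is not known in advance, which is why a fixed-factor gluing as in \autoref{a60} does not suffice. Letting the number of glued intervals grow with $n$ handles every $k$ at once, and it also gives the required divergence of the gaps for membership in $\omega^\omega_\uparrow$. Note also that the construction only needs $h(n)\le f(h(n))$, not the strict inequality, so the case $n=0$ causes no issue.
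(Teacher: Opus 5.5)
Your proof is correct and follows essentially the same route as the paper: coarsen $f$ by gluing a growing number of consecutive $f$-intervals, take $\Psi_+=\id$, and use pigeonhole to get infinitely many $f$-intervals missing $X$, i.e.\ $f\sqsubset^0_\exists X$, hence $f\sqsubset^k_\exists X$. The only difference is the coarsening itself: the paper simply takes $\Psi_-(f)(n)=f(n^2)$, so block $n$ is a union of $2n+1$ $f$-intervals, instead of your recursive $h$. Your write-up is also more careful than the paper's in restricting to the infinitely many good $n\ge k$ and in checking that the empty $f$-intervals found are distinct.
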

\begin{proof}
Define $\Psi_-(f)(n)=f(n^2)$ for every $n$. Observe that each interval \[[\Psi_-(f)(n), \Psi_-(f)(n+1))\] contains $(n+1)^2-n^2=2n+1$ many intervals $[f(j), f(j+1))$. Let $\Psi_+=\id_{[\omega]^\omega}$.

    Assume $\exists k\;\exists^\infty n\; |[f(n^2), f(n^2+2n+1))\cap X|\leq k$. Let $N\in\omega$ be such that $2N+1>k$. Then for every $n\geq N$, the interval $[f(n^2), f(n^2+2n+1))$ contains $\geq 2N+1>k$ intervals $[f(j), f(j+1))$, so at least one of them must have empty intersection with $X$, i.e., $f\sqsubset^0_\exists X$ and consequently $f\sqsubset^k_\exists X$. 
\end{proof}

Combining \autoref{varIVa}, \autoref{varIVb}, and \autoref{a32}, we obtain the following result, which establishes the second part of \autoref{mthmtwo}.

\begin{corollary}\label{a84}
   $\dfrak(\Rbf^\mathrm{bd}_\exists)=\bfrak$ and $\bfrak(\Rbf^\mathrm{bd}_\exists)= \dfrak$.
\end{corollary}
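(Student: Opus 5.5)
The plan is to sandwich $\Rbf^{\mathrm{bd}}_\exists$ between two Tukey reductions to the fixed-threshold system $\Rbf^k_\exists$, whose invariants are already known. Fix any $k\geq1$; by \autoref{a32} we have $\dfrak(\Rbf^k_\exists)=\bfrak$ and $\bfrak(\Rbf^k_\exists)=\dfrak$. Throughout I use the standard fact that $\Rbf\leqT\Rbf'$ implies $\dfrak(\Rbf)\leq\dfrak(\Rbf')$ and $\bfrak(\Rbf')\leq\bfrak(\Rbf)$.

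\textbf{First inequality.} I apply \autoref{varIVa} with $\iota=\exists$, which gives $\Rbf^{\mathrm{bd}}_\exists\leqT\Rbf^k_\exists$. This yields
\[
\dfrak(\Rbf^{\mathrm{bd}}_\exists)\leq\dfrak(\Rbf^k_\exists)=\bfrak
\qquad\text{and}\qquad
\dfrak=\bfrak(\Rbf^k_\exists)\leq\bfrak(\Rbf^{\mathrm{bd}}_\exists).
\]
The Tukey maps are essentially identities. The only point to watch is that the first coordinate of $\Rbf^{\mathrm{bd}}_\exists$ is $\omega^\omega_\uparrow$ rather than $\omega^{\uparrow\omega}$. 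Every $f\in\omega^\omega_\uparrow$ is eventually strictly increasing, so I replace it by a strictly increasing function that agrees with it from some point on. This does not affect any $\exists^\infty$ statement. The relation $\sqsubset^k_\exists$ then implies $\sqsubset^{\mathrm{bd}}_\exists$, witnessed by the same $k$.

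\textbf{Reverse inequality.} I apply \autoref{varIVb}, which gives $\Rbf^k_\exists\leqT\Rbf^{\mathrm{bd}}_\exists$. This yields
\[
\bfrak=\dfrak(\Rbf^k_\exists)\leq\dfrak(\Rbf^{\mathrm{bd}}_\exists)
\qquad\text{and}\qquad
\bfrak(\Rbf^{\mathrm{bd}}_\exists)\leq\bfrak(\Rbf^k_\exists)=\dfrak.
\]
The lemma's map $\Psi_-(f)(n)=f(n^2)$ lands in $\omega^\omega_\uparrow$, since the gaps $f(n^2+2n+1)-f(n^2)\geq 2n+1$ tend to infinity. The argument there shows that the image relation forces $f\sqsubset^0_\exists X$. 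Since $\sqsubset^0_\exists$ implies $\sqsubset^k_\exists$, this gives a genuine Tukey connection.

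Combining the two pairs of inequalities gives $\dfrak(\Rbf^{\mathrm{bd}}_\exists)=\bfrak$ and $\bfrak(\Rbf^{\mathrm{bd}}_\exists)=\dfrak$. There is no real obstacle here. The only care needed is the bookkeeping of domains ($\omega^\omega_\uparrow$ versus $\omega^{\uparrow\omega}$) in the two reductions, and I would address it by the eventual-modification remark above.
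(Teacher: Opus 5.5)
Your proposal is correct and is exactly the paper's argument. The paper also obtains the corollary by combining \autoref{varIVa} ($\Rbf^{\mathrm{bd}}_\exists\leqT\Rbf^k_\exists$), \autoref{varIVb} ($\Rbf^k_\exists\leqT\Rbf^{\mathrm{bd}}_\exists$) and the values from \autoref{a32}.

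Your remark on $\omega^\omega_\uparrow$ versus $\omega^{\uparrow\omega}$ makes explicit a point the paper leaves implicit. The eventual modification works because the gaps tend to infinity, so you can splice at some $m$ with $f(m)\ge m$.
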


\begin{lemma}\label{a100}
$\Dbf\leqT \Rbf^{\mathrm{bd}}_\forall$, in particular
 $\bfrak(\Rbf_\forall^{\mathrm{bd}})\leq\bfrak$   and $\dfrak\leq\dfrak(\Rbf_\forall^{\mathrm{bd}})$.
\end{lemma}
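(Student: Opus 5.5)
The plan is to build a Tukey connection $(\Psi_-,\Psi_+)\colon\Dbf\to\Rbf^{\mathrm{bd}}_\forall$ by adapting the counting argument of \autoref{lem:forall}. The new difficulty is that the bound $k$ in $\sqsubset^{\mathrm{bd}}_\forall$ is not known in advance. I will absorb this by replacing the linear sampling $x_{(k+1)n}$ of \autoref{lem:forall} with a quadratic sampling, which eventually beats every linear function $c+kn$.

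\emph{Definition of the maps.} For $f\in\omega^{\uparrow\omega}$ put $\Psi_-(f)=g_f$, where $g_f(n)=f(n)+n^2$. Then $g_f$ is strictly increasing and
\[
g_f(n+1)-g_f(n)\ge 1+2n+1\to\infty,
\]
so $g_f\in\omega^\omega_\uparrow$. We also have $f\le g_f$ pointwise. For $X\in[\omega]^\omega$ with increasing enumeration $x_0<x_1<\cdots$, put $\Psi_+(X)=h_X$, where $h_X(n)=x_{n^2}$. This $h_X$ is strictly increasing, so it lies in $\omega^{\uparrow\omega}$.

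\emph{Verification.} Assume $g_f\sqsubset^{\mathrm{bd}}_\forall X$. Fix $k$ and $N$ with $|[g_f(n),g_f(n+1))\cap X|\le k$ for all $n\ge N$. Exactly as in \autoref{lem:forall}, let
\[
a_n=|\set{i\in\omega}{x_i<g_f(n)}|.
\]
Then $g_f(n)\le x_{a_n}$, and $a_{n+1}\le a_n+k$ for $n\ge N$. Hence $a_n\le c+kn$ for all $n\ge N$, where $c=a_N-kN$. Since $n^2>c+kn$ for all sufficiently large $n$, we obtain, for large $n$,
\[
f(n)\le g_f(n)\le x_{a_n}\le x_{c+kn}<x_{n^2}=h_X(n).
\]
Thus $f\le^* h_X=\Psi_+(X)$, as required.

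The only point needing care is that the constants $k$ and $c$ depend on the pair $(f,X)$, while the maps must be defined uniformly. The quadratic index $n^2$ in $h_X$ handles this, because it dominates every linear function eventually. The "in particular" clause then follows from the general fact that $\Rbf\leqT\Rbf'$ implies $\bfrak(\Rbf')\le\bfrak(\Rbf)$ and $\dfrak(\Rbf)\le\dfrak(\Rbf')$, together with \autoref{b-d}.
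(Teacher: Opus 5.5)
Your proof is correct. It shares the paper's $\Psi_+(X)(n)=x_{n^2}$ and the ``quadratic beats linear'' counting, and differs only in the choice of $\Psi_-$.

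\textbf{The paper's route.} The paper codes the input $g$ recursively by $f(n+1)=g(f(n))+n+1$. It then extracts the bound from the subinterval $[f(n),g(f(n)))\subseteq[f(n),f(n+1))$. Since that subinterval contains at most $k$ points of $X$, one gets $g(n)\le g(f(n))\le x_{m_n+k}$, where $m_n$ grows linearly in $n$.

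\textbf{Your route.} You take $g_f(n)=f(n)+n^2$, which lies pointwise above its argument and has gaps tending to infinity. The counting argument of \autoref{lem:forall} then applies verbatim to $g_f$ itself:
\[
f(n)\le g_f(n)\le x_{a_n}\le x_{c+kn}<x_{n^2}\quad\text{for large } n.
\]

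\textbf{Comparison.} Your version is shorter. It also makes clear that here $\Psi_-$ only has to land in $\omega^\omega_\uparrow$ while staying above its input. The paper's composition coding, which reads the domination off the inner point $g(f(n))$ rather than the endpoint $g_f(n)$, is not needed for this lemma and adds nothing beyond an alternative way of locating the witness point.
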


\begin{proof}
We define the maps $\Phi_-$ and $\Phi_+$ as follows. For any $g \in \omega^{\uparrow\omega}$, let $f = \Phi_-(g)$ be defined recursively:
\begin{itemize}
    \item  $f(0) = g(0)$,
    \item  $f(n+1) = g(f(n)) + n + 1$.
\end{itemize}

Since $g$ is strictly increasing, $g(f(n)) \ge f(n) + 1$. Thus, $f(n+1) - f(n) = g(f(n)) - f(n) + n + 1 \ge n + 2$. As $n \to \infty$, $(f(n+1) - f(n)) \to \infty$, so $f \in\omega^\omega_\uparrow$. Additionally, $f(n+1) > f(n)$, so $f$ is strictly increasing and $f(n) \ge n$ for all $n$.

For any $X \in [\omega]^\omega$, let $X = \{x_0, x_1, x_2, \dots\}$ be its increasing enumeration. We define $h = \Phi_+(X)$ by
\[ h(n) = x_{n^2}.\]

We must show that if $\Phi_-(g) \sqsubset_\forall^{\mathrm{bd}} X$, then $g \leq^* \Phi_+(X)$.

Assume $f \sqsubset_\forall^{\mathrm{bd}} X$. By the definition, there exist $k \in \omega$ and $N \in \omega$ such that for all $i \ge N$,
\[| [f(i), f(i+1)) \cap X | \le k.\]

For each $n\in\omega$, let $m_n$ be the index such that $x_{m_n}$ is the smallest element of $X$ with $x_{m_n} \ge f(n)$. For any $n > N$, the elements of $X$ below $f(n)$ consist of those below $f(N)$ and those in the intervals $[f(i), f(i+1))$ for $i = N, \dots, n-1$. Therefore
\[m_n \le |X \cap [0, f(N))| + k \cdot (n - N).\]
This shows that $m_n$ is bounded by a linear function of $n$, say $m_n \le A \cdot n + B$.

By the definition of $f$, we have $f(n+1) > g(f(n))$. Since $| [f(n), f(n+1)) \cap X | \le k$, the subinterval $[f(n), g(f(n)))$ contains at most $k$ points of $X$. 

The point $x_{m_n}$ is the first point of $X$ such that $x_{m_n} \ge f(n)$. Among the set of $k+1$ consecutive points $\{x_{m_n}, x_{m_n+1}, \dots, x_{m_n+k}\}$, at least one must fall outside the interval $[f(n), g(f(n)))$. Therefore, the largest of these points must satisfy
\[x_{m_n+k} \ge g(f(n)).\]

Since $f(n) \ge n$ and $g$ is strictly increasing, we have
\[g(n) \le g(f(n)) \le x_{m_n+k}.\]

Since $m_n$ grows linearly ($m_n \le An+B$), there exists an $N_0$ such that for all $n \ge N_0$, $n^2 \ge m_n + k$. Thus,
\[g(n) \le x_{n^2} = h(n).\]
This implies $g \leq^* h$, completing the Tukey reduction $\mathbf{D} \leqT \Rbf_\forall^{\mathrm{bd}}$   
\end{proof}

Combining \autoref{varIVa} together with \autoref{mthmone} and \autoref{a100}, we obtain the following result

\begin{corollary}\label{a101}
 $\dfrak(\Rbf_\forall^{\mathrm{bd}})=\dfrak$ and $\bfrak=\bfrak(\Rbf_\forall^{\mathrm{bd}})$.   
\end{corollary}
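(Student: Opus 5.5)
The corollary asks for both $\dfrak(\Rbf_\forall^{\mathrm{bd}})=\dfrak$ and $\bfrak(\Rbf_\forall^{\mathrm{bd}})=\bfrak$. I would obtain each equality by sandwiching the bounded system between $\Dbf$ and $\Rbf_\forall^k$ in the Tukey order, using results already established.

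\emph{Upper half.} \autoref{varIVa}, with $\iota=\forall$ and any fixed $k\ge1$, gives $\Rbf_\forall^{\mathrm{bd}}\leqT\Rbf_\forall^k$. Hence
\[
\dfrak(\Rbf_\forall^{\mathrm{bd}})\le\dfrak(\Rbf_\forall^k)
\quad\text{and}\quad
\bfrak(\Rbf_\forall^k)\le\bfrak(\Rbf_\forall^{\mathrm{bd}}).
\]
By \autoref{mthmone} we have $\dfrak(\Rbf_\forall^k)=\dfrak$ and $\bfrak(\Rbf_\forall^k)=\bfrak$, so this yields $\dfrak(\Rbf_\forall^{\mathrm{bd}})\le\dfrak$ and $\bfrak\le\bfrak(\Rbf_\forall^{\mathrm{bd}})$.

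Since the domain here is $\omega^\omega_\uparrow$ rather than $\omega^{\uparrow\omega}$, I would check the Tukey map behind \autoref{varIVa}. Take $\Psi_-$ to be the inclusion and $\Psi_+$ the identity on $[\omega]^\omega$. If $f\sqsubset_\forall^kX$, then $k$ itself witnesses $f\sqsubset_\forall^{\mathrm{bd}}X$, so the connection works. The only care needed is that $\omega^\omega_\uparrow\subseteq\omega^{\uparrow\omega}$. This holds because eventually increasing gaps force $f$ to be eventually strictly increasing, and finitely many initial values can be modified without affecting any $\forall^\infty$ statement.

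\emph{Lower half.} \autoref{a100} gives $\Dbf\leqT\Rbf_\forall^{\mathrm{bd}}$. Together with \autoref{b-d}, this yields $\bfrak(\Rbf_\forall^{\mathrm{bd}})\le\bfrak$ and $\dfrak\le\dfrak(\Rbf_\forall^{\mathrm{bd}})$.

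Combining the two halves gives both equalities. No real obstacle remains, since the substantive work was done in \autoref{a100}; the only point needing attention is the domain compatibility in the first step.
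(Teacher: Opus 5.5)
Your proposal is correct and follows the paper's own argument: \autoref{varIVa} together with \autoref{mthmone} gives $\dfrak(\Rbf_\forall^{\mathrm{bd}})\le\dfrak$ and $\bfrak\le\bfrak(\Rbf_\forall^{\mathrm{bd}})$, and \autoref{a100} gives the reverse inequalities. Your extra domain check is a detail the paper leaves implicit. The inclusion $\omega^\omega_\uparrow\subseteq\omega^{\uparrow\omega}$ fails literally, since finitely many initial values may be out of order, so $\Psi_-$ should be the finite modification you describe rather than the inclusion. To make that modification work, splice at some $N$ from which $f$ is strictly increasing and $f(N)\ge N$; such an $N$ exists because the gaps diverge.
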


This completes \autoref{mthmtwo}.

\section{Open questions}

Motivated by \autoref{a130}, it is natural to consider stronger asymptotic forms of interval avoidance and to ask whether they still recover the classical bounding and dominating numbers. In this direction, we introduce the following variants of the relation $\vartriangleleft^\varepsilon_\forall$.

\begin{enumerate}
    \item Let 
$\vec{\varepsilon} = \langle \varepsilon_n : n < \omega \rangle$ be a sequence of positive real numbers such that 
$\lim_{n \to \infty} \varepsilon_n = 0$. We define
\[f \vartriangleleft^{\vec{\varepsilon}}_\forall Y\; \iff\; \forall^\infty n \in \omega \left( \mu([f(n), f(n+1)) \cap Y) \leq \varepsilon_n \right).\]
    \item We say that 
$f$ strongly captures 
$Y$ if the total intersection across all intervals is finite
\[f \vartriangleleft^{sum} Y\; \iff\; \sum_{n=0}^{\infty} \mu([f(n), f(n+1)) \cap Y) < \infty.\]
\end{enumerate}
These relations form a natural hierarchy of increasing strength
\[f \vartriangleleft^{sum} Y \implies f \vartriangleleft^{\vec{\varepsilon}}_\forall Y \implies f \vartriangleleft^\varepsilon_\forall Y.\]
Consequently, the associated cardinal invariants satisfy $\mathfrak{b}(\Mbf^\varepsilon_\forall)\leq\mathfrak{b}(\Mbf^{sum})$ and $\mathfrak{d}(\Mbf^{sum})\leq\mathfrak{d}(\Mbf^\varepsilon_\forall)$.

This naturally leads to the following problem, which asks whether the summable strengthening still yields the classical characteristics.

\begin{question}
Is $\mathfrak{b}(\Mbf^{sum}) \leq \mathfrak{b}$ and $\mathfrak{d}\leq\mathfrak{d}(\Mbf^{sum})$ provable in $\thzfc$?
\end{question}

%\printbibliography

{\small
\bibliography{bd}
\bibliographystyle{alpha}
}

%\Addresses

\end{document}